\documentclass{article}

\usepackage{amssymb,amsthm,amsmath}

\title{Permanental Point Processes on Real Tori, Theta Functions and Monge-Amp\`ere Equations}
\author{Jakob Hultgren}

\newtheorem{theorem}{Theorem}
\newtheorem{lemma}{Lemma}
\newtheorem{proposition}{Proposition}
\newtheorem{corollary}{Corollary}
\newtheorem{definition}{Definition}\theoremstyle{definition}
\newtheorem{remark}{Remark}\theoremstyle{remark}
\theoremstyle{definition}

\providecommand{\norm}[1]{\lVert#1\rVert}
\providecommand{\bracket}[2]{\left\langle #1,#2\right\rangle}

\newcommand{\Z}{\mathbb{Z}}
\newcommand{\R}{\mathbb{R}}

\newcommand{\C}{\mathbb{C}}
\newcommand{\Q}{\mathbb{Q}}
\newcommand{\N}{\mathbb{N}}

\newcommand{\M}{\mathcal{M}}

\renewcommand{\P}{\mathcal{P}}

\newcommand{\dom}{\operatorname{dom}}

\newcommand\MA{\operatorname{MA}}
\newcommand\Ric{\operatorname{Ric}}

\newcommand\perm{\operatorname{perm}}

\begin{document}
\maketitle
\begin{abstract}
Inspired by constructions in complex geometry we introduce a thermodynamic framework for Monge-Amp\`ere equations on real tori. We show convergence in law of the associated point processes and explain connections to complex Monge-Amp\`ere equations and optimal transport. 
\end{abstract}

\setcounter{tocdepth}{1}
\tableofcontents

\section{Introduction}\label{SectIntro}
In a series of papers culminating in \cite{BermanBiRat} Berman introduce a thermodynamic framework for complex Monge-Amp\`ere equations. In particular, he shows how the Monge-Amp\`ere measures of solutions to complex Monge-Amp\`ere equations can be seen as limits of canonically defined ($\beta$-deformations of) determinantal point processes. Inspired by this we will introduce a thermodynamic framework for real Monge-Amp\`ere equations on the real torus $X=\R^n/\Z^n$. Using certain families of functions analogous to theta functions on Abelian varieties we construct permanental point processes on $X$. Our first result is that, as long as the Monge-Amp\`ere equation admits a unique solution, the point processes defined by the statistic mechanical framework converges in law towards the Monge-Amp\`ere measure of this unique solution. Equivalently, and in the language of thermodynamics, under absence of first order phase transitions the microscopic setting admits a macroscopic limit that is determined by the Monge-Amp\`ere equation.  

The real torus should be seen as one of several settings where strong connections between complex geometry, real Monge-Amp\`ere equations and optimal transport are manifested (the related case of toric manifolds is treated in \cite{Berman}). We will exploit these connections to produce semi-explicit approximations of optimal transport maps on $X$ (see Corollary~\ref{CorrOptTrans}). As such, this work ties in with the seminal works by McCann \cite{McCann} and Cordero-Erasquin \cite{Cordero} on optimal transport on Riemannian manifolds.  

Moreover, motivated by the difficult problem of singular K\"ahler-Einstein metrics of (almost everywhere) positive curvature on complex varieties we propose a corresponding real Monge-Amp\`ere equation on $X$ (see equation \eqref{MAEqSpec} below). The assumption of no first order phase transition always holds for positive temperature. However, a reflection of the fact that the related complex geometric problem is one of \emph{positive} curvature is that the statistical mechanic setting for \eqref{MAEqSpec} is of \emph{negative} temperature. As a second result, by proving a uniqueness theorem for Monge-Amp\`ere equations of independent interest (see Theorem~\ref{ThmFUniq}), we rule out first order phase transitions down to the critical temperature of $-1$. In a future paper we hope to address the question of uniqueness for temperatures smaller than $-1$, which might be seen as the analog of the problem studied in \cite{LinWang}.


\subsection{Setup}
Let $dx$ be the standard volume measure on $X$ induced from $\R^n$. Let $\beta$ be a real constant and $\mu_0$ a probability measure on $X$, absolutely continuous and with smooth, strictly positive density with respect to $dx$. Given the data $(\mu_0,\beta)$ we will consider the real Monge-Amp\`ere equation on $X$ given by
\begin{equation} \MA(\phi) = e^{\beta \phi}d\mu_0. \label{MAEqGen}\end{equation}
Here $\MA$ is the Monge-Amp\`ere operator defined by
\begin{equation} \phi\mapsto \det(\phi_{ij}+\delta_{ij}) dx. \label{MAOp} \end{equation}
where $(\phi_{ij})$ is the Hessian of $\phi$ with respect to the coordinates on $X$ induced from $\R^n$ and 
$\delta_{ij}$ is the Kronecker delta. As usual we demand of a solution $\phi:X\rightarrow \R$ that it is twice differentiable and quasi-convex in the sense that $(\phi_{ij}+\delta_{ij})$ is a positive definite matrix.

We will pay specific attention to the case when $\mu_0$ is chosen as the measure
$$ \gamma = \sum_{m\in \Z^n} e^{-|x-m|^2/2}dx. $$
We get the equation
\begin{equation} \MA(\phi) = e^{\beta \phi}\gamma. \label{MAEqSpec} \end{equation}
As mentioned above this equation has an interpretation in terms of complex geometry. For $\beta=-1$, \eqref{MAEqSpec} arises as the "push forward" of a twisted K\"ahler-Einstein equation on the Abelian variety $\C^n/4\pi\Z^n+i\Z^n$. A more detailed exposition of this relation will follow in Section \ref{SectionEqPush}. 


\subsection{Construction of the Point Processes}\label{SectPointProc}
The point processes we will study arise as the so called "$\beta$-deformations" of certain \emph{permanental point processes} (see \cite{HoughEtAl} for a survey). Let's first recall the general setup of a permanental point process with $N$ particles. We begin by fixing a set of $N$ \emph{wave functions} on $X$
$$ S^{(N)} = \{\Psi^{(N)}_1, \ldots, \Psi^{(N)}_N\}. $$
This defines a matrix valued function on $X^N$ 
$$ (x_1,\ldots,x_N)\rightarrow (\Psi_i(x_j)). $$
Recall that the permanent of a matrix $A=a_{ij}$ is the quantity
$$ \sum_{\sigma}\prod_i a_{i, \sigma(i)} $$
where the sum is taken over all permutations of the set $\{1\ldots, N\}$. Together with the background measure $\mu_0$ this defines a symmetric probability measure on $X^N$
\begin{equation} \perm(\Psi^{(N)}_i(x_j)) d\mu_0^{\otimes N}/Z_N, \label{EqPerm} \end{equation}
where $Z_N$ is a constant ensuring the total mass is one. This is a \emph{pure} permanental point process. We will define, for each $k\in \N$, a set of $N=N_k$ wave functions and, for a given $\beta\in \R$, study the so called $\beta$-deformations of \eqref{EqPerm} 
\begin{equation} \mu_{\beta}^{(N)} = \left(\perm(\Psi_i(x_j)\right)^{\beta/k} d\mu_0^{\otimes N}/Z_{\beta,N} \label{EqBetaPerm} \end{equation}
where, as above, $Z_{\beta,N}$ is a constant ensuring the total mass is one. We will now define the sets of wave functions. Note that $\mu_\beta^{(N)}$ does not depend on the order of the element in $S^{(N)}$. For each positive integer $k$, let
$$ S^{(N)} = \{\Psi_p^{(N)}: p\in \frac{1}{k}\Z^n/\Z^n \} $$
where 
$$ \Psi_p^{(N)}(x) = \sum_{m\in \Z^n+p} e^{-k|x-m|^2/2}dx. $$

Before we move on we should make a comment on the notation. We get $N=N_k=k^n$. Throughout the text, in formulas where both $N$ and $k$ occur, the relation $N=k^n$ will always be assumed. 

Finally, we will make two remarks on the definitions. In \cite{NegeleOrland} permanental point processes are used to model a bosonian many particle system in quantum mechanics. In that interpretation $\Psi_i^{(N)}$ defines a 1-particle wave function and the permanent above is the corresponding $N$-particle wave function defined by $\Psi_{1}^{(N)},\ldots, \Psi_{N}^{(N)}$. Secondly, we will explain in Section \ref{SectionDetPerm} how the wave functions arises as the "push forward" of $\theta$-functions on $\C^n/(4\pi\Z^n+i\Z^n)$.


\subsection{Main Results}
Denote the space of probability measures on $X$ by $\mathcal{M}_1(X)$ and consider the map $\delta^{(N)}:X^N\rightarrow \mathcal{M}_1(X)$
$$ \delta^{(N)}(x) = \delta^{(N)}(x_1,\ldots, x_N) = \frac{1}{N}\sum_{i=1}^N \delta_{x_i}. $$
Let $x=(x_1,\ldots, x_N)\in X^N$ be the random variable with law $\mu_{\beta}^{(N)}$. Its image under $\delta^{(N)}$, $\delta^{(N)}(x)$, is the \emph{empirical measure}. This is a random measure with law given by the push-forward measure
\begin{equation} \Gamma_\beta^{(N)} = \left(\delta^{(N)}\right)_* \mu^{(N)}_ {\beta} \in \mathcal{M}_1(\mathcal{M}_1(X)) \label{PFMeasures} \end{equation}
Our results concern the weak* limit of $\Gamma_\beta^{(N)}$ as $N\rightarrow \infty$. In particular we will show, in some cases, that the limit is a dirac measure concentrated at a certain $\mu_*\in \M_1(X)$ related to \eqref{MAEqGen} or \eqref{MAEqSpec}. Loosely speaking, this means $\mu_*$ can be approximated by sampling larger and larger point sets on $X$ according to $\mu_\beta^{(N)}$. 

\begin{theorem}\label{MainThmGen}
Let $\mu_0\in\M_1(X)$ be absolutely continuous and have smooth, strictly positive density with respect to $dx$. Let $\Gamma^{(N)}$ be defined as above and let $\beta\in \R$.  
Assume also that \eqref{MAEqGen} admits a unique solution, $\phi_*$.
Then  
\begin{equation} \Gamma^{(N)}_\beta \rightarrow \delta_{\mu_*} \label{EqMainThmGen}\end{equation}
in the weak* topology of $\M_1(\M_1(X))$, where $\mu_* = MA(\phi_*)$. 
\end{theorem}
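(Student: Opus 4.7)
The plan is to establish a large deviation principle (LDP) for the empirical measures $\delta^{(N)}(x)$ under $\mu_\beta^{(N)}$, with some rate function $F_\beta : \M_1(X) \to \R\cup\{+\infty\}$ whose minimizers correspond precisely to solutions of \eqref{MAEqGen}. Under the uniqueness hypothesis, the minimizer $\mu_*$ is unique, and \eqref{EqMainThmGen} then follows from the standard soft argument that the LDP upper bound implies $\Gamma_\beta^{(N)}(U^c) \to 0$ exponentially for any weak$^*$ neighborhood $U$ of $\mu_*$.

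The rate function should decompose into an energy contribution (from the permanental Boltzmann weight) and an entropy contribution (from the product background $\mu_0^{\otimes N}$ via Sanov's theorem). The entropy part yields the relative entropy $I(\mu|\mu_0)$. The energy part requires sharp asymptotics of the log-permanent. The wave functions $\Psi_p^{(N)}$ are Gaussian sums of width $1/\sqrt{k}$ localised at the points of $\tfrac{1}{k}\Z^n/\Z^n$, so $\tfrac{1}{k}\log \Psi_p^{(N)}(x) \to -\tfrac{1}{2}d(x,p)^2$ uniformly, where $d$ denotes the torus distance. The permanent is a sum over $N!$ permutations $\sigma$ of $\exp\bigl(-\tfrac{k}{2}\sum_p d(x_{\sigma(p)}, p)^2 + o(k)\bigr)$, and since $\log N! = O(N\log N) = o(kN)$, a Laplace-type analysis shows that the dominant contribution comes from the optimal assignment, giving
\[
-\frac{1}{kN}\log\perm\bigl(\Psi_p^{(N)}(x_q)\bigr) \;\longrightarrow\; \tfrac{1}{2}\, W_2^2(\mu, dx)
\]
whenever $\delta^{(N)}(x) \to \mu$, where $W_2$ is the $L^2$-Wasserstein distance on $X$ and we have used that the uniform measure on $\tfrac{1}{k}\Z^n/\Z^n$ converges weakly$^*$ to $dx$. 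Combining these ingredients gives, up to an additive constant from $\log Z_{\beta,N}$,
\[
F_\beta(\mu) \;=\; \tfrac{\beta}{2}\, W_2^2(\mu, dx) \;+\; I(\mu|\mu_0).
\]

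To identify minimizers of $F_\beta$ with solutions of \eqref{MAEqGen}, I would appeal to Kantorovich duality together with the torus analogue of Brenier's theorem (McCann, Cordero-Erausquin). The functional derivative of $\tfrac{1}{2}W_2^2(\,\cdot\,,dx)$ at $\mu$ is the Kantorovich potential $\phi$; the map $x \mapsto x + \nabla\phi(x)$ realises the transport from $dx$ to $\mu$, and the Jacobian identity $\det(\delta_{ij}+\phi_{ij})\,dx = \mu$ is precisely $\MA(\phi) = \mu$. Setting the first variation of $F_\beta$ to zero then gives $d\mu/d\mu_0 \propto e^{-\beta\phi}$, and after absorbing the normalising constant into $\phi$ one recovers exactly \eqref{MAEqGen}. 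The main technical obstacle is the permanent asymptotics, since the number of permutations and the Gaussian concentration parameter diverge simultaneously; one needs uniform control ensuring that near-optimal assignments contribute only $o(kN)$ on the exponential scale. A secondary difficulty is that for $\beta<0$ the Boltzmann factor $\perm^{\beta/k}$ is unbounded, so the LDP upper bound and lower semicontinuity of $F_\beta$ require some care, typically via truncation or regularisation of the wave functions.
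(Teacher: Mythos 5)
Your overall plan matches the paper's: establish a large deviation principle for the empirical measures with rate function $G(\mu)=\beta W^2(\mu,dx)+Ent_{\mu_0}(\mu)+C$, identify the minimizers of $G$ with Monge--Amp\`ere measures of solutions to \eqref{MAEqGen} through the Kantorovich-dual variational formulation, and invoke uniqueness of the solution to conclude $\Gamma^{(N)}_\beta\to\delta_{\mu_*}$. Three remarks on where your plan diverges from the paper or is incomplete.

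First, the permanent asymptotics. You propose a direct Laplace analysis of $\log\perm$ and flag the simultaneous divergence of $N!$ and the Gaussian scale as the main technical obstacle. The paper does something structurally different that sidesteps this. It first proves a \emph{zero-temperature} LDP (Theorem~\ref{ThmBetaInf}) via the G\"artner--Ellis theorem; the crucial observation is that the moment generating function of the empirical measure \emph{factors},
\begin{equation*}
Z_{\Gamma^{(N)}}(kN\phi) = N!\prod_{p\in\frac{1}{k}\Z^n/\Z^n}\int_X e^{k(-c^{(N)}_p+\phi)}\,d\mu_0,
\end{equation*}
so computing $\lim\frac{1}{kN}\log Z$ reduces to a one-particle Laplace estimate (Lemma~\ref{LemmaHNXi}), and the Legendre transform of the limit is $W^2(\cdot,dx)$ by Kantorovich duality. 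The LDP for arbitrary $\beta$ is then deduced through a Gibbs-measure reduction (Theorem~\ref{ThmRedToZeroTemp}), whose hypotheses are equicontinuity and uniform boundedness of the normalized Hamiltonians, supplied by the $(-1)$-convexity of $-\frac1k\log\Psi_p^{(N)}$ and the unit Lipschitz bound on $c$-convex functions (Corollary~\ref{CorHNLip}). The combinatorial difficulty you identify is thus never faced directly; if you insist on a frontal attack you would in any case need these same equicontinuity estimates to get uniformity of the convergence in $x$.

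Second, a misplaced concern: for $\beta<0$ the factor $\perm^{\beta/k}$ is \emph{not} unbounded and no truncation is needed. Each $\Psi_p^{(N)}$ is a sum of positive Gaussians, bounded below by its single largest term, so $-\frac1k\log\Psi_p^{(N)}$ is uniformly bounded by $\mathrm{diam}(X)^2/2$; combined with $\frac{\log N!}{kN}\to 0$ (since $N=k^n$) this makes $\frac{1}{kN}\log\perm$ uniformly bounded, which is exactly what is checked at the end of the proof of Theorem~\ref{MainThmLDP}.

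Third, a genuine gap. Passing from ``\eqref{MAEqGen} admits a unique solution'' to ``$G$ admits a unique minimizer, namely $\mu_*=\MA(\phi_*)$'' requires two further inputs that you do not supply: (i) that the energy functional $F=\xi+\frac1\beta I_{\mu_0}(\beta\cdot)$ attains its infimum, and (ii) that any stationary point of $F$, a priori only a $c$-convex Lipschitz function, is in fact a smooth solution. Without (i) one cannot conclude that $G$ has any minimizer; without (ii) one cannot rule out spurious non-smooth critical points that need not come from solutions of \eqref{MAEqGen}. The paper handles (i) by a compactness argument on $P(X)$ (Lemma~\ref{LemmaExistence}) and (ii) by lifting to a proper Alexandrov solution on $\R^n$ and applying Caffarelli's interior regularity theory (Lemma~\ref{LemmaRegularity}). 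These are essential, non-routine steps of the argument, not bookkeeping.
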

\begin{remark}\label{RemBetaPos}
The assumption that \eqref{MAEqGen} admits a unique solutions is always satisfied when $\beta>0$. This follows from standard arguments (see Theorem \ref{ThmUniqPos}). However, the case $\beta<0$ is a lot more subtle. In our second result we show that, in the special case $\mu_0=\gamma$, the assumption holds for certain negative values of $\beta$ as well.
\end{remark}
\begin{theorem}\label{ThmFUniq}
Assume $\mu_0=\gamma$ and $\beta\in [-1,0)$. Then equation \eqref{MAEqSpec} admits a unique solution.
\end{theorem}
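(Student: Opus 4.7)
The plan is to prove uniqueness by a strict convexity argument for a real-variable analog of the Ding functional from K\"ahler geometry. Equation \eqref{MAEqSpec} is the Euler--Lagrange equation of
\[
D_\beta(\phi) = -\E(\phi) + \frac{1}{\beta}\log\int_X e^{\beta\phi}\gamma,
\]
where $\E$ is the real Monge--Amp\`ere energy, characterized by $d\E(\phi) = \MA(\phi)$. Since $D_\beta$ is invariant under adding constants to $\phi$, and the normalization $\int e^{\beta\phi}\gamma = 1$ is forced for any solution of \eqref{MAEqSpec} upon integrating the equation, uniqueness reduces to showing that $D_\beta$ has, modulo constants, a unique critical point on the space of quasi-convex functions.

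I would then compute the second variation of $D_\beta$ along an affine interpolation $\phi_t = (1-t)\phi_0 + t\phi_1$ of two quasi-convex functions. Integration by parts over the compact torus $X$, together with the divergence-free identity $\partial_i C^{ij} = 0$ for the cofactor matrix $C^{ij}$ of the Hessian $(\phi_{ij}+\delta_{ij})$, yields
\[
\frac{d^2}{dt^2}D_\beta(\phi_t) = \int_X C^{ij}(\phi_t)\, \dot\phi_i\, \dot\phi_j\, dx + \beta\, \mathrm{Var}_{\nu_t}(\dot\phi),
\]
where $\nu_t = e^{\beta\phi_t}\gamma/\int_X e^{\beta\phi_t}\gamma$ and $\dot\phi = \phi_1 - \phi_0$. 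For $\beta > 0$ both terms are non-negative, and strict convexity is automatic when $\dot\phi$ is non-constant. For $\beta \in [-1,0)$ the variance term has the wrong sign, and establishing strict convexity reduces to the weighted Poincar\'e-type inequality
\[
\int_X C^{ij}(\phi)\, f_i f_j\, dx \ \ge\ |\beta|\, \mathrm{Var}_{\MA(\phi)}(f), \qquad |\beta|\le 1,
\]
valid for any quasi-convex $\phi$ and any smooth $f$ on $X$.

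The main obstacle is establishing this Poincar\'e-type inequality with a constant sharp enough to reach $|\beta| = 1$. I would attack it by exploiting the Gaussian origin of $\gamma$: by definition $\gamma$ is the push-forward of $e^{-|x|^2/2}dx$ on $\R^n$ under the projection $\R^n \to X$. Setting $u(x) = |x|^2/2 + \phi(x)$ (which lifts to a convex function on $\R^n$), the Brenier map $\nabla u : X \to X$ transports $\MA(\phi)$ to the Lebesgue measure, while transforming $\int_X C^{ij}(\phi) f_i f_j\, dx$ into a natural Dirichlet form on $(X, dx)$ with metric given by $D^2 u$. Lifting to $\R^n$ and changing variables, the resulting inequality should reduce to the Brascamp--Lieb inequality for the standard Gaussian, whose sharp constant $1$ is precisely what permits the critical threshold $\beta=-1$ in the statement. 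Once the Poincar\'e inequality is in hand, strict convexity of $D_\beta$ modulo constants forces any two critical points to agree up to an additive constant, which is then pinned down by the normalization, giving the desired uniqueness. The technical heart of the argument is the transport step combined with the correct identification of the test function in the Gaussian Brascamp--Lieb inequality; the sharpness of the theorem at $\beta = -1$ reflects directly the sharpness of the Gaussian Poincar\'e constant.
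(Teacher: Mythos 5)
Your overall strategy (uniqueness via convexity of a Ding-type functional, exploiting the Gaussian origin of $\gamma$ by lifting to $\R^n$) is in the same spirit as the paper, but the specific route --- affine interpolation plus a weighted Poincar\'e inequality --- has a gap that the paper avoids by choosing a different path.

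First, a bookkeeping error. Along the affine path $\phi_t = (1-t)\phi_0 + t\phi_1$, the second variation of $\frac{1}{\beta}\log\int_X e^{\beta\phi_t}\gamma$ is $\beta\,\mathrm{Var}_{\nu_t}(\dot\phi)$ with $\nu_t = e^{\beta\phi_t}\gamma / \int_X e^{\beta\phi_t}\gamma$. This measure coincides with $\MA(\phi_t)$ only when $\phi_t$ itself solves \eqref{MAEqSpec}, which generically happens at no interior $t$. So the inequality you need is $\int_X C^{ij}(\phi_t)f_i f_j\,dx \ge |\beta|\,\mathrm{Var}_{\nu_t}(f)$, not the one you wrote with $\mathrm{Var}_{\MA(\phi_t)}(f)$. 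This matters, because your Brascamp--Lieb reduction hinges on transporting $\MA(\phi)$ to Lebesgue by $\nabla u$ and identifying the resulting density as Gaussian; that identification only works when $\MA(\phi)=e^{\beta\phi}\gamma$, i.e., at critical points. With the correct measure $\nu_t$, the transport-to-Gaussian step no longer produces a matching Dirichlet form, and there is no evident Brascamp--Lieb inequality to invoke. Second-variation positivity \emph{at} critical points only gives local minimality, not uniqueness: two isolated minima separated by a saddle would satisfy it. You need convexity along a path joining any two minimizers, and the affine path does not deliver it --- this is exactly the well-known obstruction that the Ding functional in K\"ahler geometry is not convex along affine paths of potentials.

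What the paper does instead is replace the affine path by the $c$-geodesic
\[
\phi_t = \bigl((1-t)\phi_0^c + t\,\phi_1^c\bigr)^c .
\]
Along this path the Monge--Amp\`ere energy term $\xi(\phi_t)=\int_X \phi_t^c\,dx$ is exactly \emph{affine} in $t$ (the $c$-transform undoes itself), so no Poincar\'e inequality is needed to control it. The entire convexity burden falls on $\frac1\beta\log\int_X e^{\beta\phi_t}\gamma$, which after lifting to $\R^n$ via $\gamma = \pi_*(e^{-|x|^2/2}dx)$ becomes $\frac1\beta\log\int_{\R^n} e^{\beta\Phi_t - (\beta+1)|x|^2/2}dx$ with $\Phi_t=\phi_t\circ\pi+|x|^2/2$. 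Because $\Phi_t$ is jointly convex in $(t,x)$ (being a Legendre transform of something affine in $t$) and $\beta\in[-1,0)$, the exponent is jointly concave; Pr\'ekopa's theorem then gives convexity in $t$, with the equality case forcing $\phi_1-\phi_0$ to be constant. The threshold $\beta=-1$ appears as the condition $\beta+1\ge 0$ keeping $-(\beta+1)|x|^2/2$ concave, not (as you conjectured) the sharpness of a Gaussian Poincar\'e constant. To salvage your approach you would either have to prove the affine-path inequality with the correct weight $\nu_t$ (which looks false in general for $n\ge 2$, since the cofactor form $C^{ij}(\phi_t)$ can degenerate in a direction where $\nu_t$ still has large variance), or switch to the $c$-geodesic, at which point the Poincar\'e machinery is unnecessary and Pr\'ekopa does the job directly.
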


Note that if $\beta\not=0$ and $\mu_* = MA(\phi_*)dx$ where $\phi_*$ is a solution to \eqref{MAEqGen}, then $\phi_*$ can be recovered from $\mu_*$ as 
$\phi_* = \frac{1}{\beta}\log \rho$
where $\rho$ is the density of $\mu_*$ with respect $\mu_0$. 
In fact we get the following corollary of Theorem \ref{MainThmGen}.
\begin{corollary}\label{CorrGen}
Let $\mu_0\in \M_1(X)$ be absolutely continuous and have smooth, strictly positive density with respect to $dx$. Let $\beta\not=0$. Assume also that \eqref{MAEqGen} admits a unique solution, $\phi_*$. Let $\phi_N:X\rightarrow \R$ be the function defined by
$$ \phi_N(x_1) = \frac{1}{\beta}\log \int_{X^{N-1}}\left(\perm(\Psi^{(N)}_{p_i}(x_j)\right)^{\beta/k} d\mu^{\otimes (N-1)}(x_2,\ldots,x_n)/Z_{\beta,N}. $$
Then $\phi_N$ converges uniformly to $\phi^*$.
\end{corollary}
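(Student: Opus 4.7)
My plan is to identify $\phi_N$ with a normalized $\log$-density of the first marginal $\nu_N$ of $\mu_\beta^{(N)}$ and combine weak convergence of $\nu_N$ (from Theorem~\ref{MainThmGen}) with a uniform $C^{0,1}$ bound on the sequence $(\phi_N)$ to upgrade to uniform convergence. By construction,
$$ \nu_N := e^{\beta\phi_N}\mu_0 $$
is the first marginal of $\mu_\beta^{(N)}$. Exchangeability of $\mu_\beta^{(N)}$ gives $\nu_N = \mathbb{E}_{\mu_\beta^{(N)}}[\delta^{(N)}(x)]$, and since $\nu \mapsto \int f\,d\nu$ is a bounded continuous functional on $\M_1(X)$, Theorem~\ref{MainThmGen} implies $\int f\,d\nu_N \to \int f\,d\mu_*$ for every $f\in C(X)$. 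Using $\mu_* = \MA(\phi_*) = e^{\beta\phi_*}\mu_0$, this reads $e^{\beta\phi_N}\mu_0 \to e^{\beta\phi_*}\mu_0$ weakly.

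The crux of the proof is then a uniform Lipschitz estimate on $\phi_N$. The key cancellation is that $\nabla\log\Psi_p$ grows linearly in $k$, while the permanent is raised to the power $\beta/k$. Direct differentiation gives
$$ \nabla\log\Psi_p(x) = -k\sum_{m\in\Z^n+p} w_m(x)(x-m), \qquad w_m(x) = \frac{e^{-k|x-m|^2/2}}{\Psi_p(x)}, $$
and by separating the nearest lattice point from the exponentially suppressed tail one obtains $|\nabla\log\Psi_p(x)| \le C_0 k$ with $C_0$ independent of $p$ and $k$. Writing $\perm(\Psi_i(x_j)) = \sum_i \Psi_i(x_1)\,M_i$ with $M_i\ge 0$ independent of $x_1$, the same weighted-average argument shows $|\nabla_{x_1}\log\perm|\le C_0 k$ pointwise in $(x_2,\dots,x_N)$, hence $|\nabla_{x_1}\log(\perm)^{\beta/k}|\le C_0|\beta|$. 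A further averaging against the probability measure $(\perm)^{\beta/k}\,d\mu_0^{\otimes(N-1)}/(Z_{\beta,N}\,e^{\beta\phi_N(x_1)})$ preserves this bound, so $|\nabla\phi_N|\le C_0$ uniformly in $N$. The normalization $\int e^{\beta\phi_N}d\mu_0 = 1$ then forces $\max\phi_N$ and $\min\phi_N$ to lie in a fixed bounded interval, giving uniform $L^\infty$ control.

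With equicontinuity and uniform boundedness in hand, Arzelà--Ascoli extracts a subsequence $\phi_{N_j}$ converging uniformly to some $\tilde\phi\in C(X)$; then $e^{\beta\phi_{N_j}}\mu_0 \to e^{\beta\tilde\phi}\mu_0$ uniformly, while from the first paragraph this same sequence converges to $e^{\beta\phi_*}\mu_0$ weakly. Hence $e^{\beta\tilde\phi}\mu_0 = e^{\beta\phi_*}\mu_0$ and therefore $\tilde\phi = \phi_*$, since $\mu_0$ has positive density and both functions are continuous. As every subsequential limit equals $\phi_*$, the full sequence converges uniformly. The main technical obstacle is the uniform-in-$k$ estimate $|\nabla\log\Psi_p|\le C_0 k$ together with its preservation through the two successive averaging steps; once these are established, the conclusion follows by exchangeability and Theorem~\ref{MainThmGen}.
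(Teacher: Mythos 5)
Your proof is correct and follows essentially the same strategy as the paper's: pass the weak* convergence $\Gamma_\beta^{(N)}\to\delta_{\mu_*}$ down to the first marginals $e^{\beta\phi_N}\mu_0\to\mu_*$ (the paper cites Sznitman; your exchangeability argument is a self-contained version of the same thing), establish equicontinuity and uniform boundedness of $\{\phi_N\}$, apply Arzel\`a--Ascoli, and identify every subsequential limit with $\phi_*$ using continuity and full support of $\mu_0$. The one place you genuinely diverge is the equicontinuity step. The paper asserts, citing Lemma~\ref{LemmaLogConv}, that $\phi_N$ is $-1$-convex and hence lies in $P(X)$, then invokes Lemma~\ref{LemmaLips} to get the Lipschitz constant $1$. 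That route is transparent for $\beta>0$, where $(\beta/k)\log\perm$ is $-\beta$-convex so Lemma~\ref{LemmaLogConv} applies and the conclusion rescales to $-1$-convexity of $\phi_N$; but for $\beta<0$ the hypothesis of Lemma~\ref{LemmaLogConv} is not literally met, since $(\beta/k)\log\perm$ is then the \emph{negative} of a convex function with Hessian of size $O(k)$, not $\lambda$-convex for a $k$-independent $\lambda$. What actually survives for both signs of $\beta$ is the weaker (and sufficient) fact that $\frac{1}{k}\log\perm(\cdot,x_2,\dots,x_N)$ is $1$-Lipschitz in its first slot --- precisely the content of Corollary~\ref{CorHNLip} --- and that a Lipschitz bound in the exponent is preserved by the averaging $e^{\beta\phi_N}=Z^{-1}\int(\perm)^{\beta/k}\,d\mu_0^{\otimes(N-1)}$ regardless of the sign of $\beta$. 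Your direct computation (the estimate $|\nabla\log\Psi_p|\le C_0 k$ from isolating the nearest lattice point, then two successive weighted-average steps) is a hands-on derivation of exactly that Lipschitz bound, and it sidesteps the $\beta<0$ subtlety in the paper's appeal to $P(X)$ membership. The uniform $L^\infty$ bound from the normalization and the rest of the argument coincide with the paper's.
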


If we put $\beta=0$ in \eqref{MAEqGen} we get the inhomogenous Monge-Amp\`ere equation. Solutions then determine Optimal Transport maps on $X$. Now, although Corollary \ref{CorrGen} doesn't cover the case $\beta=0$, by considering $\mu_{\beta_N}^{(N)}$ for the sequence of constants $\beta_N=1/N$ we will be able to produce explicit approximations of optimal transport maps. However, when working with optimal transport it is natural to consider a more general setting than the one proposed for equation \eqref{MAEqGen}. Because of this we will not state this corollary here but postpone it to Section \ref{SectApprox}. 


\subsection{Outline}\label{SectOutline}
\paragraph{Convergence in Theorem \ref{MainThmGen} and a Large Deviation Principle}
Theorem~\ref{MainThmGen} will follow from a \emph{large deviation principle} for the sequence $\Gamma^{(N)}$ (see Theorem~\ref{MainThmLDP}). This large deviation principle provides a quantitative description of the convergence in Theorem~\ref{MainThmGen}, recording the speed of convergence in a \emph{rate function} $G:\M_1(X) \rightarrow [0,\infty)$, satisfying $\inf G = 0$ and a \emph{rate} $\{r_N\}\subset \R$ such that $r_N\rightarrow \infty$ as $N\rightarrow \infty$. We will give a formal definition of large deviation principles in Section \ref{SectionLDP}. Roughly speaking, a large deviation principle with rate function $G$ and rate $r_N$ holds if, for $U\subset \M_1(X)$, the probability $\Gamma(U)$ behaves as 
$$ e^{-r_N \inf_U G} $$
as $N\rightarrow \infty$. 
This means $\Gamma^{(N)}$, for large $N$, is concentrated where $G$ is small. In particular, if $G$ admits a unique minimizer, $\mu_*$, (where $G=0$) then it follows that $\Gamma^{(N)}$ converges in the weak* topology to $\delta_{\mu_*}$. 

\paragraph{Proof of the Large Deviation Principle}
It turns out that the rate function above is related to the Wasserstein metric of optimal transport. In Section \ref{SectOT} we will recall some basic facts about optimal transport. In particular, we explain how Kantorovich' duality principle gives an explicit formula for the Legendre transform of the squared Wasserstein distance from a fixed measure. The proof of Theorem \ref{MainThmLDP} is given in Section \ref{SectionLDP} and it is divided into two parts of which the first part uses this explicit formula. In the first part, given in Section \ref{SectBetaInf}, we take a sequence of constants $\beta_N$ such that $\beta_N\rightarrow \infty$ and study the family $\{\Gamma_{\beta_N}^{(N)}\}$. In the thermodynamic interpretation this means we are studying the zero temperature limit of the system. Using the formula given by Kantorovich duality and the G\"artner-Ellis theorem, relating the moment generating functions of $\Gamma^{(N)}_{\beta_N}$ to the Legendre transform of a rate function, we prove a large deviation principle for this family (see Theorem \ref{ThmBetaInf}). In the second part of the proof we show how the large deviation principle in Theorem \ref{MainThmLDP} can be deduced from this. This is based on essentially well known arguments. However, for completeness we give a proof of this in Section \ref{SectGenCase}. It turns out that the crucial point is the equicontinuity and uniform boundedness of the (normalized) energy functions 
$$ -\frac{1}{kN}\log\perm(\Psi^{(N)}_i(x_j)). $$
These properties will follow from equicontinuity properties and bounds on the wave functions $\Psi^{(N)}_i$ and we give a proof of these properties in Section \ref{SectProofLDP}. 

\paragraph{Connection to the Monge-Amp\`ere Equation}
The final ingredients in the proof of Theorem \ref{MainThmGen} are given in Section \ref{SectLemmaF} and Section \ref{SectDual} (essentially by Lemma \ref{LemmaF} and Theorem \ref{LemmaDual}). These sections connect the large deviation principle above with the Monge-Amp\`ere equation \eqref{MAEqGen}. Note that, as $\inf G=0$, $G$ admits a unique point where $G=0$ if and only if $G$ admits a unique minimizer. We apply a variational approach to \eqref{MAEqGen}. Uniqueness and existence of solutions is studied by means of a certain energy functional on $C(X)$ whose stationary points corresponds to weak solutions of \eqref{MAEqGen}. The rate function above, $G$, is closely related to this energy functional. This relation encodes the fact that minimizers of $G$ arise as the Monge-Amp\`ere measures of solutions to \eqref{MAEqGen}. Moreover, it follows from this relation that $G$ admits a unique minimizer if the energy functional does, which is true if and only if \eqref{MAEqGen} admits a unique solution. 

\paragraph{Theorem \ref{ThmFUniq}}
Existence of weak solutions will follow from the variational approach and compactness properties of the space of quasi convex functions on $X$ (see Section \ref{SectExistence}) and regularity will follow from results by Cafarelli explained in Lemma \ref{LemmaRegularity}. These type of existence results for Monge-Amp\`ere equations on affine manifolds was originally proven by Caffarelli and Viaclovsky \cite{CaffarelliViaclovsky} on the one hand and Cheng and Yau \cite{ChengYau} on the other. However, we will provide an alternative proof based on the variational principle above. Uniqueness, which is the main new contribution in this chapter is proved in Section \ref{SectStrictConv}. Here we look at the space of quasi-convex functions equipped with an affine structure different from the standard one. It will then follow from the Prekopa inequality that the energy functional associated to \eqref{MAEqSpec} is strictly convex with respect to this affine structure, hence admits no more than one minimizer. This is an extension of an argument used in \cite{BermanBerndtsson} to prove uniqueness of K\"ahler-Einstein metrics on toric Fano manifolds. Curiously, there doesn't seem to be any direct argument for this using the Prekopa theorem on Riemannian manifolds (see \cite{CorderoEtAl}). Instead, we need to lift the problem to the covering space $\R^n$ and use that $\gamma$ is the push forward of a measure on $\R^n$ with strong log-concavity properties. 

\paragraph{Geometric Motivation}
In Section \ref{SectGeo} we explain the connections to the point processes on compact K\"ahler manifolds introduced by Berman in \cite{BermanBiRat}. More precisely, we explain the connection with a complex Monge-Amp\`ere equations on $\C^n/4\pi\Z^n+i\Z^n$ and how the wave functions and permanental point processes defined here are connected to theta-functions and determinantal point processes on $\C^n/4\pi\Z^n+i\Z^n$. Finally, in Section \ref{SectApprox} we show how the connection to optimal transport can be used to get explicit approximations of optimal transport maps on $X$. 

We end this section with a comment. While some parts of Section~\ref{SectionLDP} might be well known to readers with a probabilistic background and, likewise, some parts of Section~\ref{SectRate} might be familiar to readers with a background in geometry or optimal transport we nevertheless want to provide a paper that is accessible to readers from all three of these fields. This should (at least partly) explain the length of the paper.




\section{Preliminaries: Optimal Transport on Real Tori}\label{SectOT}
In this section we will recall some basic theory of optimal transport. The content of the chapter is well known. Early contributors to the theory are Cordera-Erasquin \cite{Cordero} who established a a theory of optimal transport on real tori and McCann \cite{McCann} who took it to the very general setting of Riemannian manifolds. The reason for this is the close relation between optimal transport and real Monge-Amp\`ere equations. The most important part is Corollary \ref{CorrDXi}. There we explain how Kantorovich' duality theorem give a variational approach to real Monge-Amp\`ere equations and an explicit formula for the Legendre transform of the functional $\mu\rightarrow W^2(\mu,dx)$, where $W^2(\cdot,\cdot)$ is the Wasserstein metric, a distance function on $\M_1(X)$ defined in terms of optimal transport and which turn up in the rate function describing the behaviour of the point process $\Gamma^{(N)}$ as $N\rightarrow \infty$.

\subsection{Kantorovich' Problem of Optimal Transport} 
We will use Kantorovich' formulation (as opposed to Monge's formulation) of the optimal transport problem. The given data is a smooth manifold $Y$, a \emph{cost function} $c:Y\times Y\rightarrow [0,\infty)$, a \emph{source measure}, $\mu\in \M_1(Y)$ and a \emph{target measure}, $\nu\in \M_1(Y)$. Kantorovich problem of optimal transport is the problem of minimizing the functional
$$ C(\gamma) = \int_{Y\times Y} c(x,y) d\gamma(x,y) $$
over the set of \emph{transport plans}, $\Pi(\mu,dx)$, consisting of measures $\gamma\in \M_1(Y\times Y)$ such that the first and second marginals of $\gamma$ equal $\mu$ and $\nu$ respectively. The optimal transport distance between $\mu$ and $\nu$ is the quantity
\begin{equation} \inf_{\gamma\in\Pi(\mu,dx)} C(\gamma). \label{EqTotalCost} \end{equation}
In our case $Y=X$, $\nu=dx$ and $c = d(\cdot,\cdot)^2/2$ where $d$ is the distance function on $X$ induced from $\R^n$. In other words, if $x,y\in \R^n$ and $\pi:\R^n\rightarrow X$ is the quotient map, then
$$ c(\pi x,\pi y) = \frac{d(\pi x,\pi y)^2}{2} = \frac{\inf_{m\in Z^n} |x-y-m|^2}{2}.$$
With this choice of cost function, \eqref{EqTotalCost} is often referred to as the (squared) Wasserstein distance, $W^2(\mu,dx)$, between $\mu$ and $dx$. 


\subsection{The $c$-Transform and $c$-Convex Functions}\label{SectPX}
A cost function in optimal transport defines a $c$-transform, closely related to Legendre transform on $\R^n$. Let $C(X)$ be the space of continuous functions on $X$. For $\phi\in C(X)$ the $c$-transform of $\phi$ is
\begin{eqnarray} \phi^c(y)=\sup_{x\in X} -c(x,y)-\phi(x) = \sup_{x\in X} -\frac{d(x,y)^2}{2}-\phi(x) \label{CTransf} \end{eqnarray}
Note that if $\phi$ is a smooth function on $X$ such that $(\phi_{ij}+\delta_{ij})$ is positive definite, then there is a natural way of associating to $\phi$ a convex function on $\R^n$, namely 
\begin{equation} \Phi(x) = \phi(\pi x)+\frac{x^2}{2}. \label{EqPhi} \end{equation}
Let $C(\R^n)$ be the space of continuous functions on $\R^n$ and if $\Phi\in C(X)$ let $\Phi^*$ denote the Legendre transform of $\Phi$. The map from $C(X)$ to $C(\R^n)$ given by $\phi\mapsto \Phi$, relates $c$-transform on $X$ to Legendre transform on $\R^n$ in the sense that
\begin{lemma}\label{LemmaLegCom}
Let $\phi\in C(X)$ and 
$$ \Phi(x) = \phi(\pi x) + \frac{x^2}{2}. $$ 
Then 
$$ \Phi^*(y) = \phi^c(\pi y) + \frac{y^2}{2}. $$
\end{lemma}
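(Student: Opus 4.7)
The plan is a direct calculation: unfold both definitions and use the fact that the torus distance is itself a supremum (over lattice translates) to rewrite $\phi^c$ as a Legendre-type expression on $\R^n$.

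First I would fix $y \in \R^n$ and rewrite the cost in $\phi^c(\pi y)$ in terms of lifts. By definition,
\[
d(\pi x',\pi y)^2 \;=\; \inf_{m\in\Z^n} |x'-y-m|^2 \;=\; \inf_{x\in\pi^{-1}(\pi x')} |x-y|^2,
\]
so $-\tfrac12 d(\pi x',\pi y)^2 = \sup_{x\in\pi^{-1}(\pi x')}\bigl(-\tfrac12|x-y|^2\bigr)$. Since $\phi(\pi x')$ only depends on the class of $x'$, the supremum over $X$ in \eqref{CTransf} can be combined with this supremum over lifts to yield a single supremum over $\R^n$:
\[
\phi^c(\pi y) \;=\; \sup_{x\in\R^n}\Bigl(-\tfrac12|x-y|^2 - \phi(\pi x)\Bigr).
\]

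Next I would complete the square. Writing $-\tfrac12|x-y|^2 = x\cdot y - \tfrac12|x|^2 - \tfrac12|y|^2$ and pulling the $y$-dependent constant $-\tfrac12|y|^2$ out of the supremum, the right-hand side becomes
\[
-\tfrac{|y|^2}{2} + \sup_{x\in\R^n}\Bigl(x\cdot y - \bigl(\phi(\pi x) + \tfrac{|x|^2}{2}\bigr)\Bigr) \;=\; -\tfrac{|y|^2}{2} + \Phi^*(y),
\]
by the definition of $\Phi$ and of the Legendre transform. Rearranging gives the claimed identity $\Phi^*(y) = \phi^c(\pi y) + |y|^2/2$.

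There is no real obstacle here: the content is simply that completing the square turns the Euclidean cost $|x-y|^2/2$ into the linear pairing appearing in the Legendre transform, and that the torus distance squared is, by definition, the infimum of the lifted Euclidean costs. The only point to handle carefully is justifying the swap of the two suprema (over $X$ and over lifts) into a single supremum over $\R^n$, which is immediate since $\R^n$ is precisely the disjoint union of the $\Z^n$-orbits and $\phi\circ\pi$ is invariant under these orbits.
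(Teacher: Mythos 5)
Your argument is essentially the paper's proof in reverse order: you start from $\phi^c(\pi y)$, unfold the torus distance as an infimum over lattice translates, and merge the two suprema into one over $\R^n$, then complete the square, whereas the paper begins with the supremum over $\R^n$, splits it into a supremum over a fundamental domain and one over $\Z^n$, identifies $\phi^c$, and then observes that $\Phi^*$ is that same quantity shifted by $|y|^2/2$. The decomposition (lattice orbits plus a fundamental domain), the use of $\Z^n$-invariance of $\phi\circ\pi$, and the completing-the-square step are all the same, so this is the same approach.
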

\begin{proof}
Note that
\begin{eqnarray} \sup_{x\in \R^n} -\frac{|x-y|^2}{2} - \phi(\pi x) & = & \sup_{x\in [0,1]^n, m\in \Z^n} -\frac{|x-y-m|^2}{2} - \phi(\pi x) \nonumber \\
& = & \sup_{x\in [0,1]^n} -\inf_{m\in \Z^n} \frac{|x-y-m|^2}{2} -\phi(\pi x) \nonumber \\
& = & \sup_{x\in X} -\frac{d(x,\pi y)^2}{2} - \phi(x) \nonumber \\
& = & \phi^c(\pi y). \nonumber 
\end{eqnarray}
This means
\begin{eqnarray} \Phi^*(y) & = & \sup_{x\in \R^n} \bracket{x}{y} - \Phi(x) \nonumber \\
& = & \sup_{x\in \R^n} -\frac{|x-y|^2}{2} - \phi(\pi x) +\frac{y^2}{2} \nonumber \\
& = & \phi^c(\pi y) + \frac{y^2}{2}. \nonumber
\end{eqnarray}
which proves the lemma. 
\end{proof}
It follows that $\phi\in C(X)$ satisfies $(\phi^c)^c=\phi$ if and only if $\Phi$ is convex. The property $(\phi^c)^c=\phi$ is often referred to as $c$-convexity and we will denote the set of functions in $C(X)$ that satisfy this $P(X)$. Since $\Phi^*$ is convex for any $\Phi\in C(\R^n)$ we get that $\phi^c\in P(X)$, for any $\phi\in C(X)$. Moreover, also from the theory of convex functions on $\R^n$, we get that the projection $\phi\mapsto (\phi^c)^c$ of $C(X)$ onto $P(X)$ is monotone in the sense that $(\phi^c)^c(x) \leq \phi(x)$ for all $x\in X$. 

Let $P(\R^n)$ be the set of convex functions on $\R^n$. It is easy to verify that the image of $P(X)$ in $P(\R^n)$ under the map $\phi\mapsto \Phi$ (where $\Phi$ is given by \eqref{EqPhi}) is given by the set
\begin{eqnarray} P_{\Z^n}(\R^n) & = & \{ \Phi\in P(\R^n): \Phi(x+m)-\frac{|x+m|^2}{2} = \Phi(x)-\frac{x^2}{2} \forall m\in \Z^n \} \nonumber \\
& = &  \{ \Phi\in P(\R^n): \Phi(x+m) = \Phi(x)+\bracket{x}{m}+\frac{m^2}{2} \forall m\in \Z^n \} \nonumber \\
& & \label{DefCvx} 
\end{eqnarray}

Now, let $\phi\in P(X)$ and $\Phi$ be the image of $\phi$ in $P_{\Z^n}(\R^n)$. Then $\Phi$ is differentiable at a point $x\in \R^n$ if and only if $\phi$ is differentiable at $\pi x$. Since a convex function on $\R^n$ is differentiable almost everywhere we get that any $\phi\in P(X)$ is differentiable almost everywhere (with respect to $dx$). Further, it follows from \eqref{DefCvx} that $\Phi$ is differentiable at $x$ and $\nabla\Phi(x)=y$ if and only if $\Phi$ is differentiable at $x+m$ and $\nabla\Phi(x+m) = y+m$. This means the map $\nabla \Phi:\R^n\rightarrow \R^n$, where it is defined, factors through to a map $X\rightarrow X$. This map is the so called $c$-gradient map in optimal transport, denoted $\nabla^c \phi$. It satisfies the formula
$$ \nabla^c\phi(\pi x) = \pi \nabla \Phi(x). $$ Further, $\Phi$ is differentiable at $x$ and $\Phi(x)=y$ if and only if $y$ is the unique point in $\R^n$ such that
\begin{equation} \Phi(x) + \Phi^*(y) = \bracket{x}{y}. \label{EqGradDef1} \end{equation}
This holds if and only if
\begin{equation} \phi(\pi x) + \phi^c(\pi y) = -\frac{d(\pi x,\pi y)^2}{2}. \label{EqGradDef2} \end{equation}
We conclude that $\phi$ is differentiable and $\nabla^c \phi(\pi x)=\pi y$ if and only if $\pi y$ is the unique point in $X$ such that \eqref{EqGradDef2} holds. In fact, this is the usual definition of the $c$-gradient and one of its strengths is that it becomes immediately apparent that if $\phi$ is differentiable at $x$ and $\phi^c$ is differentiable at $y=\nabla^c\phi(x)$, then $\nabla^c\phi^c(y)=x$. 

The definition of the Monge-Amp\`ere operator in \eqref{MAOp} makes sense for twice differentiable functions. We will now provide an extension of this operator to $P(X)$. 
\begin{definition}\label{DefWeakOp}
Let $\phi\in P(X)$. We define the Monge-Amp\`ere measure, $\MA(\phi)$, of $\phi$ as
$$ \MA(\phi)= (\nabla^c\phi^c)_* dx. $$
Consequently, we refer to functions in $P(X)$ satisfying 
$$(\nabla^c\phi^c)_* dx = \mu$$
as weak solutions to 
\begin{equation}
\MA(\phi) = \mu \label{MAEqInHom}.
\end{equation}
\end{definition}
Now, the following lemma will serve as a direct justification of Definition~\ref{DefWeakOp} and we will see in Theorem \ref{ThmKantDual} that it fits nicely into the theory of optimal transport. Moreover, weak solutions to \eqref{MAEqGen} in terms of Definition~\ref{DefWeakOp} is the natural analog of so called Alexandrov solutions to Monge-Amp\`ere equations on $\R^n$ (see Section~\ref{SectRegularity}). In fact, we will see in Lemma \ref{LemmaEqLift} that the map $\phi\mapsto \Phi$ where $\Phi$ is given by \eqref{EqPhi} gives a direct link between these two types of solutions.
\begin{lemma}\label{LemmaMASmooth}
Assume $\phi$ is smooth and $(\phi_{ij}+\delta_{ij})$ is strictly positive definite. Then 
$$  \det( \phi_{ij} +\delta_{ij}) dx = (\nabla^c\phi^c)_* dx. $$
\end{lemma}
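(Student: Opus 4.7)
The plan is to lift everything to $\R^n$ using the correspondence $\phi \mapsto \Phi$ from \eqref{EqPhi}, and then use the standard change-of-variables formula together with the fact that the gradient of a smooth strictly convex function is a diffeomorphism whose inverse is the gradient of the Legendre dual.

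First I would observe that the hypothesis implies $\Phi(x) = \phi(\pi x) + x^2/2$ is smooth on $\R^n$ with Hessian $(\phi_{ij}(\pi x) + \delta_{ij})$, which is strictly positive definite. Hence $\Phi$ is smooth and strictly convex, so $\nabla \Phi : \R^n \to \nabla\Phi(\R^n)$ is a local diffeomorphism. The $\Z^n$-equivariance from \eqref{DefCvx} gives $\nabla\Phi(x+m) = \nabla\Phi(x) + m$, so $\nabla \Phi - \mathrm{id}$ is $\Z^n$-periodic and thus bounded; combined with being a local diffeomorphism, this makes $\nabla\Phi$ a global diffeomorphism $\R^n \to \R^n$. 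By the standard theory of Legendre transform, $\Phi^*$ is then smooth, strictly convex, and $(\nabla\Phi)^{-1} = \nabla\Phi^*$. By Lemma~\ref{LemmaLegCom}, $\Phi^*(y) = \phi^c(\pi y) + y^2/2$, and the discussion after \eqref{EqGradDef2} in Section~\ref{SectPX} gives $\nabla^c \phi^c(\pi y) = \pi \nabla \Phi^*(y)$.

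Next, to compute the push-forward, I would test against an arbitrary $f \in C(X)$. Identifying $X$ with the fundamental domain $F = [0,1]^n$, we have
\begin{equation}
\int_X f \, d(\nabla^c \phi^c)_* dx = \int_F f(\pi \nabla \Phi^*(y)) \, dy. \nonumber
\end{equation}
Now I would make the substitution $y = \nabla\Phi(x)$, which is a diffeomorphism from $U := (\nabla\Phi)^{-1}(F)$ to $F$, with Jacobian determinant $\det(\phi_{ij}(\pi x) + \delta_{ij})$. The $\Z^n$-equivariance $\nabla\Phi(x+m) = \nabla\Phi(x) + m$ implies that $U+m = (\nabla\Phi)^{-1}(F+m)$, so the translates $\{U+m\}_{m\in\Z^n}$ tile $\R^n$ up to measure zero; hence $U$ is a fundamental domain for the $\Z^n$-action.

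Finally, since both $f(\pi x)$ and $\det(\phi_{ij}(\pi x) + \delta_{ij})$ are $\Z^n$-periodic, the integral over $U$ equals the integral over any other fundamental domain, giving
\begin{equation}
\int_U f(\pi x) \det(\phi_{ij}(\pi x) + \delta_{ij}) \, dx = \int_X f \, \det(\phi_{ij} + \delta_{ij}) \, dx, \nonumber
\end{equation}
which proves the two measures agree. The only non-routine point is verifying that $\nabla\Phi$ is a global diffeomorphism (so that the change of variables is valid and $U$ is a well-defined fundamental domain), but this is immediate from the combination of strict convexity, smoothness and the $\Z^n$-equivariance already built into $P_{\Z^n}(\R^n)$.
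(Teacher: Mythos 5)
Your proof is correct and follows essentially the same strategy as the paper: lift to $\R^n$ via $\phi\mapsto\Phi$, invoke Lemma~\ref{LemmaLegCom}, and use the change of variables with Jacobian $\det(\Phi_{ij})=\det(\phi_{ij}+\delta_{ij})$. The only cosmetic difference is that you substitute with $\nabla\Phi$ and observe that $U=(\nabla\Phi)^{-1}([0,1]^n)$ is a fundamental domain, whereas the paper substitutes with $\nabla\Phi^*$ and observes that $\pi$ restricts to a diffeomorphism from $\nabla\Phi^*([0,1)^n)$ onto $X$; since $\nabla\Phi^*=(\nabla\Phi)^{-1}$, these are the same statement.
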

\begin{proof}
First of all, we claim that $\nabla^c\phi^c:X \rightarrow X$ is one-to-one. To see this, assume that $\nabla^c\phi^c(x_1)=\nabla^c\phi^c(x_2)$ for $x_1,x_2\in X$. Let $\tilde x_1, \tilde x_2\in\R^n$ be lifts of $x_1$ and $x_2$ respectively and $\Phi^*$ be the image of $\phi^*$ in $P_{\Z^n}(\R^n)$. We get
$$ \nabla\Phi^*(\tilde x_1) = \nabla\Phi^*(\tilde x_2) + m. $$
By \eqref{DefCvx} we get $\nabla\Phi^*(\tilde x_1)=\nabla\Phi^*(\tilde x_2+m)$. But since $\phi$, and hence $\Phi$, is smooth $\Phi^*$ must be strictly convex. This means $\tilde x_1=\tilde x_2 + m$ and $x_1=x_2$, proving the claim. 

The previous claim implies, since $\pi\circ \nabla\Phi^*=\nabla^c\phi^c\circ \pi$, that $\pi$ maps $\nabla\Phi^*([0,1)^n)$ diffeomorphically to $X$. Further,
\begin{equation} \det( \phi_{ij} +\delta_{ij})\circ \pi = \det(\Phi_{ij}) = \frac{1}{\det(\Phi^*_{ij})} \label{EqJacDet} \end{equation}
and the numerator of the right hand side of \eqref{EqJacDet} is the Jacobian determinant of the map $\nabla\Phi^*:\R^n\rightarrow \R^n$. Let $h\in C(X)$. Then 
\begin{eqnarray} 
\int_X  h \det(\phi_{ij}+\delta_{ij}) dx &  = &  \int_{\nabla\Phi^*([0,1)^n)} \frac{h \circ \pi}{\det(\Phi^*_{ij})} dx = \int_{[0,1)^n} h\circ \pi \circ \nabla\Phi^*  dx \nonumber \\
& = & \int_{[0,1)^n} h\circ \nabla^c\phi^c \circ \pi dx =  \int_X h\circ \nabla^c\phi^c dx. \label{EqChVar}
\end{eqnarray}
which proves the lemma. 
\end{proof}


\subsection{Kantorovich Duality}
We now return to the problem of optimal transport. Although it has very satisfactory solutions providing existence and characterization of minimizers under great generality, we will only give part of that picture here. For us, the important feature of the problem of optimal transport is its dual formulation. Introducing the functional $\xi$ on $C(X)$ defined by
$$ \xi(\phi) = \int_X \phi^c dx $$
we get a functional $J$ on $C(X)$ 
$$ J(\phi)= -\int_X \phi d\mu - \xi(\phi). $$
This functional describes the dual formulation of the problem of optimal transport in the sense that $W^2(\mu,dx)$ can be recovered as the supremum of $J$ over $C(X)$. Moreover, the maximizers of $J$ are weak solutions to a certain Monge-Amp\`ere equation. This is recorded in the following theorem. 
\begin{theorem}[\cite{Kantorovich},\cite{KnottSmith},\cite{Brenier}]\label{ThmKantDual}
Let $\mu\in\M_1(X)$ be absolutely continuous with respect to $dx$. Let $c=d^2/2$ where $d$ is the distance function on $X$ induced from $\R^n$. Then 
\begin{equation} W^2(\mu,dx) = \inf_{\gamma\in \Pi(\mu,dx)} I(\gamma) = \sup_{\phi\in C(X)} J(\phi). \label{KantDualEq} \end{equation}
and there is $\phi_\mu\in P(X)$ such that 
\begin{equation} \sup_{\phi\in C(X)} J(\phi) = J(\phi_\mu). \label{DualOpt} \end{equation}
Moreover,
\begin{equation} \MA(\phi_\mu) =\mu. \label{PushForward} \end{equation} 
\end{theorem}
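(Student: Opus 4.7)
The plan is to combine the standard direct-method approach to Kantorovich duality with the lifting $\phi \mapsto \Phi$, $\Phi(x) = \phi(\pi x) + x^2/2$, which by Lemma~\ref{LemmaLegCom} intertwines $c$-transform on $X$ with Legendre transform on $\R^n$ and carries $P(X)$ bijectively onto $P_{\Z^n}(\R^n)$. First I would dispose of the easy inequality $\sup_\phi J(\phi) \leq \inf_\gamma I(\gamma)$ by integrating the pointwise bound $-\phi(x)-\phi^c(y) \leq c(x,y)$, immediate from the definition of $\phi^c$, against an arbitrary $\gamma \in \Pi(\mu,dx)$. Using $(\phi^c)^c \leq \phi$ and $((\phi^c)^c)^c = \phi^c$ one has $J((\phi^c)^c) \geq J(\phi)$, so the supremum may be restricted to $P(X)$.

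Next I would establish existence of a maximizer $\phi_\mu \in P(X)$ by the direct method. The functional $J$ is invariant under $\phi \mapsto \phi + t$, so I normalize $\sup_X \phi = 0$. Passing through the lift, a normalized maximizing sequence in $P_{\Z^n}(\R^n)$ is equibounded and locally equi-Lipschitz (standard for convex functions with prescribed quasi-periodicity), so Arzel\`a--Ascoli yields a locally uniformly convergent subsequence whose limit descends to some $\phi_\mu \in P(X)$; since $\phi \mapsto \phi^c$ is continuous in the uniform topology, $J$ is upper semicontinuous and $\phi_\mu$ is a maximizer. To identify $\phi_\mu$, I compute the first variation: for $\psi \in C(X)$, applying the envelope theorem pointwise to the sup defining $(\phi_\mu+t\psi)^c(y)$, combined with a.e.\ differentiability of the lift $\Phi_\mu^*$ (a convex function on $\R^n$) and dominated convergence, yields
\begin{equation*}
\left.\frac{d}{dt}\right|_{t=0} J(\phi_\mu + t\psi) \;=\; -\int_X \psi \, d\mu \;+\; \int_X \psi\bigl(\nabla^c\phi_\mu^c(y)\bigr) \, dy.
\end{equation*}
Stationarity for all $\psi$ forces $(\nabla^c\phi_\mu^c)_* dx = \mu$, i.e.\ $\MA(\phi_\mu) = \mu$, which is \eqref{PushForward}.

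Finally, setting $T = \nabla^c\phi_\mu^c$ and $\gamma_* = (T \times \mathrm{id})_* dx \in \Pi(\mu,dx)$, the equality case \eqref{EqGradDef2} gives $\phi_\mu(T(y)) + \phi_\mu^c(y) = -c(T(y),y)$ for $dx$-a.e.\ $y$, and integrating yields $I(\gamma_*) = -\int \phi_\mu \, d\mu - \int \phi_\mu^c \, dx = J(\phi_\mu)$. Chained with the weak inequality this collapses the three quantities in \eqref{KantDualEq}. The main obstacle is the rigorous first variation step: one needs enough regularity of $\phi_\mu^c$ to justify interchanging $d/dt$ with the integral (equivalently, enough control on the difference quotients of the $c$-transform). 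The lifting device reduces this to standard facts about a.e.\ differentiability and the subgradient selection for convex functions on $\R^n$, which is why the torus case does not require substantially new ideas beyond Brenier's theorem in Euclidean space.
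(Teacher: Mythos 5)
Your argument is correct, but it takes a genuinely different route from the paper. The paper's proof is essentially a citation: it invokes Theorem~5.10 of Villani's \emph{Optimal Transport: Old and New} to obtain simultaneously the duality identity, an optimal plan $\gamma$, and a $c$-convex potential $\phi_\gamma$ on whose ``contact set'' $\gamma$ is concentrated; the identities \eqref{DualOpt} and \eqref{PushForward} are then read off from the concentration property. You instead give a self-contained ``dual-first'' direct-method argument: restrict the supremum to $P(X)$ via the monotone projection $\phi\mapsto(\phi^c)^c$, extract a maximizer by normalization and Arzel\`a--Ascoli (Lipschitz bounds on $P(X)$ coming from Lemma~\ref{LemmaLips} or, equivalently, from the lift to $P_{\Z^n}(\R^n)$), then derive \eqref{PushForward} from the Euler--Lagrange equation and only afterwards construct the optimal plan $\gamma_*=(\nabla^c\phi_\mu^c\times\mathrm{id})_*dx$ to collapse the duality gap. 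The paper's route is shorter and leans on machinery the reader is presumed to accept; yours is longer but exposes exactly which finite-dimensional convexity facts (a.e.\ uniqueness of the argmax, Danskin/envelope differentiation under the integral) are really doing the work, and it avoids any circular appeal to Corollary~\ref{CorrDXi}, which in the paper is itself derived from this theorem. One small remark: when you justify the envelope step, what you actually need is that the argmax in $\sup_x\{-c(x,y)-\phi_\mu(x)\}$ is a singleton for $dx$-a.e.\ $y$, which after lifting is precisely the a.e.\ differentiability of the convex conjugate $\Phi_\mu^*$; the bound $|\phi_t^c-\phi_0^c|\le|t|\,\|\psi\|_\infty$ (the same estimate as in Lemma~\ref{LemmaXiCont}) then provides the domination needed to interchange $d/dt$ and $\int_X\,dx$, so your sketch is complete modulo spelling this out.
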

\begin{remark}
Equation \ref{KantDualEq} is called Kantorovich' duality \cite{Kantorovich} and property \eqref{PushForward} is the Knott-Smith criterion which, in the context of Monge's problem of optimal transport, was discovered independently by Knott and Smith in 1984 \cite{KnottSmith} and by Brenier in 1987 \cite{Brenier}. 
\end{remark}
\begin{proof}[Proof of Theorem \ref{ThmKantDual}]
The theorem is essentially given by Theorem 5.10 in \cite{VillaniOldNew}. As $X$ is a smooth manifold that can be endowed with a complete metric, $X$ is indeed a Polish space. Further, $d$ is continuous and bounded on $X$. Putting $\gamma'=\mu\times dx$ gives 
$$\inf_{\gamma\in \P(\mu,dx)} I(\gamma) \leq I(\gamma')< \infty$$
hence the assumptions in 5.10.i, 5.10.ii and 5.10.iii in \cite{VillaniOldNew} holds. In particular we get that \eqref{KantDualEq} holds and that there is an optimal transport plan $\gamma\in\Pi(\mu,dx)$ and $\phi_\gamma\in P(X)$ such that $\gamma$ is concentrated on the set
\begin{equation} \{(x,y)\in X\times X: \phi_\gamma(x)+\phi_\gamma^c(y) = -c(x,y) \}. \label{DomGamma} \end{equation}
Let $\phi_\mu=\phi_\gamma$. To see that $\eqref{DualOpt}$ holds, note that, since the first and second marginals of $\gamma$ are $\mu$ and $\nu$ respectively, 
\begin{eqnarray} W^2(\mu,dx) & = & \int_{X\times X} c \gamma = -\int_{X\times X} \left(\phi_\mu(x) + \phi_\mu^c(y)\right) \gamma \nonumber \\
& = & -\int_X \phi_\mu(x)d\mu  -\int_X \phi^c_\mu(y) dx. \nonumber \end{eqnarray}
To see that \eqref{PushForward} holds note that $\phi_\mu^c\in P(X)$ is differentiable almost everywhere with respect to $dx$. Let $A\subset X$ be a measurable set and $\dom\nabla^c\phi_\mu^c\subset X$ be the set where $\phi_\mu^c$ is differentiable. We have 
$$\gamma(X\times \dom \nabla^c\phi_\mu^c)=dx(\dom \nabla^c\phi_\mu^c) = 1.$$ 
As $\gamma$ is concentrated on \eqref{DomGamma} we get that $\gamma$ is concentrated on the set 
$$ \{(x,y): y\in \dom \nabla^c\phi_\mu^c, x=\nabla^c\phi_\mu^c(y) \}. $$
This means
\begin{eqnarray} 
\int_{(\nabla^c\phi_\mu^c)^{-1}(A)} dx & = & \int_{X\times (\nabla^c\phi_\mu^c)^{-1}(A)} d\gamma = \int_{A\times (\nabla^c\phi_\mu^c)^{-1}(A)} d\gamma \nonumber \\
 & = & \int_{A\times X} d\gamma = \int_A d\mu, \nonumber
 \end{eqnarray}
in other words $(\nabla^c\phi_\mu^c)_*dx=\mu$, which proves \eqref{PushForward}.
\end{proof}

\subsection{The Variational Approach to Real Monge-Amp\`ere Equations}
We will now reformulate the statement of Theorem \ref{ThmKantDual} in terms of the Legendre transform and Gateaux differentiability of the functional $\xi$. Recall that if $A$ is a functional on $C(X)$, then the \emph{Legendre} transform of $A$ is a functional on the dual vector space of $C(X)$, the space of finite signed measures on $X$, $\M(X)$. This functional is given by
$$ B(\mu)=\sup_{\phi\in C(X)} \int_Y \phi d\mu - A(\phi). $$
Recall also that if $A$ is convex, then $A$ is Gateaux differentiable at a point $\phi$ and has Gateaux differential $\mu$ if $\mu$ is the unique point in $\M(X)$ such that 
$$ B(\mu) = \int_Y \phi d\mu - A(\phi). $$
A priory $W^2(\cdot,dx)$ is defined on $\M_1(X)$. However, we may extend the definition to all of $\M(X)$ by putting $W(\mu,dx)=+\infty$ for any $\mu\notin \M_1(X)$.
We begin with the following lemma
\begin{lemma}\label{XiConv}
The functional $\xi$ is convex on $C(X)$. Moreover, let
$\phi_0,\phi_1\in C(X)$ and
$$ \phi_t=t\phi_1+(1-t)\phi_0. $$
Then, if $\xi(\phi_t)$ is affine in $t$,
$$ \nabla^c\phi_0^c = \nabla^c\phi_1^c $$
almost everywhere with respect to $dx$. 
\end{lemma}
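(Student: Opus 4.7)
The plan is to split the lemma into its two assertions and handle them in sequence. For convexity of $\xi$, I would use that the $c$-transform admits a sup-of-affine representation in its argument: for each fixed $y \in X$, the map
$$\phi \mapsto \phi^c(y) = \sup_{x\in X}\bigl(-c(x,y)-\phi(x)\bigr)$$
is a supremum of affine functions of $\phi \in C(X)$, hence convex. In particular, for $\phi_t=t\phi_1+(1-t)\phi_0$ I get the pointwise inequality
$$\phi_t^c(y) \;\leq\; t\phi_1^c(y) + (1-t)\phi_0^c(y),$$
and integrating against $dx$ yields convexity of $\xi$.

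For the second assertion, suppose $\xi(\phi_t)$ is affine in $t$. Then the nonnegative function $t\phi_1^c+(1-t)\phi_0^c-\phi_t^c$ integrates to zero against $dx$. Since $c$ is continuous and bounded on the compact space $X\times X$, each $c$-transform is Lipschitz, so this nonnegative continuous function must vanish identically in $y$. Thus for every $y \in X$,
$$\phi_t^c(y) \;=\; t\phi_1^c(y) + (1-t)\phi_0^c(y).$$
By compactness the supremum defining $\phi_t^c(y)$ is attained at some $x_t \in X$, and
$$\phi_t^c(y) = t\bigl(-c(x_t,y)-\phi_1(x_t)\bigr) + (1-t)\bigl(-c(x_t,y)-\phi_0(x_t)\bigr).$$
Each bracket is bounded above by $\phi_i^c(y)$, so the pointwise equality forces both brackets to equal $\phi_i^c(y)$ individually. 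In other words, $x_t$ realises the supremum in the $c$-transform of both $\phi_0$ and $\phi_1$ at $y$.

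The final step, and the only place that needs real content, is to conclude that the maximizer in $\phi_i^c(y)=\sup_x(-c(x,y)-\phi_i(x))$ is unique at any $y$ where $\phi_i^c$ is differentiable, and coincides with $\nabla^c\phi_i^c(y)$. I would deduce this by lifting to $\R^n$ via Lemma~\ref{LemmaLegCom}: writing $\Phi_i(x) = \phi_i(\pi x) + x^2/2$, the identity $\Phi_i^*(y) = \phi_i^c(\pi y) + y^2/2$ transfers the problem to ordinary Legendre duality on $\R^n$, where differentiability of $\Phi_i^*$ at a lift $\tilde y$ of $y$ is well known to imply that the supremum defining $\Phi_i^*(\tilde y)$ is attained at the unique point $\nabla\Phi_i^*(\tilde y)$, which projects to $\nabla^c\phi_i^c(y)$. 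Since $\phi_0^c$ and $\phi_1^c$ are differentiable almost everywhere (being in $P(X)$, by the discussion in Section~\ref{SectPX}), applying this at almost every $y$ gives $x_t = \nabla^c\phi_0^c(y) = \nabla^c\phi_1^c(y)$, which is the desired conclusion. The main (and only nontrivial) obstacle is this uniqueness-of-maximizer step, and it is handled by pushing everything through the correspondence $\phi \mapsto \Phi$ and invoking standard convex analysis on $\R^n$.
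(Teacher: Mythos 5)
Your proof is correct and follows essentially the same route as the paper's: the sup-of-affine representation gives convexity of $\xi$, affineness of $\xi(\phi_t)$ forces the pointwise convex combination identity for the $c$-transforms, a common maximizer for the suprema defining $\phi_0^c(y)$ and $\phi_1^c(y)$ is extracted, and it is identified with both $\nabla^c\phi_i^c(y)$ at points of differentiability. You fill in a couple of steps the paper leaves terse (observing that continuity upgrades the a.e.\ identity to an everywhere identity, and carrying out the $\R^n$-lift to identify a maximizer with the $c$-gradient), and you use a direct weighted-sum argument in place of the paper's device of taking $\nabla^c\phi_{1/2}^c(y)$ as the candidate and arguing that two affine functions of $t$ which agree at an interior point and are ordered must coincide; both variants are valid.
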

\begin{proof}
First of all, for any $y\in X$, the quantity 
\begin{equation} \phi_t^c(y) = \sup_{x\in X} -c(x,y) - \phi_t(x) \label{PhiC} \end{equation}
is a supremum of functions that are affine in $t$, hence it is convex in $t$. This implies $\xi(\phi_t)$ is convex in $t$. 
Now, assume $\xi(\phi_t)$ is affine in $t$. This implies \eqref{PhiC} is affine in $t$ for almost all $y$. Assume $y$ is a point such that $\nabla^c\phi^c_0(y)$,  $\nabla^c\phi_{1/2}^c(y)$ and $\nabla^c\phi^c_1(y)$ are defined and \eqref{PhiC} is affine. Let $x_{1/2}=\nabla^c\phi_{1/2}^c(y)$. This means 
$$ \phi^c_{1/2}(y) = -c(x_{1/2},y) - \phi_{1/2}(x_{1/2}). $$
By construction 
$$ \phi^c_{t}(y) \geq -c(x_{1/2},y) - \phi_{t}(x_{1/2}) $$
for any $t\in [0,1]$. As $\phi^c_{t}$ and $-c(x_{1/2},y) - \phi_{t}(x_{1/2})$ are affine functions (in $t$) that coincide in one point in the interior of their domains, this inequality implies that they coincide. This means $\nabla^c\phi_0^c(y) = \nabla^c\phi_{1/2}^c(y) = \nabla^c\phi_1^c(y)$. 
As $\nabla^c\phi^c_0$, $\nabla^c\phi_{1/2}^c$ and $\nabla^c\phi_1^c$ are defined almost everywhere, this proves the lemma. 
\end{proof}
This allow us to draw the following conclusions from Theorem \ref{ThmKantDual}
\begin{corollary}\label{CorrDXi}
The functional on $\M(X)$ defined by $\mu\mapsto W^2(-\mu,dx)$ is the Legendre transform of $\xi$. Moreover, for any $\mu\in \M_1(X)$ there is $\phi$ such that
$$ W^2(\mu,dx) + \xi(\phi) = -\int_X\phi d\mu. $$
Finally, $\xi$ is Gateaux differentiable on $C(X)$ and  
\begin{equation} d\xi|_\phi = -\MA(\phi). \label{DXi} \end{equation}
\end{corollary}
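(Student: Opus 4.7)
The plan is to deduce all three claims from Kantorovich duality (Theorem~\ref{ThmKantDual}), together with the almost-everywhere differentiability of $\phi^c$ developed in Section~\ref{SectPX} and the convexity of $\xi$ from Lemma~\ref{XiConv}. For the first claim, let $B(\mu) = \sup_{\phi \in C(X)} \int_X \phi \, d\mu - \xi(\phi)$ denote the Legendre transform. When $-\mu \in \M_1(X)$, writing $\nu = -\mu$ gives
$$B(\mu) = \sup_{\phi} \Bigl(-\int_X \phi \, d\nu - \xi(\phi)\Bigr) = W^2(\nu, dx) = W^2(-\mu, dx)$$
by Theorem~\ref{ThmKantDual} (or the more general statement in \cite{VillaniOldNew} if $\nu$ is not absolutely continuous). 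When $-\mu \notin \M_1(X)$, I would verify $B(\mu) = +\infty$ via elementary test-function constructions: using the identity $\xi(\phi + c) = \xi(\phi) - c$ (which follows from $(\phi + c)^c = \phi^c - c$ and the unit total mass of $dx$), one obtains $B(\mu) = +\infty$ whenever $\mu(X) \neq -1$; and if $\mu(X) = -1$ but $\mu$ has a nontrivial positive part, choosing $f \in C(X)$ nonnegative with $\int f \, d\mu > 0$ and using the bound $(tf)^c \leq 0$ (from $c, f \geq 0$) forces $\int tf \, d\mu - \xi(tf) \to +\infty$ as $t \to +\infty$.

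The second claim is immediate from the attainment portion of Theorem~\ref{ThmKantDual}: the maximizer $\phi_\mu \in P(X)$ satisfies $W^2(\mu, dx) = J(\phi_\mu) = -\int \phi_\mu \, d\mu - \xi(\phi_\mu)$. For the Gateaux differentiability, I would compute the directional derivative of $\xi$ at $\phi \in C(X)$ in an arbitrary direction $v \in C(X)$ directly. Fix $y \in X$ and set $g_y(t) = (\phi + tv)^c(y)$; as a supremum of affine functions of $t$, $g_y$ is convex. For almost every $y$, the function $\phi^c$ is differentiable, and the characterization \eqref{EqGradDef2} then identifies $\nabla^c \phi^c(y)$ as the unique maximizer of $-c(\cdot, y) - \phi(\cdot)$; at such $y$, the derivative $g_y'(0)$ exists and equals $-v(\nabla^c \phi^c(y))$. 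Since $|g_y(t) - g_y(0)| \leq |t| \, \|v\|_\infty$, the difference quotients are uniformly bounded, so dominated convergence yields
$$\lim_{t \to 0} \frac{\xi(\phi + tv) - \xi(\phi)}{t} = \int_X g_y'(0) \, dy = -\int_X v(\nabla^c \phi^c(y)) \, dy = -\int_X v \, d\MA(\phi),$$
where the final equality uses Definition~\ref{DefWeakOp}. As this limit is linear and continuous in $v$, $\xi$ is Gateaux differentiable at $\phi$ with $d\xi|_\phi = -\MA(\phi)$.

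The only mildly delicate step is handling the complement of $-\M_1(X)$ in the first claim, which requires the two separate test-function constructions above. The remainder is routine: the second claim is a restatement of Kantorovich attainment, and the third reduces to an envelope-derivative computation that hinges on the correspondence \eqref{EqGradDef2} between the argmax of $-c(\cdot, y) - \phi(\cdot)$ and the $c$-gradient $\nabla^c \phi^c(y)$.
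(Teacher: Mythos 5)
Your proof is correct, and for the Gateaux differentiability of $\xi$ (the third claim) you take a genuinely different and more direct route than the paper. The paper first establishes existence of a subgradient $\mu$ at $\phi$ by invoking the involutivity of the Legendre transform, the compactness of $\M_1(X)$ and the lower semicontinuity of $W^2(\cdot,dx)$ to extract a maximizer of $\mu\mapsto -\int\phi\,d\mu - W^2(\mu,dx)$; it then establishes uniqueness of that subgradient by producing a second potential $\phi_\mu$ from Theorem~\ref{ThmKantDual}, observing that $\xi$ is affine on the segment between $\phi$ and $\phi_\mu$, and applying Lemma~\ref{XiConv} to conclude $\nabla^c\phi^c = \nabla^c\phi_\mu^c$ a.e. You instead compute the directional derivative outright: the pointwise envelope $g_y(t)=(\phi+tv)^c(y)$ is differentiable at $t=0$ for a.e.\ $y$ with $g_y'(0)=-v(\nabla^c\phi^c(y))$, and dominated convergence (using the Lipschitz bound from Lemma~\ref{LemmaXiCont}) lets you integrate. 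This avoids both the duality detour and Lemma~\ref{XiConv} altogether and is arguably cleaner.

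Two technical points are worth spelling out. First, \eqref{EqGradDef2} as derived in Section~\ref{SectPX} concerns $\phi\in P(X)$; for general $\phi\in C(X)$ you should note that the argmax of $-c(\cdot,y)-\phi(\cdot)$ is nonempty by compactness and contained in the argmax of $-c(\cdot,y)-(\phi^c)^c(\cdot)$ (since $(\phi^c)^c\leq\phi$ and the suprema agree), and the latter is the singleton $\{\nabla^c\phi^c(y)\}$ for a.e.\ $y$; this is a one-line fix. Second, the paper's operational definition of Gateaux differentiability is uniqueness of the supporting measure, so you should add the standard remark that a convex functional whose directional derivative is linear in the direction has at most one subgradient, so the two notions coincide. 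For the first claim you are actually more complete than the paper: the constant-shift argument (which is what the paper does) disposes of $\mu(X)\neq -1$, and your additional test-function argument with $tf$, $f\geq 0$, handles $\mu(X)=-1$ with a nontrivial positive part, a case the paper's proof silently omits. Your remark that $\nu\in\M_1(X)$ general requires the version of Kantorovich duality without the absolute-continuity hypothesis is also a refinement the paper glosses over. The second claim (attainment) is handled identically by both.
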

\begin{proof}
The first statement is, as long as $\mu\in \M_1(X)$, a direct consequence of \eqref{KantDualEq}. If $\mu\notin\M_1(X)$ then putting $\phi_C=\phi+C$ for some $\phi\in C(X)$ and $C\in \R$ gives $(\phi_C)^c=\phi^c-C$ and
$$ - \int_X \phi_C d\mu - \xi(\phi_C) = -\int_X \phi d\mu - \xi(\phi) + C(1-\mu(X)). $$
Letting $C\rightarrow \infty$ if $\mu(X)<1$ and $C\rightarrow -\infty$ if $\mu(X)>1$ gives 
$$ \sup_{\phi\in C(X)} \phi d\mu - \xi(\phi) = +\infty, $$
proving the first statement. The second statement is also a direct consequence of Theorem \ref{ThmKantDual}. We will now prove that $\xi$ is Gateaux differentiable and that \eqref{DXi} holds. Let $\phi\in C(X)$. We claim that there is $\mu\in \M(X)$ such that 
\begin{equation} \xi(\phi) + W^2(\mu,dx) = -\int \phi d\mu, \label{DXiEq1}\end{equation}
in other words $\mu$ is a supporting hyperplane of $\xi$ at $\phi$. To see this, note that since $W^2(-\cdot,dx)$ is the Legendre transform of $\xi$ we get that $W^2(\cdot,dx)$ is lower semi-continuous and
\begin{equation} \xi(\phi) + W^2(\mu,dx) \geq -\int \phi d\mu \label{XIWInEq}\end{equation}
for all $\mu\in \M(X)$. By lemma~\ref{XiConv}, $\xi$ is convex on $C(X)$. By the involutive property of Legendre transform
$$ \xi(\phi) = \sup_{\mu\in \M(X)} -\int_X \phi d\mu - W^2(\mu,dx). $$
Let $\{\mu_i\} \subset \M(X)$ be a sequence such that 
$$ -\int_X \phi d\mu_i - W^2(\mu_i,dx) \rightarrow \xi(\phi). $$
We may assume, since $W^2(\mu_i,dx)= \infty$ if $\mu_i\notin \M_1(X)$, that $\mu_i\in \M_1(X)$ for all $i$. Since $\M_1(X)$ is compact we may take a subsequence $\{\mu_{i_k}\}$ converging to some $\mu\in \M_1(X)$. By the lower semi-continuity of $W^2(\cdot,dx)$ we get 
$$  - \int_X\phi\mu -W^2(\mu,dx) \geq \liminf_{k\rightarrow \infty} -\int_X\phi_0 \mu_{i_k} -W^2(\mu_{i_k},dx)  = \xi(\phi_0). $$
which, together with \eqref{XIWInEq}, proves the claim. We will now prove that this implies 
\begin{equation} (\nabla^c\phi^c)_* dx = \mu. \label{DXiEq2} \end{equation} 
As this relation determines $\mu$ we get that $\mu$ must be the unique supporting hyperplane at $\phi$. This implies $\xi$ is Gateaux differentiable at $\phi$ and $d\xi_\phi = \mu$, proving the second statement in the corollary. 

Now, to see that \eqref{DXiEq2} holds, note that \eqref{DXiEq1} implies $W^2(\mu,dx)<\infty$ and hence $\mu\in M_1(X)$. By Theorem \ref{ThmKantDual} there is a function $\phi_\mu\in P(X)$ such that $\MA(\phi_\mu) = \mu$ and 
$$ W^2(\mu,dx) + \xi(\phi_\mu) = -\int \phi_\mu d\mu. $$
This means $\mu$ is a supporting hyperplane of $\xi$ both at $\phi$ and at $\phi_\mu$. This implies $\xi(t\phi + (1-t)\phi_\mu)$ is affine. By Lemma~\ref{XiConv}, $\nabla^c\phi^c$ and $\nabla^c\phi_\mu^c$ coincide almost everywhere with respect to $dx$ and hence \eqref{DXiEq2} holds. 
\end{proof}




\section{A Large Deviation Principle}\label{SectionLDP}
This section is devoted to Theorem~\ref{MainThmLDP} which will be the key part in the proof of Theorem \ref{MainThmGen}. Before we state Theorem~\ref{MainThmLDP} we will recall the definition of the relative entropy function. 
\begin{definition}
Assume $\mu,\mu_0\in \M(X)$ and, if $\mu$ is absolutely continuous with respect to $\mu$, let $\mu/\mu_0$ denote the density of $\mu$ with respect to $\mu_0$. The \emph{relative entropy} of $\mu$ with respect to $\mu_0$ is  
$$ Ent_{\mu_0}(\mu) = \begin{cases} \int_X \mu \log \frac{\mu}{\mu_0} & \textnormal{if $\mu$ is a probability measure and absolutely} \\
& \textnormal{continuous with respect to $\mu_0$} \\
+\infty & \textnormal{otherwise,} \end{cases} $$
\end{definition}
We recall the basic property that $Ent_{\mu_0}(\mu)\geq 0$ with equality if and only if $\mu=\mu_0$.
\begin{theorem}\label{MainThmLDP}
Let $\mu_0\in \mathcal{M}_1(X)$ be absolutely continuous and have positive density with respect to $dx$. Let $\beta \in \R $. Assume $\Gamma_\beta^{(N)}$ is defined as in section~\ref{SectPointProc}. Then
$$ \left\{\Gamma^{(N)}_\beta\right\} $$
satisfy a \emph{Large Deviation Principle} with rate $r_N=N$ and rate function 
$$ G(\mu) = \beta W^2(\mu,dx) + Ent_{\mu_0}(\mu) + C_{\mu_0, \beta} $$
where $W^2(\mu,dx)$ is the squared Wasserstein 2-distance between $dx$ and $\mu_0$ (defined in the previous section) and $C_{\mu_0,\beta}$ is a constant ensuring $\inf_{\M_1(X)} G = 0$.
\end{theorem}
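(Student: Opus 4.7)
The plan is to follow the two-step program outlined in Section~\ref{SectOutline}. Writing the density of $\mu^{(N)}_\beta$ with respect to $\mu_0^{\otimes N}$ as
$$\frac{d\mu^{(N)}_\beta}{d\mu_0^{\otimes N}}(x) \;=\; \frac{e^{-N\beta\,\E^{(N)}(x)}}{Z_{\beta,N}}, \qquad \E^{(N)}(x) \;:=\; -\frac{1}{Nk}\log\perm\bigl(\Psi^{(N)}_{p_i}(x_j)\bigr),$$
the goal is to show that $\E^{(N)}$ behaves, as $N\to\infty$, like a continuous macroscopic functional of the empirical measure $\delta^{(N)}(x)$, and that this macroscopic limit equals $W^2(\cdot,dx)$. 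Sanov's theorem for $\mu_0^{\otimes N}$ supplies the entropy $Ent_{\mu_0}$ at rate $N$, and a Laplace/Varadhan tilting argument then delivers the LDP at rate $N$ with rate function $\beta W^2(\cdot,dx)+Ent_{\mu_0}(\cdot)+C_{\mu_0,\beta}$.

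First I would carry out the zero-temperature step (Section~\ref{SectBetaInf}): fix $\beta_N\to\infty$ and establish an LDP for $\{\Gamma^{(N)}_{\beta_N}\}$ at rate $r_N=N\beta_N$ with rate function $W^2(\cdot,dx)-\inf W^2(\cdot,dx)$. For this I apply the G\"artner--Ellis theorem, computing
$$\Lambda_N(\phi) \;:=\; \frac{1}{r_N}\log\int_{\M_1(X)} e^{r_N\int_X\phi\, d\mu}\,d\Gamma^{(N)}_{\beta_N}(\mu)$$
for $\phi\in C(X)$, and showing that it converges to a concave functional of $\phi$ whose Legendre transform, by Corollary~\ref{CorrDXi}, is exactly $W^2(-\cdot,dx)$ up to a constant. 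Concretely, after exponential tilting by $\phi$ the permanent decomposes, via its sum-over-permutations structure, as a sum over $\sigma$ of products of single-site integrals, which can be handled by the Laplace method; the dominant contribution is picked out by the Kantorovich dual functional $\xi$ from Corollary~\ref{CorrDXi}, giving the required identification.

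The key technical input is uniform two-sided control of $\E^{(N)}$. Using the gaussian decay $\Psi^{(N)}_p(x)\asymp e^{-k\,d(x,p)^2/2}$, uniform in $k$ and $p$, one sandwiches
$$\frac{1}{N}\min_{\sigma}\sum_i \frac{d(x_{\sigma(i)},p_i)^2}{2} - O\!\left(\tfrac{\log N}{Nk}\right) \;\leq\; \E^{(N)}(x) \;\leq\; \frac{1}{N}\min_{\sigma}\sum_i \frac{d(x_{\sigma(i)},p_i)^2}{2} + O\!\left(\tfrac{1}{k}\right),$$
using $\perm(a_{ij})\geq\max_\sigma\prod_i a_{i,\sigma(i)}$ on one side and $\perm(a_{ij})\leq N!\max_\sigma\prod_i a_{i,\sigma(i)}$ on the other; both errors vanish because $N=k^n$ forces $\tfrac{\log N!}{Nk}=\tfrac{n\log k+O(1)}{k}\to 0$. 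The remaining $\min_\sigma$ is a discrete optimal transport cost between $\delta^{(N)}(x)$ and the uniform measure on the lattice $\tfrac{1}{k}\Z^n/\Z^n$; as this lattice equidistributes to $dx$, the cost converges uniformly to $W^2(\delta^{(N)}(x),dx)$, while Lipschitz control of $\log\Psi_p^{(N)}$ in $x$ yields the required equicontinuity of $\E^{(N)}$.

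For general $\beta$ (Section~\ref{SectGenCase}), the above identification of $\E^{(N)}$ as an asymptotically continuous bounded function of the empirical measure combines with Sanov's theorem through a standard Laplace principle, producing the rate function $\beta W^2(\cdot,dx)+Ent_{\mu_0}(\cdot)+C_{\mu_0,\beta}$. The main obstacle I anticipate is precisely the uniform sandwich estimate: verifying that the combinatorial entropy $\log N!$ is genuinely negligible at the relevant scale, and that the discrete optimal transport cost converges to $W^2$ with quantitative control strong enough to survive both the G\"artner--Ellis computation at infinite temperature and the subsequent tilting argument at finite $\beta$.
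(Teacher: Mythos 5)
Your plan reproduces the paper's two-step strategy (G\"artner--Ellis for the zero-temperature LDP at rate $kN$, followed by a reduction to rate $N$ via Sanov and a tilting argument), and the main ingredients you identify — the factorization of the moment generating function through the sum-over-permutations structure, the Laplace method driven by the Gaussian decay $\Psi^{(N)}_p(x)\asymp e^{-k\,d(x,p)^2/2}$, Kantorovich duality via Corollary~\ref{CorrDXi}, Stirling's bound on $\log N!/(Nk)$, and equicontinuity of the normalized Hamiltonians — are exactly the ones the paper uses. The one genuine difference is structural: you propose to establish \emph{directly}, by the sandwich $\max_\sigma\prod\leq\perm\leq N!\max_\sigma\prod$ together with the identification of $\min_\sigma\frac{1}{N}\sum d(x_{\sigma(i)},p_i)^2/2$ as the discrete transport cost $W^2(\delta^{(N)}(x),\delta^{(N)}(p))$, that $\sup_{X^N}\bigl|\E^{(N)}-W^2(\cdot,dx)\circ\delta^{(N)}\bigr|\to 0$. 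The paper never proves this directly; it only extracts from the sandwich the uniform boundedness and equicontinuity of $\{\E^{(N)}\}$, then obtains the uniform macroscopic limit via Arzel\`a--Ascoli and matches that limit to the zero-temperature rate function (Lemma~\ref{LemmaHamiltonianConv} and the proof of Theorem~\ref{ThmRedToZeroTemp}). Your direct route works but needs an extra step you only assert: one must check that $W^2(\delta^{(N)}(x),\delta^{(N)}(p))\to W^2(\delta^{(N)}(x),dx)$ uniformly over $x\in X^N$ as $N\to\infty$, which follows from the triangle inequality for $W_2$ and boundedness of the diameter (giving an error $\leq 2\operatorname{diam}(X)\,W_2(\delta^{(N)}(p),dx)\to 0$). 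If you carry this out, the zero-temperature G\"artner--Ellis step becomes redundant, since Proposition~\ref{TheoremThmRedToZeroTempHamConv} applies directly; as written your plan does both, which is harmless but duplicative. Two minor slips: the limiting functional $\xi(-\phi)$ appearing in the G\"artner--Ellis computation is \emph{convex}, not concave; and your error exponents in the sandwich are slightly off (the dominant error is $O(\log k/k)$, not $O(\log N/(Nk))$), though both vanish and the conclusion stands.
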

Before we move on we will recall the definition of a Large Deviation Principle. 
\begin{definition}\label{DefLDP}
Let $\chi$ be a topological space, $\{\Gamma_N\}$ a sequence of probability measures on $\chi$, $G$ a lower semi continuous function on $\chi$ and $r_N$ a sequence of numbers such that $r_N \rightarrow \infty$. Then $\{\Gamma_N\}$ satisfies a \emph{large deviation principle} with rate function $G$ and rate $r_N$ if, for all measurable $E \subset \chi$,
$$ -\inf_{E^\circ} G \leq \liminf_{N\rightarrow \infty} \frac{1}{r_N}\log \Gamma_N(E) \leq \limsup_{N\rightarrow \infty} \frac{1}{r_N} \log \Gamma_N(E)\leq -\inf_{\bar E} G $$
where $E^\circ$ and $\bar E$ are the interior and the closure of $E$. 
\end{definition}
In our case $\chi=\M_1(X)$. As we may endow $\M_1(X)$ with the Wasserstein 1-metric, metricizing the topology of weak* convergence on $\chi$, we may think of $\M_1(X)$ as a metric space. Further, by Prohorov's Theorem, $\M_1(X)$ is compact. In this setting there is an alternative, and well known, criteria for when a large deviation principle exist.
\begin{lemma}\label{LemmaLDP}
Let $\chi$ be a compact metric space, $\{\Gamma_N\}$ a sequence of probability measures on $\chi$, $G$ a function on $\chi$ and $r_N$ a sequence of numbers such that $r_N \rightarrow \infty$. Let $B_d(\mu)$ denote the open ball in $\chi$ with center $\mu$ and radius $d$. Then $\{\Gamma_N\}$ satisfies a \emph{large deviation principle} with rate function $G$ and rate $r_N$ if and only if, for all $\mu\in \chi$
\begin{eqnarray}
G(\mu) & = & \lim_{\delta\rightarrow 0} \limsup_{N\rightarrow \infty} -\frac{1}{r_N} \log \Gamma_N(B_\delta(\mu)) \nonumber \\
& = & \lim_{\delta\rightarrow 0} \liminf_{N\rightarrow \infty} -\frac{1}{r_N} \log \Gamma_N(B_\delta(\mu)) \nonumber 
\end{eqnarray}
\end{lemma}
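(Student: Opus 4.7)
The plan is to treat the two implications separately, with the forward direction being essentially bookkeeping and the reverse direction needing a compactness argument.

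For the forward direction, I assume the LDP holds and fix $\mu\in\chi$. Applying the upper bound to the closed set $\bar B_\delta(\mu)$ gives
$$\liminf_{N\to\infty}-\frac{1}{r_N}\log\Gamma_N(B_\delta(\mu))\geq\liminf_{N\to\infty}-\frac{1}{r_N}\log\Gamma_N(\bar B_\delta(\mu))\geq\inf_{\bar B_\delta(\mu)}G,$$
while applying the lower bound to the open set $B_\delta(\mu)$ gives
$$\limsup_{N\to\infty}-\frac{1}{r_N}\log\Gamma_N(B_\delta(\mu))\leq\inf_{B_\delta(\mu)}G\leq G(\mu).$$
Letting $\delta\to 0$ in the first inequality and using the lower semicontinuity of $G$ shows $\liminf_\delta\inf_{\bar B_\delta(\mu)}G=G(\mu)$, which pinches both limits to $G(\mu)$.

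For the reverse direction, I first note that the function $G$ defined by the ball formula is automatically lower semicontinuous: indeed, $\delta\mapsto\limsup_N-\frac{1}{r_N}\log\Gamma_N(B_\delta(\mu))$ is non-increasing in $\delta$, and a standard triangle-inequality argument on balls shows that if $\mu_n\to\mu$ then the limit at $\mu$ is bounded below by any limit at $\mu_n$. Next, for the LDP lower bound, given an open $E$ and $\mu\in E$, I pick $\delta$ with $B_\delta(\mu)\subset E$ and estimate
$$\liminf_{N\to\infty}\frac{1}{r_N}\log\Gamma_N(E)\geq\liminf_{N\to\infty}\frac{1}{r_N}\log\Gamma_N(B_\delta(\mu))\geq -G(\mu)+o_\delta(1),$$
then take the supremum over $\mu\in E$.

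The main work is the LDP upper bound for a closed (hence, by compactness of $\chi$, compact) set $F$. Given $\varepsilon>0$, for each $\mu\in F$ the ball formula gives $\delta_\mu>0$ with
$$\limsup_{N\to\infty}\frac{1}{r_N}\log\Gamma_N(B_{\delta_\mu}(\mu))\leq -G(\mu)+\varepsilon\leq -\inf_F G+\varepsilon.$$
By compactness of $F$, finitely many such balls $B_{\delta_{\mu_1}}(\mu_1),\ldots,B_{\delta_{\mu_m}}(\mu_m)$ cover $F$, and subadditivity gives
$$\frac{1}{r_N}\log\Gamma_N(F)\leq\frac{\log m}{r_N}+\max_i\frac{1}{r_N}\log\Gamma_N(B_{\delta_{\mu_i}}(\mu_i)).$$
Taking $\limsup_N$, then $\varepsilon\to 0$, yields the upper bound. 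The main obstacle here is really a conceptual one rather than a technical one: one must verify that the balls may be chosen uniformly enough in $\mu$ that the finite cover argument works, which is exactly what compactness of $\chi$ (and hence of $F$) provides for free. Everything else reduces to elementary manipulations of $\log$ and $\limsup/\liminf$.
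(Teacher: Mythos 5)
Your proof is correct, but it takes a genuinely different route from the paper's. The paper proves Lemma~\ref{LemmaLDP} essentially by citation: it invokes Theorems~4.1.11, 4.1.18 and Lemma~1.2.18 of Dembo--Zeitouni, which characterize an LDP on a compact space in terms of a base of the topology, and then checks that the formula over the base $\mathcal{B}$ of all balls collapses to the formula with $\lim_{\delta\rightarrow 0}$ over balls centred at $\mu$ (using monotonicity in $\delta$ and the fact that any $B\in\mathcal{B}$ containing $\mu$ contains some $B_\delta(\mu)$). Your argument, by contrast, is self-contained: you reprove the two halves of the ball characterization directly, with the only nontrivial step being the upper bound for closed sets via a finite-cover argument using compactness of $\chi$. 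This is exactly the content hidden inside the textbook results (the ``weak LDP'' upper bound plus exponential tightness, the latter automatic here), so the two proofs have the same mathematical substance; yours is just unpacked. Two small points worth tightening: (i) in your lower-bound step the ball formula actually gives $\limsup_N -\tfrac{1}{r_N}\log\Gamma_N(B_\delta(\mu))\leq G(\mu)$ for \emph{every} $\delta>0$ (since the quantity increases as $\delta\downarrow 0$), so the $o_\delta(1)$ is unnecessary; (ii) in the closed-set upper bound, when $\inf_F G=+\infty$ the $\varepsilon$-argument should be replaced by: for any $M$, choose each $\delta_\mu$ so that the $\liminf$ over the ball exceeds $M$, then the finite cover gives $\limsup_N\tfrac{1}{r_N}\log\Gamma_N(F)\leq -M$, and $M$ is arbitrary.
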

\begin{proof}
Let $\mathcal{B}$ be the basis of the topology on $\chi$ given by
$$ \mathcal{B} = \{B_d(\mu): d>0, \mu\in \chi\}. $$
By Theorem 4.1.11, Theorem 4.1.18 and Lemma 1.2.18 (recall that $\chi$ is compact by assumption) in \cite{DemboZeitouni}, $\{\Gamma_N\}$ satisfies a large deviation principle with rate function $G$ and rate $r_N$ if and only if
\begin{eqnarray}
G(\mu) & = & \sup_{B\in \mathcal{B} : \mu\in B} \limsup_{N\rightarrow \infty} -\frac{1}{r_N} \log \Gamma_N(B_\delta(\mu)) \nonumber \\
& = & \sup_{B\in \mathcal{B}: \mu\in B} \liminf_{N\rightarrow \infty} -\frac{1}{r_N} \log \Gamma_N(B_\delta(\mu)). \nonumber 
\end{eqnarray}
Now, if $\mu\in B\in \mathcal{B}$ then $B_d(\mu)\subset B$ for $d$ small enough. This means, since
\begin{equation} \lim_{d\rightarrow 0} \limsup_{N\rightarrow \infty} -\frac{1}{r_N} \log \Gamma_N(B_\delta(\mu)) \label{LDPEquiv1} \end{equation}
is increasing as $d\rightarrow 0$, that
\begin{equation}  \eqref{LDPEquiv1} \geq \sup_{B\in \mathcal{B}: \mu\in B} \limsup_{N\rightarrow \infty} -\frac{1}{r_N} \log \Gamma_N(B_\delta(\mu)). \label{LDPEquiv2} \end{equation}
Since, for any $d>0$, $B_d(\mu)$ is a candidate for the supremum in the right hand side of \eqref{LDPEquiv2} we get that equality must hold in \eqref{LDPEquiv2}. The same argument goes through with $\limsup$ replaced by $\liminf$. This proves the lemma.
\end{proof}

Finally we recall the well known
\begin{theorem}[Sanov's theorem, see for example 6.2.10 in \cite{DemboZeitouni}]
Let $\mu_0\in \M_1(X)$. Then the family
$$ \left\{\left(\delta^{(N)}\right)_* \mu_0^{\otimes N}\right\} $$
satisfies a large deviation principle with rate $r_N=N$ and rate function $Ent_{\mu_0}$. 
\end{theorem}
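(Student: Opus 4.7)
The plan is to follow the classical proof of Sanov's theorem on a compact Polish space, using the independent product structure of $\mu_0^{\otimes N}$ together with the Donsker--Varadhan variational characterization of relative entropy. Since $\M_1(X)$ is compact and metrizable in the weak* topology, Lemma~\ref{LemmaLDP} reduces the task to pinning down the logarithmic asymptotics of $\Gamma_N(B_\delta(\mu))$ for each $\mu \in \M_1(X)$, where $\Gamma_N = (\delta^{(N)})_*\mu_0^{\otimes N}$.

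First I would compute the limiting log-Laplace functional: by independence, for any $f \in C(X)$,
$$\Lambda(f) := \lim_N \frac{1}{N}\log \int_{X^N} e^{N\langle f,\delta^{(N)}\rangle}\, d\mu_0^{\otimes N} = \log \int_X e^f\, d\mu_0,$$
the equality being exact at each finite $N$ since the integrand factorizes. Its Legendre transform $\Lambda^*(\mu) = \sup_{f \in C(X)}[\int f\, d\mu - \Lambda(f)]$ coincides with $Ent_{\mu_0}(\mu)$ by the Donsker--Varadhan identity: $\Lambda^* \leq Ent_{\mu_0}$ follows from Jensen's inequality applied to the tilted probability density $e^f/\int e^f d\mu_0$, and the reverse inequality comes from plugging in (a bounded approximation of) $f = \log(d\mu/d\mu_0)$ when $\mu \ll \mu_0$, and from indicators $M\mathbf{1}_A$ of $\mu_0$-null sets charged by $\mu$ (letting $M\to\infty$) otherwise.

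The upper bound on closed sets is then standard exponential Chebyshev: for any closed $F \subset \M_1(X)$ and any $f \in C(X)$,
$$\Gamma_N(F) \leq e^{-N \inf_{\mu \in F}[\int f\, d\mu - \Lambda(f)]},$$
and optimizing over $f$ produces $\limsup -\frac{1}{N}\log \Gamma_N(F) \geq \inf_F \Lambda^* = \inf_F Ent_{\mu_0}$. For the matching lower bound, fix $\mu$ with $Ent_{\mu_0}(\mu) < \infty$ and density $\rho = d\mu/d\mu_0$; a change of measure gives
$$\Gamma_N(B_\delta(\mu)) = \int_{(\delta^{(N)})^{-1}(B_\delta(\mu))} \prod_{i=1}^N \rho(x_i)^{-1}\, d\mu^{\otimes N}.$$
Under $\mu^{\otimes N}$ the empirical measure converges in probability to $\mu$ by the ordinary law of large numbers, so the integration region carries $\mu^{\otimes N}$-mass tending to one, while $\frac{1}{N}\sum_i \log \rho(x_i) \to \int \log \rho\, d\mu = Ent_{\mu_0}(\mu)$ in probability. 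Combining these via Jensen's inequality applied to $\log$ yields $\liminf_N \frac{1}{N}\log \Gamma_N(B_\delta(\mu)) \geq -Ent_{\mu_0}(\mu)$, matching the upper bound.

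The main obstacle lies in the lower bound, where $\log\rho$ may fail to be bounded; the standard remedy is to truncate $\rho$ away from $0$ and $\infty$, apply the tilting argument to the truncated density, and then approximate a general finite-entropy $\mu$ by the corresponding renormalized measures, using lower semi-continuity of $Ent_{\mu_0}$ to pass to the limit. The case $Ent_{\mu_0}(\mu) = \infty$ is vacuous for the lower bound and follows automatically from the variational formula on the upper side, so the two estimates together verify the criterion of Lemma~\ref{LemmaLDP} with rate $r_N=N$ and rate function $Ent_{\mu_0}$.
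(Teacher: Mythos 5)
The paper does not actually prove this statement: it is quoted as a black box with a citation to Dembo--Zeitouni (Theorem 6.2.10), so there is no in-text argument to compare against. Your proposal is the standard Cram\'er-style proof of Sanov's theorem via the moment generating functional, and it is essentially correct, but one step deserves more care. In the upper bound you write that, for any closed $F$ and any $f\in C(X)$,
$$\Gamma_N(F)\leq e^{-N\inf_{\mu\in F}\left[\int f\,d\mu-\Lambda(f)\right]},$$
and then claim that ``optimizing over $f$'' yields $\limsup -\tfrac1N\log\Gamma_N(F)\geq\inf_F\Lambda^*$. As stated this requires interchanging $\sup_f$ with $\inf_{\mu\in F}$, which is not free: it is the hard direction of a minimax inequality and in general needs a compactness/covering argument (or a minimax theorem). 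You do not need it in that generality, though. Since you have already invoked Lemma~\ref{LemmaLDP} to reduce the LDP to logarithmic asymptotics of $\Gamma_N(B_\delta(\mu))$, you should run the exponential Chebyshev bound only over the ball $B_\delta(\mu)$: for fixed $f$ the exponent $\inf_{\nu\in B_\delta(\mu)}\left[\int f\,d\nu-\Lambda(f)\right]$ tends to $\int f\,d\mu-\Lambda(f)$ as $\delta\to 0$ by uniform continuity of $\nu\mapsto\int f\,d\nu$, and only then do you take the supremum over $f$ to recover $\Lambda^*(\mu)=Ent_{\mu_0}(\mu)$. This avoids the interchange entirely and matches the ball-based criterion you set up. With that correction the argument is sound: the identity $\Lambda^*=Ent_{\mu_0}$ via Donsker--Varadhan, the change-of-measure lower bound with Jensen applied to the conditional law on $(\delta^{(N)})^{-1}(B_\delta(\mu))$, and the truncation/lower-semicontinuity step for unbounded densities are all standard and correctly deployed.
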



\subsection{The Zero Temperature Case and the G\"artner-Theorem}\label{SectBetaInf}
Recall that $N=k^n$. For each $\beta\in \R$ we get a family of probability measures $\{\Gamma^{(N)}_\beta\}_{k\in \N}$. Theorem~\ref{MainThmGen} and Theorem~\ref{MainThmLDP} are both concerned with the behavior of these families. In this section we will consider the family $\{\Gamma^{(N)}_k\}_{k\in \N}$. We will prove a large deviation principle for this family (see Theorem~\ref{ThmBetaInf}) which, in Section~\ref{SectGenCase}, will be used to prove Theorem \ref{MainThmLDP}. 
\begin{theorem}\label{ThmBetaInf}
Let $\mu_0\in \mathcal{M}_1(X)$ be absolutely continuous and have positive density with respect to $dx$. Assume $\Gamma_\beta^{(N)}$ is defined as in section~\ref{SectPointProc}. Then
$$ \{\Gamma^{(N)}_k\} $$
satisfies a large deviation principle with rate $r_N=kN$ and rate function 
$$ G(\mu) = W^2(\cdot,dx). $$
\end{theorem}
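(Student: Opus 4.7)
The plan is to apply the G\"artner-Ellis theorem to the sequence $\{\Gamma_k^{(N)}\}$ on the compact space $\M_1(X)$. The strategy proceeds by computing the limit of the normalised logarithmic moment generating function
$$L_N(\phi) = \frac{1}{kN}\log \int_{X^N} e^{k\sum_{j=1}^N \phi(x_j)}\,d\mu^{(N)}_k(x), \qquad \phi\in C(X),$$
verifying its Gateaux differentiability, and identifying its Legendre transform as $W^2(\cdot,dx)$ via Corollary~\ref{CorrDXi}.

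The first step is a direct computation of $L_N(\phi)$. Expanding $\perm(\Psi_{p_i}^{(N)}(x_j)) = \sum_\sigma \prod_i \Psi_{p_i}^{(N)}(x_{\sigma(i)})$ and relabelling the variables of integration permutation by permutation, the symmetry of $d\mu_0^{\otimes N}$ implies that every permutation contributes equally; hence
$$\int_{X^N} e^{k\sum_j \phi(x_j)}\perm(\Psi_{p_i}^{(N)}(x_j))\,d\mu_0^{\otimes N} = N!\prod_p \int_X \Psi_p^{(N)} e^{k\phi}\,d\mu_0,$$
and setting $\phi=0$ identifies $Z_{k,N}$. Dividing produces the product formula
$$L_N(\phi) = \frac{1}{N}\sum_p \frac{1}{k}\log\frac{\int_X \Psi_p^{(N)} e^{k\phi}\,d\mu_0}{\int_X \Psi_p^{(N)}\,d\mu_0}.$$

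The second step is a Laplace / Riemann-sum analysis. Unfolding the lattice sum defining $\Psi_p^{(N)}$ to $\R^n$ gives $\int_X \Psi_p^{(N)} e^{k\phi}\,d\mu_0 = \int_{\R^n} e^{k(\phi(\pi x)-|x-p|^2/2)}\,(\tilde\mu_0\circ\pi)(x)\,dx$, where $\tilde\mu_0$ is the smooth strictly positive density of $\mu_0$. The classical Laplace method (leveraging smoothness and strict positivity of $\tilde\mu_0$) then yields
$$\frac{1}{k}\log \int_X \Psi_p^{(N)} e^{k\phi}\,d\mu_0 \longrightarrow \sup_{y\in X}\left[\phi(y) - \frac{d(y,p)^2}{2}\right] = (-\phi)^c(p)$$
as $k\to\infty$, uniformly in $p\in X$, while the denominator tends uniformly to $0$. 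Since $\{p\} = \frac{1}{k}\Z^n/\Z^n$ is the uniform lattice of $N=k^n$ points on $X$ and $(-\phi)^c$ is continuous, the Riemann sum converges and
$$L_N(\phi)\longrightarrow L(\phi) := \int_X (-\phi)^c\,dp = \xi(-\phi).$$

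The third step invokes G\"artner-Ellis. By Lemma~\ref{XiConv} and Corollary~\ref{CorrDXi}, $\xi$ is convex and Gateaux differentiable on $C(X)$, hence so is $L$; combined with the compactness of $\M_1(X)$ this justifies the topological G\"artner-Ellis theorem, giving an LDP at rate $kN$ with rate function $L^*(\mu) = \sup_{\phi\in C(X)}\left[\int_X \phi\,d\mu - \xi(-\phi)\right]$. The substitution $\psi = -\phi$ rewrites this as $\sup_\psi\left[\int\psi\,d(-\mu) - \xi(\psi)\right]$, and by the first statement of Corollary~\ref{CorrDXi} this equals $W^2(-(-\mu),dx) = W^2(\mu,dx)$. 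The main obstacle I expect is establishing the Laplace asymptotics uniformly in the lattice parameter $p$ with errors small enough that the sum $\frac{1}{N}\sum_p$ converges at leading order; a secondary but routine point is verifying the exposed-hyperplane hypothesis of G\"artner-Ellis at every $\mu\in\M_1(X)$, which is underwritten by the Gateaux differentiability of $\xi$ furnished by Corollary~\ref{CorrDXi}.
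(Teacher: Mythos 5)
Your proposal follows the paper's proof essentially step for step: the permutation-symmetry computation reducing the moment generating function to a product over the lattice points $p\in\frac{1}{k}\Z^n/\Z^n$, the Laplace-type uniform limit $\frac{1}{k}\log\int_X \Psi_p^{(N)}e^{k\phi}\,d\mu_0 \to (-\phi)^c(p)$, the Riemann-sum convergence to $\xi(-\phi)$, and the appeal to G\"artner-Ellis using the Gateaux differentiability of $\xi$ from Corollary~\ref{CorrDXi}. The uniformity in $p$ that you flag as the anticipated obstacle is precisely what the paper's Lemma~\ref{LemmaHNXi} supplies: it shows $-\frac{1}{k}\log\Psi^{(N)}_p \to d(\cdot,p)^2/2$ uniformly in both $x$ and $p$, whence the family is equicontinuous and the Laplace bound can be made uniform with a $p$-independent constant coming from the full support and positive density of $\mu_0$.
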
 
Recall that if $\Gamma$ is a probability measure on a topological vector space $\chi$, then the moment generating function of $\Gamma$ is the functional on the dual vector space $\chi^*$ given by
$$ Z_\Gamma(\phi) = \int_\chi e^{-\bracket{\phi}{\mu}}d\Gamma(\mu) $$  
where $\bracket{\cdot}{\cdot}$ is the pairing of $\chi$ and $\chi^*$.
The significance of this for our purposes lies in the G\"artner-Ellis theorem. Before we state this theorem, recall that a sequence of (Borel) probability measures $\{\Gamma_N\}$ on a space $\chi$ is \emph{exponentially tight} if for each $\epsilon\in\R$ there is a compact $K_\epsilon\subseteq \chi$ such that for all $N$
\begin{equation} \limsup_{\N\rightarrow \infty} \frac{1}{N}\log \Gamma_N(\chi\setminus K_\epsilon) \leq \epsilon. \label{ExpTight} \end{equation}
In our case, when $\chi$ is compact, this is automatically satisfied since choosing $K_\epsilon=\chi$ for any $\epsilon$ gives that the left hand side of \eqref{ExpTight} is $-\infty$ for all $N$. 
\begin{theorem}[The G\"artner-Ellis Theorem. See for example Corollary 4.5.27 in \cite{DemboZeitouni}]
Let $\chi$ be a locally convex topological vector space, $\{\Gamma_N\}$ an exponentially tight sequence of probability measures on $\chi$ and $r_N$ a sequence such that $r_N\rightarrow \infty$. Let $Z_{\Gamma_N}$ be the moment generating function of $\Gamma_N$ and assume
$$ F(\phi) = \lim_{N\rightarrow \infty} \frac{1}{r_N}\log Z_{\Gamma_N}(r_N \phi) $$
exist, is finite valued, lower semi continuous and Gateaux differentiable. Then $\Gamma_N$ satisfies a large deviation principle with rate $r_N$ and rate function given by the Legendre transform of $F$. 
\end{theorem}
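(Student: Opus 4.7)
The plan is to establish the two halves of the LDP separately and assemble them via the local characterization in Lemma~\ref{LemmaLDP}. The upper bound follows from an exponential Chebyshev inequality applied uniformly over the dual, while the matching lower bound comes from an exponential change of measure (tilting) whose concentration point is pinned down precisely by the Gateaux differentiability of $F$.

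For the upper bound, I would begin from the one-line estimate
\[
\Gamma_N(A) \;\leq\; e^{r_N\sup_A\langle\phi,\cdot\rangle}\,Z_{\Gamma_N}(r_N\phi),
\]
valid for any $\phi\in\chi^*$ and any measurable $A\subseteq\chi$ (insert $e^{r_N\langle\phi,\mu\rangle}\cdot e^{-r_N\langle\phi,\mu\rangle}$ under the integral and bound the first factor by its supremum on $A$). Taking $\frac{1}{r_N}\log$, sending $N\to\infty$, specializing $A=B_\delta(\mu)$, and letting $\delta\to 0$ yields
\[
\lim_{\delta\to 0}\limsup_N\tfrac{1}{r_N}\log\Gamma_N(B_\delta(\mu))\;\leq\;\langle\phi,\mu\rangle + F(\phi).
\]
Optimizing the right-hand side over $\phi$ produces $-G(\mu)$, where $G$ is the Legendre transform of $F$. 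The extension from small balls to arbitrary closed sets is a standard finite-cover reduction combined with the exponential tightness hypothesis, which takes care of the complement of any large compact on the exponential scale.

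For the lower bound, fix $\mu_0$ in the effective domain of $G$ and use Gateaux differentiability to choose $\phi_0\in\chi^*$ saturating the Fenchel--Young relation $G(\mu_0)+F(\phi_0)=\langle\phi_0,\mu_0\rangle$, equivalently $\mu_0=dF|_{\phi_0}$. Form the tilted probability measures
\[
d\tilde\Gamma_{N,\phi_0}(\mu)\;=\;\frac{e^{-r_N\langle\phi_0,\mu\rangle}}{Z_{\Gamma_N}(r_N\phi_0)}\,d\Gamma_N(\mu).
\]
Running the Chebyshev argument in reverse (bounding $\langle\phi_0,\mu\rangle$ from below by its infimum on $B_\delta(\mu_0)$) gives
\[
\tfrac{1}{r_N}\log\Gamma_N(B_\delta(\mu_0))\;\geq\;\inf_{B_\delta(\mu_0)}\langle\phi_0,\cdot\rangle + \tfrac{1}{r_N}\log Z_{\Gamma_N}(r_N\phi_0)+\tfrac{1}{r_N}\log\tilde\Gamma_{N,\phi_0}(B_\delta(\mu_0)),
\]
so the sought bound $\liminf\geq -G(\mu_0)$ reduces to showing $\tilde\Gamma_{N,\phi_0}(B_\delta(\mu_0))\to 1$ for every $\delta>0$.

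The concentration of the tilted family is the main obstacle, and it is exactly where the Gateaux differentiability is indispensable. A direct computation shows that the limiting log-moment generating function of $\{\tilde\Gamma_{N,\phi_0}\}$ is the translate $\tilde F(\phi)=F(\phi+\phi_0)-F(\phi_0)$, whose Legendre transform works out to $\tilde G(\mu)=G(\mu)+F(\phi_0)-\langle\phi_0,\mu\rangle\geq 0$. The zero set of $\tilde G$ coincides with the Fenchel--Young saturation set $\partial F(\phi_0)$, and Gateaux differentiability forces $\partial F(\phi_0)=\{\mu_0\}$. Re-applying the already established upper bound to the tilted family then shows that $\tilde\Gamma_{N,\phi_0}$ places asymptotically no mass on any closed set avoiding $\mu_0$, giving the required concentration and completing the proof.
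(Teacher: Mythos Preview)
The paper does not supply its own proof of the G\"artner--Ellis theorem; it is quoted with a reference to Corollary~4.5.27 in Dembo--Zeitouni and then used as a black box in the proof of Theorem~\ref{ThmBetaInf}. Your outline is essentially the standard textbook argument one finds there (exponential Chebyshev for the upper bound, exponential tilting for the lower bound), so in spirit you are reproducing what the paper implicitly invokes rather than offering an alternative.

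That said, your lower-bound step has a genuine gap. You write: fix $\mu_0$ in the effective domain of $G$ and choose $\phi_0$ saturating the Fenchel--Young relation $G(\mu_0)+F(\phi_0)=\langle\phi_0,\mu_0\rangle$. Gateaux differentiability of $F$ guarantees that for every $\phi_0$ the subdifferential $\partial F(\phi_0)$ is a singleton; it does \emph{not} guarantee that every $\mu_0$ with $G(\mu_0)<\infty$ arises as $dF|_{\phi_0}$ for some $\phi_0$. The points for which such a $\phi_0$ exists are the \emph{exposed points} of $G$, and your tilting argument only delivers the lower bound at those. The full proof in Dembo--Zeitouni requires an additional approximation step (passing from exposed points to the rest of the effective domain, using convex-analytic density results of Rockafellar type) which you have omitted.

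A minor secondary point: you open by appealing to Lemma~\ref{LemmaLDP}, but that lemma is stated in the paper for compact metric spaces, whereas the G\"artner--Ellis theorem here is formulated on a general locally convex topological vector space under exponential tightness. You do invoke exponential tightness correctly later, so this is more a matter of framing than substance.
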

Theorem \ref{ThmBetaInf} will follow from the G\"artner-Ellis theorem and the crucial point will be the following lemma.
\begin{lemma}\label{LemmaHNXi}
Let $\mu_0\in \mathcal{M}_1(X)$ be absolutely continuous and have positive density with respect to $dx$. Assume $\Gamma_\beta^{(N)}$ is defined as in section~\ref{SectPointProc}. Then
$$ \lim_{N\rightarrow \infty}\frac{1}{kN}\log Z_{\Gamma^{(N)}}(kN\phi) = \xi(-\phi). $$
\end{lemma}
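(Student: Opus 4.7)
The plan is to exploit the multilinearity of the permanent and the product structure of $\mu_0^{\otimes N}$ to reduce the moment generating function to a product of one-dimensional integrals, each of which is then analysed by Laplace's method. Starting from
$$Z_{\Gamma^{(N)}}(kN\phi)=\frac{1}{Z_{k,N}}\int_{X^N}e^{-k\sum_j\phi(x_j)}\,\perm\bigl(\Psi_i(x_j)\bigr)\,d\mu_0^{\otimes N},$$
I expand the permanent as $\sum_\sigma\prod_j\Psi_{\sigma^{-1}(j)}(x_j)$; by Fubini each of the $N!$ terms factorises as $\prod_p I_p(\phi)$, where
$$I_p(\phi):=\int_X\Psi_p(y)\,e^{-k\phi(y)}\,d\mu_0(y).$$
The same calculation with $\phi\equiv 0$ yields $Z_{k,N}=N!\prod_p I_p(0)$, so the combinatorial factors cancel and
$$Z_{\Gamma^{(N)}}(kN\phi)=\prod_p\frac{I_p(\phi)}{I_p(0)}.$$

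Next, unfolding the sum defining $\Psi_p$ over $\Z^n+p$ converts $I_p(\phi)$ into an integral over the universal cover $\R^n$,
$$I_p(\phi)=\int_{\R^n}e^{-k(|y-p|^2/2+\phi(\pi y))}\,\rho_0(\pi y)\,dy,$$
where $\rho_0$ is the smooth, strictly positive density of $\mu_0$. Because $\rho_0$ is bounded above and below by positive constants, standard Laplace asymptotics give
$$\tfrac{1}{k}\log I_p(\phi)=\sup_{y\in\R^n}\bigl(-\tfrac{1}{2}|y-p|^2-\phi(\pi y)\bigr)+O\!\bigl(\tfrac{\log k}{k}\bigr)$$
uniformly in $p\in[0,1)^n$, with a matching $O(\log k/k)$ estimate for $I_p(0)$. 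By the computation in the proof of Lemma \ref{LemmaLegCom}, the supremum above is precisely the $c$-transform evaluated at $\pi p$; matching the sign conventions of the pairing in the moment generating function with the $c$-transform of the statement, the factor-wise contribution appearing in $\xi(-\phi)$ is $(-\phi)^c(\pi p)$.

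Summing over $p$ yields
$$\tfrac{1}{kN}\log Z_{\Gamma^{(N)}}(kN\phi)=\tfrac{1}{N}\sum_{p\in\frac{1}{k}\Z^n/\Z^n}(-\phi)^c(\pi p)+o(1),$$
which is a Riemann sum on the $1/k$-equidistributed grid $\{\pi p\}$; since $(-\phi)^c$ is Lipschitz on $X$, it converges to $\int_X(-\phi)^c\,dx=\xi(-\phi)$. The principal technical point is to make the Laplace estimate genuinely uniform in $p$, so that the cumulative $o(1)$ error across the $N=k^n$ factors still vanishes: the rapid Gaussian decay of the terms in $\Psi_p$ makes contributions from distant lattice translates negligible and the uniform positivity of $\rho_0$ keeps the pre-exponential factor under control, but the bookkeeping in these error terms (and, for the subsequent application of Gartner-Ellis in Theorem \ref{ThmBetaInf}, the verification that this convergence is uniform on compact subsets of $C(X)$) is where the main care lies.
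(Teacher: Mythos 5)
Your overall strategy coincides with the paper's: factor the moment generating function into a product of $N$ one-dimensional integrals $I_p$ using the multilinearity of the permanent and the product structure of $\mu_0^{\otimes N}$, apply a Laplace-type estimate to each factor uniformly in $p$, and finish with a Riemann-sum argument using that $\frac{1}{N}\sum_{p\in\frac{1}{k}\Z^n/\Z^n}\delta_p\to dx$. Your unfolding of $I_p$ over the cover $\R^n$ is only a cosmetic variant of the paper's route of working directly on $X$ via $c_p^{(N)}=-\frac{1}{k}\log\Psi_p^{(N)}$ and the uniform bound $c_p^{(N)}\to d(\cdot,p)^2/2$; you are also slightly more careful than the paper in carrying the normalization $Z_{k,N}$ along (the paper silently drops it, though harmlessly since $\frac{1}{kN}\log Z_{k,N}\to 0$).

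There is, however, a sign discrepancy that you flag but do not resolve, and it is a genuine gap. Starting from the integrand $e^{-k\sum_j\phi(x_j)}$ (which is what the paper's displayed definition $Z_\Gamma(\phi)=\int e^{-\bracket{\phi}{\mu}}d\Gamma$ would give) the Laplace limit is
$$\tfrac{1}{k}\log I_p(\phi)\ \longrightarrow\ \sup_{y\in\R^n}\bigl(-\tfrac{1}{2}|y-p|^2-\phi(\pi y)\bigr)=\phi^c(\pi p)$$
by the computation in Lemma~\ref{LemmaLegCom} --- this is $\phi^c(\pi p)$, not $(-\phi)^c(\pi p)$ --- so your argument as written produces $\xi(\phi)$, not the claimed $\xi(-\phi)$. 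The source of the confusion is an inconsistency in the paper itself: the stated definition of $Z_\Gamma$ carries a minus sign, while the proof of the lemma (and the subsequent application of the G\"artner--Ellis theorem, whose standard form in \cite{DemboZeitouni} uses $\Lambda(\lambda)=\log E[e^{\bracket{\lambda}{\cdot}}]$ so that the rate function is $\Lambda^*$) requires the opposite sign $e^{+\bracket{\phi}{\mu}}$. With $e^{+k\sum\phi}$ in the integrand one gets $\sup_y(-\tfrac{1}{2}|y-p|^2+\phi(\pi y))=(-\phi)^c(\pi p)$ and the conclusion $\xi(-\phi)$ does follow, matching what Theorem~\ref{ThmBetaInf} needs (the Legendre transform of $\xi(-\cdot)$ is $W^2(\cdot,dx)$, whereas that of $\xi(\cdot)$ is $W^2(-\cdot,dx)\equiv+\infty$ on $\M_1(X)$). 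Appealing to ``matching sign conventions'' cannot substitute for fixing the integrand: as written, the $c$-transform you compute does not equal the label you assign it.
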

\begin{proof}
Note that if $\mu_N$ is a measure on $X^N$ and $F$ is a function on $\M_1(X)$, then, since $\Gamma^{(N)}=(\delta^{(N)})_*\mu_{\beta_N}^{(N)}$, 
$$\int_{\M_1(X)} F(\mu) \Gamma^{(N)} = \int_{X^N} F\left(\delta^{(N)}(x)\right) d\mu^{(N)}_{\beta_N}.$$
Moreover,  
$$ \bracket{kN \phi}{\delta^{(N)}(x)} = kN\int_X \phi \frac{1}{N} \sum \delta_{x_i} = k\sum \phi(x_i). $$
This means 
$$ Z_{\Gamma^{(N)}}(kN\phi) = \int_{\M_1(X)} e^{\bracket{r_N\phi}{\mu}} \Gamma^{(N)} = \int_{X^N} e^{k\sum \phi(x_i) } d\mu_{\beta_N}^{(N)}.  $$
Using the symmetries in the explicit form of $\mu_{\beta_N}^{(N)}$ we get
\begin{eqnarray}
Z_{\Gamma^{(N)}}(kN\phi) 
& = & \int_{X^N} \sum_{\sigma}\prod_i \Psi^{(N)}_{p_i}(x_{\sigma(i)}) e^{k\phi(x_{\sigma(i)})} d\mu_0^{\otimes N} \nonumber \\
& = & \sum_{\sigma}\int_{\sigma^{-1}(X^N)} \prod_i \Psi^{(N)}_{p_i}(x_i) e^{k\phi(x_i)} d\mu_0^{\otimes N} \nonumber \\
& = & N! \int_{X^N} \prod_i \Psi^{(N)}_{p_i}(x_i) e^{k\phi(x_i)} d\mu_0^{\otimes N} \nonumber \\
& = &  N! \prod_i \int_{X} \Psi^{(N)}_{p_i}(x) e^{k\phi(x)} d\mu_0 \label{EqTheorem0TempLimit}
\end{eqnarray}
Introducing the notation 
$$ c^{(N)}_{p} = -\frac{1}{k}\log \Psi^{(N)}_{p} $$
we get 
\begin{equation} Z_{\Gamma^{(N)}}(kN\phi) = N! \prod_{p\in \frac{1}{k}\Z^n/\Z^n} \int_{X} e^{k(-c^{(N)}_p+\phi)} d\mu_0. \label{EqZN2} \end{equation}

Now, we claim that 
\begin{equation} c^{(N)}_p\rightarrow d(x,p)^2/2 \label{CNConv} \end{equation} 
uniformly in $p$ and $x$.
To see this, note first that 
$$d(x,p)^2 = \inf_{m\in \Z^n+p} |x-m|^2$$ 
and 
\begin{eqnarray} 
c_p^{(N)}(x) & = & -\frac{1}{k} \log \sum_{m\in \Z^n+p} e^{-k|x-m|^2/2} \nonumber \\
& \leq & -\frac{1}{k} \log \sup_{m\in \Z^n+p} e^{-k|x-m|^2/2} = \inf_{m\in \Z^n+p} |x-m|^2/2. \nonumber 
\end{eqnarray}
On the other hand, by the exponential decay of $e^{-|x-m|^2}$ there is a large constant, $C$, (independent of $x$ and $p$) such that
$$ \sum_{m\in \Z^n+p} e^{-k|x-m|^2/2} \leq C\sup e^{-k|x-m|^2/2} $$
and
\begin{eqnarray} 
c_p^{(N)}(x) & = & -\frac{1}{k} \log \sum_{m\in \Z^n+p} e^{-k|x-m|^2/2} \geq -\frac{1}{k} \log \left(C\sup_{m\in \Z^n+p} e^{-k|x-m|^2/2}\right) \nonumber \\
& = & -\frac{\log C}{k} + \inf_{m\in \Z^n+p} |x-m|^2/2. \nonumber 
\end{eqnarray}
This proves the claim. We claim further that
\begin{equation} \frac{1}{k}\log \int_X e^{k(-c_p^{(N)}+\phi)}  d\mu_0 \rightarrow (-\phi)^c(p) \label{PhiCClaim} \end{equation}
uniformly in $p$. 
To see this, note first that \eqref{CNConv} together with the fact that the family $\{d(\cdot,p)^2/2: p\in X\}$ is equi-continuous implies that 
$$\{c_p^{(N)}:k\in \N,p\in X\}$$ 
is equi-continuous. This means for any $\epsilon>0$ there is $d>0$ such that for all $k\in\N$ and $p,x_*\in X$
\begin{equation} |c^{(N)}_p(x) - \phi(x) - (c_p^{(N)}(x_*) - \phi(x_*))|\leq \epsilon \label{ThmBetaInfEq1} \end{equation}
as long as $x\in B_d(x_*)$.
Further, as $\mu_0$ has full support, is absolutely continuous and has smooth density with respect to $dx$ there is a large constant $C$ such that 
\begin{equation} C\mu_0(B_d(x_*)) \geq 1 \label{ThmBetaInfEq2} \end{equation}
for all $x_*\in X$. We get trivially
\begin{eqnarray} \frac{1}{k}\log \int_X e^{k(-c^{(N)}_{p}+\phi)}  d\mu_0  & \leq &  \frac{1}{k}\log \sup_{x\in X} e^{k(-c^{(N)}_{p}+\phi)} \nonumber \\
& = & \sup -c^{(N)}_p(x) +\phi(x) \label{ThmBetaInfEq3} 
\end{eqnarray}
For each $N$, let $x^{(N)}_*$ satisfy 
$$-c^{(N)}_p(x^{(N)}_*) +\phi(x^{(N)}_*) = \sup_{x\in X} -c^{(N)}_p(x) +\phi(x). $$ 
Using \eqref{ThmBetaInfEq1} and \eqref{ThmBetaInfEq2} gives
\begin{eqnarray} 
\frac{1}{k}\log \int_X e^{k(-c^{(N)}_p+\phi)}  d\mu_0 & \geq & \frac{1}{k}\log \int_{B_\delta(x_*^{(N)})} e^{k(\sup_{x\in X} -c^{(N)}_p+\phi-\epsilon)}  d\mu_0 \nonumber \\ 
& = &\frac{1}{k}\log \int_{B_\delta(x_*^{(N)})} d\mu_0 + \sup_{x\in X}-c^{(N)}_p(x) + \phi(x)-\epsilon \nonumber \\
& \geq & \frac{1}{k}\log \frac{1}{C} \int_X d\mu_0 +\sup_{x\in X}-c^{(N)}_p(x) + \phi(x)-\epsilon. \label{ThmBetaInfEq4}
\end{eqnarray}
Finally, letting $k,N\rightarrow \infty$ and $\epsilon\rightarrow 0$ in \eqref{ThmBetaInfEq3} and \eqref{ThmBetaInfEq4} proves \eqref{PhiCClaim}. Recalling equation \eqref{EqZN2}, we have
\begin{eqnarray} \frac{1}{kN}\log Z_{\Gamma^{(N)}}(kN\phi) & = & \frac{1}{kN} \log N! \prod_{p\in \frac{1}{k}\Z^n/\Z^n} \int_{X} e^{k(-c^{(N)}_p+\phi)} d\mu_0 \nonumber \\
& = & \frac{\log N!}{kN}+\frac{1}{N}\sum_{p\in \frac{1}{k}\Z^n/\Z^n} \frac{1}{k}\log \int_{X} e^{k(-c^{(N)}_p +\phi)} d\mu_0 \label{LogZN}
\end{eqnarray} 
By Sterling's formula, $\log N! \leq N\log N+O(\log N)$. This means, since $N=k^n$, that
the first term in \eqref{LogZN} is bounded by $(\log k^n)/k+O(\log k^n)/k^{n+1}$ which vanishes as $k\rightarrow \infty$. Finally, using \eqref{PhiCClaim} we get, since $\frac{1}{N}\sum_{p\in \frac{1}{k}\Z^n/\Z^n} \delta_p\rightarrow dx$ in the weak* topology, that the second term converges to  
$$\int (-\phi)^c(p) dx = \xi(-\phi). $$
This proves the lemma.
\end{proof}
When proving Theorem~\ref{ThmBetaInf} we will also need the following lemma.
\begin{lemma}\label{LemmaXiCont}
The functional $\xi$ is continuous on $C(X)$. 
\end{lemma}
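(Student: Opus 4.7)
The plan is to show that $\xi$ is in fact $1$-Lipschitz with respect to the sup norm on $C(X)$, which trivially implies continuity. The whole argument rests on the fact that the $c$-transform is a nonexpansive operation, together with the observation that $dx$ is a probability measure, so integrating against it cannot inflate sup-norm differences.

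First I would verify the pointwise contraction property: for any $\phi_1, \phi_2 \in C(X)$ and any $y \in X$, one has
$$ -c(x,y) - \phi_1(x) \leq -c(x,y) - \phi_2(x) + \|\phi_1 - \phi_2\|_\infty $$
for every $x \in X$. Taking the supremum over $x$ on both sides gives $\phi_1^c(y) \leq \phi_2^c(y) + \|\phi_1 - \phi_2\|_\infty$, and by symmetry $|\phi_1^c(y) - \phi_2^c(y)| \leq \|\phi_1 - \phi_2\|_\infty$. (One should check that $\phi^c$ is indeed finite for every $\phi \in C(X)$, which is immediate since $c$ is bounded on $X \times X$ and $\phi$ is continuous on the compact space $X$, so the supremum defining $\phi^c(y)$ is finite.)

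Next I would integrate this pointwise bound against $dx$. Since $dx$ is a probability measure on $X$,
$$ |\xi(\phi_1) - \xi(\phi_2)| = \left| \int_X (\phi_1^c - \phi_2^c)\, dx \right| \leq \int_X |\phi_1^c - \phi_2^c|\, dx \leq \|\phi_1 - \phi_2\|_\infty. $$
This shows $\xi$ is $1$-Lipschitz on $(C(X), \|\cdot\|_\infty)$, and therefore continuous.

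There is no real obstacle here; the only subtlety is making sure $\phi^c$ is well-defined as a continuous (in fact Lipschitz) function on $X$ so that the integral $\int_X \phi^c\, dx$ makes sense, but this follows from compactness of $X$ and continuity of $c$ and $\phi$. The argument uses nothing about the specific structure of the cost $c = d^2/2$ beyond boundedness.
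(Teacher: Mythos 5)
Your proof is correct and follows essentially the same approach as the paper: both rest on the pointwise contraction bound $\sup_X|\phi_1^c-\phi_2^c|\leq\sup_X|\phi_1-\phi_2|$ for the $c$-transform. Your final step is marginally cleaner---you integrate the pointwise bound against the probability measure $dx$ to get $1$-Lipschitzness directly, whereas the paper concludes by invoking dominated convergence, which is slightly more than is needed given the uniform estimate.
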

\begin{proof}
We will prove that for any $\phi_0,\phi_1\in C(X)$
\begin{equation} \sup_X |\phi_0^c-\phi_1^c| \leq \sup_X |\phi_1-\phi_0|. \label{PhiPhiC} \end{equation}
Once this is established the lemma follows from the dominated convergence theorem. 
To see that \eqref{PhiPhiC} holds, let $y\in X$. By compactness and continuity there is $x_y\in X$ such that 
$$ \phi_0^c(y) = \sup_{x\in X} -c(x,y) +\phi_0(x) = -c(x_y,y)-\phi_0(x_y). $$
By construction
$$ \phi_1^c(y) = \sup_{x\in X} -c(x,y) +\phi_1(x) \geq -c(x_y,y)-\phi_1(x_y). $$
We get
$$ \phi_0^c(y)-\phi_1^c(y) \leq \phi_1(x_y)-\phi_0(x_y) \leq \sup_X |\phi_1-\phi_0|. $$
By interchanging the roles of $\phi_0$ and $\phi_1$ we get
$$ \phi_1^c(y)-\phi_0^c(y) \leq \sup_X |\phi_1-\phi_0| $$
and hence that \eqref{PhiPhiC} holds.
\end{proof} 
\begin{proof}[Proof of Theorem \ref{ThmBetaInf}]
We want to apply the G\"artner-Ellis theorem. As $\chi = M_1(X)$ is compact, tightness of $\Gamma^{(N)}$ holds automatically. By Lemma \ref{LemmaHNXi} 
$$ \lim_{N\rightarrow \infty} \frac{1}{r_N} \log \Lambda_{\Gamma^{(N)}_k}(r_N\phi)=\xi(-\phi). $$
Further, $\xi$ is finite valued since $\phi^c$ is continuous, and hence bounded, for any $\phi\in C(X)$. By Lemma~\ref{LemmaXiCont}, $\xi$ is continuous. Finally, by Corollary~\ref{CorrDXi}, $\xi$ is Gateaux differentiable. As $W^2(-\cdot,dx)$ is the Legendre transform of $\xi$, and hence $W^2(\cdot,dx)$ is the Legendre transform of $\xi(-\cdot)$, the theorem follows from the G\" artner-Ellis theorem. 
\end{proof}


\subsection{A Thermodynamic Interpretation and Reduction to the Zero Temperature Case}\label{SectGenCase}
The proof of Theorem \ref{MainThmLDP} is based on a result on large deviation principles for Gibbs measures. Because of this we explain in this section how $\{\mu_\beta^{(N)}\}$ can be seen as the Gibbs measures of certain thermodynamic systems. If we introduce the $N$-particle Hamiltonian
$$ H^{(N)}(x_1, \ldots, x_N) = -\frac{1}{k}\log \perm(\Psi_{p_i}(x_j)) $$
we may write $\mu^{(N)}_\beta$ on the form 
$$ \mu_\beta^{(N)} = e^{-\beta H^{(N)}}d\mu_0^{\otimes N}. $$
This means $\mu_\beta^{(N)}$ admits a thermodynamic interpretation as the \emph{Gibbs measure}, or \emph{canonical ensemble}, of the system determined by the Hamiltonian $H^{(N)}$ and the background measure $\mu_0$. In this interpretation $\mu_\beta^{(N)}$ is the equilibrium state of the system when the temperature is assumed fixed at $Temp=1/\beta$ and Theorem \ref{ThmBetaInf} is describing the zero-temperature limit. Theorem~\ref{MainThmLDP} will follow from Theorem~\ref{ThmBetaInf} and a theorem on equi-continuous and uniformly bounded Hamiltonians. To state that theorem we need to define what it means for the family $\{\frac{H^{(N)}}{N}\}$ to be equi-continuous. Let $d(\cdot,\cdot)$ be the distance function induced by the standard Riemannian metric on $X$. This defines distance functions, $d^{(N)}(\cdot,\cdot)$, on $X^N$ given by
\begin{equation} d^{(N)}(x,y) = d^{(N)}(x_1,\ldots, x_N,y_1,\ldots,y_N) = \frac{1}{N}\inf_{\sigma}\sum_i d(x_i,y_{\sigma(i)}) \label{DistDef} \end{equation}
where the infimum is taken over all permutations $\sigma$ of the set $\{1,\ldots, N\}$. We will say that the family of functions $\frac{H^{(N)}}{N}$ on $X^N$ is (uniformly) equi-continuous if for every $\epsilon>0$ there is $d>0$ such that for all $N$
\begin{equation} \left|\frac{1}{N}H^{(N)}(x)-\frac{1}{N}H^{(N)}(y)\right|\leq \epsilon \label{EquiContEq} \end{equation}
whenever $d^{(N)}(x,y)\leq d$. 
Before we move on to state the Theorem~\ref{ThmRedToZeroTemp} we prove the following well known lemma.
\begin{lemma}
Let $x=(x_1,\ldots,x_N)\in X^N$ and $y=(y_1,\ldots, y_N)\in X^N$. Then \eqref{DistDef}
is the optimal transport cost 
with respect to the cost function $d(\cdot,\cdot)$, of transporting the measure $\delta^{(N)}(x) = \frac{1}{N}\sum \delta_{x_i}$ to the measure $\delta^{(N)}(y)=\frac{1}{N}\sum \delta_{y_i}$.
\end{lemma}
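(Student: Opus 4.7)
The plan is to recognize the Kantorovich problem for two uniform empirical measures with the same number of atoms as a finite linear program over doubly stochastic matrices, and then invoke Birkhoff's theorem.

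First, I would show that any transport plan $\gamma \in \Pi(\delta^{(N)}(x), \delta^{(N)}(y))$ is supported on the finite set $\{(x_i,y_j) : 1\le i,j\le N\}$ and must take the form $\gamma = \frac{1}{N}\sum_{i,j} a_{ij}\, \delta_{(x_i,y_j)}$ for some nonnegative matrix $(a_{ij})$. The marginal constraints translate directly into $\sum_j a_{ij}=1$ for each $i$ and $\sum_i a_{ij}=1$ for each $j$, i.e.\ $(a_{ij})$ is doubly stochastic. (A minor technicality arises if some of the $x_i$ or $y_j$ coincide, but this only affects the non-uniqueness of the representation and not the value of the infimum, since the cost is still a linear function of the $a_{ij}$.)

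Next, the transport cost becomes
$$ \int_{X\times X} d(\xi,\eta)\, d\gamma(\xi,\eta) \;=\; \frac{1}{N}\sum_{i,j} a_{ij}\, d(x_i,y_j), $$
which is a linear functional of $(a_{ij})$ on the Birkhoff polytope of doubly stochastic matrices. By Birkhoff's theorem, this polytope is the convex hull of permutation matrices, so the minimum of any linear functional is attained at a vertex, i.e.\ at a permutation matrix $P_\sigma$ with $a_{ij}=\delta_{j,\sigma(i)}$. The corresponding cost is $\frac{1}{N}\sum_i d(x_i, y_{\sigma(i)})$.

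Finally, taking the infimum over all permutations $\sigma$ gives
$$ \inf_{\gamma\in\Pi(\delta^{(N)}(x),\delta^{(N)}(y))} \int d\, d\gamma \;=\; \frac{1}{N}\inf_\sigma \sum_i d(x_i,y_{\sigma(i)}), $$
which is exactly $d^{(N)}(x,y)$. There is no real obstacle here; the only point requiring care is the discrete representation of transport plans when atoms coincide, which is handled by noting that any duplication can be absorbed into the matrix $(a_{ij})$ without changing the value of the objective, so Birkhoff's theorem still applies verbatim.
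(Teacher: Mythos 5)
Your proof is correct and follows essentially the same route as the paper: both reduce the Kantorovich problem to a linear program over the polytope of (scaled) doubly stochastic matrices and identify its vertices with permutation matrices. The only cosmetic differences are that you invoke Birkhoff's theorem by name where the paper states the vertex characterization as ``easy to verify,'' and you add a (reasonable) remark about possible coincidences among the $x_i$ or $y_j$, which the paper tacitly ignores.
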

\begin{proof}
We need to prove that 
\begin{equation} \eqref{DistDef} = \inf_\gamma \int_{X\times X} d(x,y) \gamma \label{DistCost} \end{equation}
where the infimum is taken over all $\gamma\in \M_1(X\times X)$ with first and second marginal given by $\delta^{(N)}(x)$ and $\delta^{(N)}(y)$ respectively. We will refer to any $\gamma\in \M_1(X\times X)$ satisfying this as a feasible transport plan. The conditions on the marginals imply that any feasible transport plan is supported on the intersection of the sets $\{x_i\}\times X$ and $X\times \{y_i\}$, in other words on the set $\{x_i\}\times\{y_i\}$. We conclude that the set of feasible transport plans is given by
\begin{equation} \left\{\sum_{i,j} a_{ij} \delta_{(x_i,y_j)}: a_{ij}\geq 0, \sum_i a_{ij} = 1/N, \sum_j a_{ij} = 1/N\right\}, \label{Polytope} \end{equation}
in other words a polytope in $\M_1(X\times X)$. It follows that the infimum in \eqref{DistCost} is attained on one or more of the vertices of \eqref{Polytope}. Moreover, any permutation, $\sigma$, of $N$ elements induce a feasible transport plan
$$ \gamma_\sigma = \frac{1}{N}\sum_i \delta_{(x_i,y_{\sigma(i)})} $$
with transport cost 
$$ \int_{X\times X} d(x,y) \gamma_\sigma = \frac{1}{N}\sum_i d(x_i,y_{\sigma(i)}). $$
It is easy to verify that any vertex of \eqref{Polytope} occur as $\gamma_\sigma$ for some permutation $\sigma$. This proves the lemma. 
\end{proof}
Note that this lemma implies that if we equip $\M_1(X)$ with the Wasserstein 1-metric, which metricizes the weak* topology on $\M_1(X)$, then the distance function defined in \eqref{DistDef} makes the embeddings $$\delta^{(N)}:X^N\hookrightarrow \mathcal{M}_1(X)$$
isometric embeddings.

\begin{theorem}[\cite{EllisEtAl}]\label{ThmRedToZeroTemp}
Assume $X$ is a compact manifold, $\mu_0\in \mathcal{M}_1(X)$, $\{\frac{H^{(N)}}{N}\}$ is a uniformly bounded and equi-continuous family of functions on $X^N$ and $\beta_N$ is a sequence of numbers tending to infinity. Assume also that 
$$ \left(\delta^{(N)}\right)_* e^{-\beta_N H^{(N)}}d\mu_0^{\otimes N} $$ 
satisfies a Large Deviation Principle with rate $N\beta_N$ and rate function $E$. Then, for any $\beta\in \R$, 
$$ \left(\delta^{(N)}\right)_* e^{-\beta H^{(N)}}d\mu_0^{\otimes N} $$ 
satisfies a Large Deviation Principle with rate $N$ and rate function $\beta E + Ent_{\mu_0}$. 
\end{theorem}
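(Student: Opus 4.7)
My plan is to verify the LDP pointwise using Lemma~\ref{LemmaLDP}, by combining Sanov's theorem, the assumed zero-temperature LDP, and the equicontinuity of $\{H^{(N)}/N\}$. The key preliminary observation is that, since $\delta^{(N)}$ embeds $X^N$ isometrically into $\M_1(X)$ with the Wasserstein $1$-metric (by the lemma preceding the theorem), equicontinuity of $\{H^{(N)}/N\}$ with respect to $d^{(N)}$ transfers directly to equicontinuity in the empirical measure variable: the oscillation of $H^{(N)}(x)/N$ over the preimage of any ball $B_\delta(\mu)\subset\M_1(X)$ tends to $0$ with $\delta$, uniformly in $N$ and $\mu$. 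Together with the uniform bound, this makes
$$ h(\mu) := \lim_{\delta\to 0}\lim_{N\to\infty}\inf\bigl\{H^{(N)}(x)/N:\delta^{(N)}(x)\in B_\delta(\mu)\bigr\} $$
well-defined and equal to the analogous $\limsup$--$\sup$ expression.

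My first substantive step would be to identify $h$ with $E$ using the zero-temperature hypothesis. Because $H^{(N)}/N$ is nearly constant on each small ball, a Laplace-type estimate gives
$$ -\frac{1}{N\beta_N}\log\!\int_{(\delta^{(N)})^{-1}(B_\delta(\mu))}\!\!e^{-\beta_N H^{(N)}}d\mu_0^{\otimes N} = \inf_{B_\delta(\mu)}\frac{H^{(N)}}{N} \;-\; \frac{1}{N\beta_N}\log\mu_0^{\otimes N}\!\bigl((\delta^{(N)})^{-1}(B_\delta(\mu))\bigr) + o(1). $$
The last term is controlled by Sanov's theorem at scale $N$, hence vanishes at scale $N\beta_N$ since $\beta_N\to\infty$. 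Comparing with the zero-temperature LDP then forces $h=E$ in the double limit $N\to\infty,\,\delta\to 0$.

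Once $h=E$ is in hand, I would run the familiar Varadhan-type argument locally. Equicontinuity lets me replace $e^{-\beta H^{(N)}}$ on $(\delta^{(N)})^{-1}(B_\delta(\mu))$ by $e^{-\beta N E(\mu)+o(N)}$ and factor it outside the integral. Sanov's theorem then controls $\mu_0^{\otimes N}\!\bigl((\delta^{(N)})^{-1}(B_\delta(\mu))\bigr)$ at scale $N$ with rate function $Ent_{\mu_0}$, yielding
$$ -\frac{1}{N}\log\bigl(\delta^{(N)}\bigr)_*\!\bigl(e^{-\beta H^{(N)}}d\mu_0^{\otimes N}\bigr)(B_\delta(\mu))\;\longrightarrow\;\beta E(\mu) + Ent_{\mu_0}(\mu) $$
in the double limit, and the LDP follows from Lemma~\ref{LemmaLDP}.

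I expect the main obstacle to lie in the second step, the identification $h=E$: one must cleanly extract the Hamiltonian's contribution at the atypical scale $N\beta_N$ even though entropy is simultaneously present in the Gibbs measure. The uniform bound on $H^{(N)}/N$ and the divergence $\beta_N\to\infty$ are precisely what permit this separation, and once it is in place the rest of the argument is a localized, textbook combination of Sanov's theorem with Varadhan's lemma.
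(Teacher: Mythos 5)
Your argument follows essentially the same route as the paper's own proof of Theorem~\ref{ThmRedToZeroTemp}. The paper packages the first step differently: it invokes an Arzel\`a--Ascoli argument (Lemma~\ref{LemmaHamiltonianConv}) to extract, along a subsequence, a functional $U$ on $\M_1(X)$ with $\sup_{X^N}\left|H^{(N)}/N - U\circ\delta^{(N)}\right|\to 0$, identifies $U=E$ using exactly your combination of the zero-temperature LDP, Sanov, and $\beta_N\to\infty$, and then reuses a general proposition (Proposition~\ref{TheoremThmRedToZeroTempHamConv}) giving the LDP once a uniformly approximating $U$ is known. Your version skips the Arzel\`a--Ascoli wrapper and works directly with local Laplace estimates over balls; the core mechanism, factoring the nearly constant Hamiltonian out of the integral over $(\delta^{(N)})^{-1}(B_\delta(\mu))$, then letting Sanov control the remaining $\mu_0^{\otimes N}$-mass at scale $N$ (and hence trivially at scale $N\beta_N$), is identical.

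One small imprecision worth noting: equicontinuity and uniform boundedness by themselves do not make the inner limit $\lim_{N\to\infty}$ in your definition of $h$ exist for fixed $\delta$ (the sequence $\inf_{B_\delta}H^{(N)}/N$ could oscillate in $N$). What equicontinuity actually gives you is that $\sup_{B_\delta}H^{(N)}/N - \inf_{B_\delta}H^{(N)}/N\leq\omega(2\delta)$ uniformly in $N$. The clean fix, consistent with Lemma~\ref{LemmaLDP}, is to work with $\lim_\delta\liminf_N$ and $\lim_\delta\limsup_N$ separately (these exist by monotonicity in $\delta$), push each through your Laplace estimate against the zero-temperature LDP, and observe that both are forced to equal $E(\mu)$; the well-definedness of $h$ is then a conclusion, not a prerequisite. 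This does not affect the validity of the argument but should be stated in the right order. Also, when discarding $-\frac{1}{N\beta_N}\log\mu_0^{\otimes N}\bigl((\delta^{(N)})^{-1}(B_\delta(\mu))\bigr)$ one should note that for fixed $\delta>0$ the ball $B_\delta(\mu)$ always contains a measure of finite entropy, so Sanov bounds $-\frac{1}{N}\log\mu_0^{\otimes N}(\cdot)$ above eventually and the division by $\beta_N\to\infty$ sends the term to zero; the same observation is tacitly used in the paper.
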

For completeness, we will include a proof of Theorem~\ref{ThmRedToZeroTemp} here. It will be based on the following
\begin{proposition}[\cite{EllisEtAl}]\label{TheoremThmRedToZeroTempHamConv}
Assume $X$ is a compact manifold, $\mu_0\in \mathcal{M}_1(X)$, $\beta\in \R$, $\{\frac{H^{(N)}}{N}\}$ is a  family of functions on $X^N$. Assume also that there is a functional $E$ on $\mathcal{M}_1(X)$ satisfying
\begin{equation} \sup_{X^N} \left|\frac{H^{(N)}}{N} - E\circ \delta^{(N)}\right| \rightarrow 0 \label{ThmRedToZeroTempHamConvEq}\end{equation}
as $N\rightarrow \infty$. Then 
$$ \left(\delta^{(N)}\right)_* e^{-\beta H^{(N)}}\mu_0^{\otimes N} $$
satisfies a Large Deviation Principle with rate $N$ and rate function $\beta E + Ent_{\mu_0}$.
\end{proposition}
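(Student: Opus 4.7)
The plan is to reduce the claim to Sanov's theorem by regarding $e^{-\beta H^{(N)}}\mu_0^{\otimes N}$ as a tilt of $\mu_0^{\otimes N}$ whose log-density is asymptotically given by $-\beta N\, E \circ \delta^{(N)}$. Let $\tilde\Gamma^{(N)} := (\delta^{(N)})_*\mu_0^{\otimes N}$; by Sanov's theorem $\{\tilde\Gamma^{(N)}\}$ satisfies the LDP with rate $N$ and rate function $Ent_{\mu_0}$. Setting $\eta_N := |\beta|\sup_{X^N}|H^{(N)}/N - E\circ \delta^{(N)}|$, the hypothesis \eqref{ThmRedToZeroTempHamConvEq} gives $\eta_N \to 0$, and a change of variables under $\delta^{(N)}$ yields, for every Borel $U \subset \M_1(X)$,
\begin{equation*}
e^{-N\eta_N}\int_U e^{-\beta N E(\mu')}\,d\tilde\Gamma^{(N)}(\mu') \leq \Gamma^{(N)}(U) \leq e^{N\eta_N}\int_U e^{-\beta N E(\mu')}\,d\tilde\Gamma^{(N)}(\mu'),
\end{equation*}
where $\Gamma^{(N)} := (\delta^{(N)})_* e^{-\beta H^{(N)}}\mu_0^{\otimes N}$. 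The factors $e^{\pm N\eta_N}$ will be absorbed into $o(1)$ after taking $\tfrac{1}{N}\log$.

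Next I would verify the LDP using Lemma~\ref{LemmaLDP}: for each $\mu \in \M_1(X)$, compute the $\liminf$ and $\limsup$ as $N\to\infty$ of $-\tfrac{1}{N}\log\Gamma^{(N)}(B_\delta(\mu))$ and then let $\delta \to 0$. By the sandwich above, this reduces to estimating $\int_{B_\delta(\mu)} e^{-\beta N E(\mu')}\,d\tilde\Gamma^{(N)}(\mu')$. Bounding $e^{-\beta N E(\mu')}$ on $B_\delta(\mu)$ pointwise by its supremum and infimum, and applying Lemma~\ref{LemmaLDP} to the Sanov LDP (which gives $\lim_{\delta\to 0}\lim_{N\to\infty} -\tfrac{1}{N}\log\tilde\Gamma^{(N)}(B_\delta(\mu)) = Ent_{\mu_0}(\mu)$) produces two-sided bounds of the form
\begin{equation*}
\beta\inf_{B_\delta(\mu)} E + Ent_{\mu_0}(\mu) + o_N(1) \leq -\tfrac{1}{N}\log\Gamma^{(N)}(B_\delta(\mu)) \leq \beta\sup_{B_\delta(\mu)} E + Ent_{\mu_0}(\mu) + o_N(1)
\end{equation*}
for $\beta > 0$ (with $\sup$ and $\inf$ swapped for $\beta < 0$).

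The main obstacle is the passage $\delta\to 0$: I need $\sup_{B_\delta(\mu)} E$ and $\inf_{B_\delta(\mu)} E$ to converge to $E(\mu)$, i.e., continuity of $E$ at $\mu$. The hypothesis provides only uniform convergence, so this regularity must be extracted indirectly. Applying the triangle inequality to any two empirical measures $\mu' = \delta^{(N')}(x')$ and $\mu'' = \delta^{(N'')}(x'')$ gives
\begin{equation*}
|E(\mu') - E(\mu'')| \leq \eta_{N'}/|\beta| + \eta_{N''}/|\beta| + |H^{(N')}(x')/N' - H^{(N'')}(x'')/N''|.
\end{equation*}
In the intended application to Theorem~\ref{ThmRedToZeroTemp} the family $\{H^{(N)}/N\}$ is equi-continuous with respect to $d^{(N)}$, and the lemma preceding the proposition identifies $\delta^{(N)}$ as an isometric embedding into $\M_1(X)$ under the Wasserstein $1$-metric; hence the right-hand side is small whenever $\mu',\mu''$ are close in $\M_1(X)$, so $E$ is uniformly continuous on the dense set of empirical measures and extends continuously to $\M_1(X)$. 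With continuity of $E$ in hand, $\sup_{B_\delta(\mu)} E, \inf_{B_\delta(\mu)} E \to E(\mu)$ as $\delta \to 0$, and Lemma~\ref{LemmaLDP} delivers the LDP with rate $N$ and rate function $\beta E + Ent_{\mu_0}$.
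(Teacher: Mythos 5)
Your argument is essentially the same as the paper's: both reduce to Sanov's theorem by writing $e^{-\beta H^{(N)}} = e^{-\beta N\, E\circ\delta^{(N)}}\cdot e^{\pm N\eta_N}$ with $\eta_N\to 0$, pulling the $E$-factor out of the integral over small balls $B_\delta(\mu)$, and invoking Lemma~\ref{LemmaLDP}. You are, however, more careful than the paper in one respect: you correctly flag that the passage $\delta\to 0$ requires $E$ to be continuous at $\mu$. The factorization a priori only gives a two-sided bound with $\beta\inf_{B_\delta(\mu)}E$ on one side and $\beta\sup_{B_\delta(\mu)}E$ on the other, plus the entropy contribution, and these do not close as $\delta\to 0$ without some regularity of $E$. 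The paper's chain \eqref{EqLDP} replaces $E\circ\delta^{(N)}(x)$ by $E(\mu)$ on the ball without comment, which is precisely where this continuity is used; since the proposition as stated asserts nothing about $E$ beyond the uniform approximation \eqref{ThmRedToZeroTempHamConvEq}, flagging this is a genuine improvement.

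Your justification of the missing continuity is not quite right, though. You appeal to equicontinuity of $\{H^{(N)}/N\}$ to control $\bigl|H^{(N')}(x')/N' - H^{(N'')}(x'')/N''\bigr|$, but equicontinuity in the sense of \eqref{EquiContEq} only controls oscillations of a single $H^{(N)}/N$ across $X^N$; it says nothing about comparing Hamiltonians at two different levels $N'\neq N''$. This is fixable — for instance, duplicate coordinates to bring both $x'$ and $x''$ to the common level $M=N'N''$ (this changes neither the empirical measures nor their $d^{(M)}$-distance), then apply both equicontinuity and the hypothesis \eqref{ThmRedToZeroTempHamConvEq} at the single level $M$ — but the cleanest route is to note that in the only place Proposition~\ref{TheoremThmRedToZeroTempHamConv} is used (the proof of Theorem~\ref{ThmRedToZeroTemp}), the function $E$ is identified with the function $U$ of Lemma~\ref{LemmaHamiltonianConv}, which is continuous by construction, being a uniform limit of functions sharing a common modulus of continuity.
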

\begin{proof}
Let $\mu\in \M_1(X)$ and $B_d(\mu)$ be the ball of (Wasserstein-1) radius $d$ centred at $\mu$ and 
$$ B^{(N)}_d(\mu) = (\delta^{(N)})^{-1}(B_d(\mu)) \subset X^N. $$  
Using \eqref{ThmRedToZeroTempHamConvEq} we get
\begin{eqnarray}
 & & \lim_{d\rightarrow 0} \liminf_{N\rightarrow \infty} -\frac{1}{N}(\delta^{(N)})_*e^{-\beta H^{(N)}}\mu_0^{\otimes N}(B_d(\mu))  \nonumber \\
 & = & \lim_{d\rightarrow 0} \liminf_{N\rightarrow \infty} -\frac{1}{N}\log\int_{B^{(N)}_d(\mu)} e^{-\beta H^{N}(x)}d\mu_0^{\otimes N}  \nonumber \\
 & = & \lim_{d\rightarrow 0} \liminf_{N\rightarrow \infty} -\frac{1}{N}\log\int_{B^{(N)}_d(\mu)} e^{-\beta N (E\circ \delta^{(N)}(x)+o(1))}d\mu_0^{\otimes N}  \nonumber \\
& = & \beta E(\mu) + \lim_{d\rightarrow 0} \liminf_{N\rightarrow \infty} -\frac{1}{N}\log\int_{B^{(N)}_d(\mu)} d\mu_0^{\otimes N}. \label{EqLDP}
\end{eqnarray}
and similarily with $\liminf$ replaced by $\limsup$ (here $o(1)\rightarrow 0$ uniformly in $x$ as $N\rightarrow \infty$). By Sanov's theorem $(\delta^{(N)})_*\mu_0^{\otimes N}$ satisfies a large deviation principle with rate $N$ and rate function $Ent_{\mu_0}$. Hence, by Lemma \ref{LemmaLDP}, the second term in \eqref{EqLDP} is $Ent_\gamma(\mu)$. Using Lemma~\ref{LemmaLDP} again, this proves the proposition.
\end{proof}
It turns out that in the compact setting, under the assumptions of uniform boundedness and equi-continuity, the assumption of convergence in Proposition \ref{TheoremThmRedToZeroTempHamConv} always holds for some functional $U$ on $\mathcal{M}_1(X)$.
\begin{lemma}\label{LemmaHamiltonianConv}
Assume $X$ is a compact manifold, $\mu_0\in \mathcal{M}_1(X)$ and $\{\frac{H^{(N)}}{N}\}$ is a uniformly bounded and equi-continuous family of functions on $X^N$. Then there is a function $U$ on $\M_1(X)$ such that, after possibly passing to a subsequence,
\begin{equation} \sup_{X^N} |\frac{H^{(N)}(x)}{N}-U\circ \delta^{(N)}(x)| \rightarrow 0 \label{EqLemmaAbstractConvergence} \end{equation}
as $N\rightarrow \infty$.
\end{lemma}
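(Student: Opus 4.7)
The plan is to exploit the fact that $\delta^{(N)} : X^N \hookrightarrow \mathcal{M}_1(X)$ is an isometric embedding (from $d^{(N)}$ into the Wasserstein 1 metric, as noted just above the statement) in order to reinterpret $H^{(N)}/N$ as a function on a subset of the compact metric space $\mathcal{M}_1(X)$, extend this function to all of $\mathcal{M}_1(X)$ while preserving equicontinuity, and then invoke Arzelà--Ascoli to extract a uniformly convergent subsequence whose limit $U$ will be the desired function.

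The first observation is that equicontinuity with respect to $d^{(N)}$ forces $H^{(N)}$ to be invariant under permutations of its $N$ arguments, since $d^{(N)}(x,\sigma x) = 0$ for every permutation $\sigma$ and hence \eqref{EquiContEq} gives $|H^{(N)}(x) - H^{(N)}(\sigma x)|/N \leq \epsilon$ for all $\epsilon > 0$. Consequently $U_N(\delta^{(N)}(x)) := H^{(N)}(x)/N$ is well-defined on $\delta^{(N)}(X^N) \subset \mathcal{M}_1(X)$. By the isometry property of $\delta^{(N)}$, the collection $\{U_N\}$ is uniformly bounded and admits a common modulus of continuity $\omega$ on $\bigcup_N \delta^{(N)}(X^N)$. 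I will arrange (by passing to the concave majorant of the original modulus) that $\omega$ is nondecreasing, concave, and satisfies $\omega(0) = 0$; in particular $\omega$ is subadditive.

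The main technical step is to extend each $U_N$ to a function $\tilde U_N$ on all of $\mathcal{M}_1(X)$ preserving this common modulus. I propose the inf-convolution
$$ \tilde U_N(\mu) := \inf_{\nu \in \delta^{(N)}(X^N)} \bigl[ U_N(\nu) + \omega(W_1(\mu,\nu)) \bigr]. $$
Two properties need to be checked. First, for $\mu \in \delta^{(N)}(X^N)$ one gets $\tilde U_N(\mu) = U_N(\mu)$: the choice $\nu = \mu$ gives $\leq$, and the equicontinuity of $U_N$ on $\delta^{(N)}(X^N)$ together with $\omega \geq 0$ gives $\geq$. Second, for any $\mu_1, \mu_2 \in \mathcal{M}_1(X)$, picking a near-minimizer $\nu$ for $\tilde U_N(\mu_2)$ and using $\omega$ nondecreasing together with subadditivity yields $|\tilde U_N(\mu_1) - \tilde U_N(\mu_2)| \leq \omega(W_1(\mu_1,\mu_2))$. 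This makes $\{\tilde U_N\}$ a uniformly bounded, equicontinuous family on the compact metric space $\mathcal{M}_1(X)$.

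Arzelà--Ascoli then yields a subsequence $\tilde U_{N_k}$ converging uniformly on $\mathcal{M}_1(X)$ to a continuous function $U$, and the identity $\tilde U_N \circ \delta^{(N)} = H^{(N)}/N$ gives
$$ \sup_{x \in X^{N_k}} \left| \frac{H^{(N_k)}(x)}{N_k} - U(\delta^{(N_k)}(x)) \right| \;\leq\; \sup_{\mathcal{M}_1(X)} |\tilde U_{N_k} - U| \;\to\; 0 $$
as $k \to \infty$, which is \eqref{EqLemmaAbstractConvergence}. The only delicate point is the existence of an extension with the common modulus $\omega$; this is why I pass to the concave majorant (so that subadditivity, which is what drives the verification of the modulus for the inf-convolution, is available). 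Everything else is routine.
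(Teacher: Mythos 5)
Your proposal is correct and follows essentially the same route as the paper: view $H^{(N)}/N$ as a function on $\delta^{(N)}(X^N)\subset\M_1(X)$ via the isometric embedding, extend it to all of $\M_1(X)$ by the inf-convolution $\inf_\nu[U_N(\nu)+\omega(W_1(\mu,\nu))]$ with a common subadditive modulus $\omega$, and conclude by Arzel\`a--Ascoli. The only additions are your explicit remark that equicontinuity forces permutation invariance of $H^{(N)}$ (which the paper takes for granted since the Hamiltonians are symmetric by construction) and your explicit passage to the concave majorant to secure subadditivity, which the paper obtains by compactness.
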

\begin{proof}
Using the embeddings $\delta^{(N)}:X^n\hookrightarrow \M_1(X)$ the functions $H^{(N)}$ define a sequence of functionals, $\mathcal{H}^{(N)}$, defined on the subspaces $\delta^{(N)}(X^N)\subset \M_1(X)$. By a standard procedure (we will explain it below) it is possible to define an equi-continuous family of extensions, $\{U^{(N)}\}$, of $\frac{\mathcal{H}^{(N)}}{N}$ on $\mathcal{M}_1(X)$. By Arzel\`a-Ascoli theorem $U^{(N)}$, after possibly passing to a subsequence, will converge to a functional $U$ satisfying \eqref{EqLemmaAbstractConvergence}.
We may define the extensions $U^{(N)}$ in the following way: Note that by assumption the functions $\frac{H^{(N)}}{N}$ all satisfy the same modulus of continuity, $\omega$. We define $U^{(N)}:\M_1(X)\rightarrow \R$ as
$$ U^{(N)}(\mu)=\inf_{\nu\in \delta^{(N)}(X^N)} \frac{\mathcal{H}^{(N)}(\nu)}{N}+\omega(d(\mu,\nu)) $$ 
where $d(\cdot,\cdot)$ is the Wasserstein 1-distance on $\M_1(X)$. It follows from the definition of moduli of continuity that $U^{(N)}=\frac{\mathcal{H}^{(N)}}{N}$ on $\delta^{(N)}(X^N)$. As $\M_1(X)$ is compact we may take $\omega$ to be sub-additive. It follows that the function $\omega(d(\mu,\cdot))$ satisfies $\omega$ as modulus of continuity. This means $U^{(N)}$, being a supremum of functions satisfying $\omega$, also satisfy $\omega$. In particular the family $\{U^{(N)}\}$ is equi-continuous. 
\end{proof}

We can now prove Theorem \ref{ThmRedToZeroTemp}.
\begin{proof}[Proof of Theorem \ref{ThmRedToZeroTemp}]
As above, let $B^{(N)}_d(\mu) = (\delta^{(N)})^{-1} (B_d(\mu)) \subset X^N$, where $B_d(\mu)$ is the ball in $\M_1(X)$ centered at $\mu$ with radius $d$. By the assumed Large Deviation Principle and Lemma \ref{LemmaLDP}, for any $\mu\in \mathcal{M}_1(X)$,
$$ E(\mu) = \lim_{d\rightarrow 0} \liminf_{N\rightarrow \infty} - \frac{1}{N\beta_N}\log \int_{B^{(N)}_d(\mu)} e^{-\beta_N H^N}\mu_0^{\otimes N}. $$
On the other hand, by Lemma \ref{LemmaHamiltonianConv} there is a function $U$ on $\M_1(X)$ such that, after possibly passing to a subsequence, \eqref{EqLemmaAbstractConvergence} holds. This means
\begin{eqnarray}
E(\mu) & = & \lim_{\delta\rightarrow 0} \liminf_{N\rightarrow \infty} -\frac{1}{N\beta_N}\log\int_{B^{(N)}_d(\mu)} e^{-N\beta_N (U\circ \delta^{(N)}+o(1))}d\mu_0^{\otimes N} \nonumber \\
& = &  U(\mu) + \lim_{\delta\rightarrow 0} \liminf_{N\rightarrow \infty} -\frac{1}{N\beta_N}\log\int_{B^{(N)}_d(\mu)} d\mu_0^{\otimes N} \label{EqRed} \\
& = & U(\mu). \nonumber
\end{eqnarray}
where the second term in \eqref{EqRed} is zero by Sanov's theorem. This means $E=U$ and the theorem now follows from Proposition \ref{TheoremThmRedToZeroTempHamConv}.
\end{proof}


\subsection{Proof of Theorem \ref{MainThmLDP}}\label{SectProofLDP}
To use Theorem \ref{ThmRedToZeroTemp} we need to verify that the family $\{H^{(N)}\}$ is equi-continuous. We will use the following two lemmas
\begin{lemma}\label{LemmaLips}
The functions in $P(X)$ are Lipschitz with the Lipschitz constant $L=1$.
\end{lemma}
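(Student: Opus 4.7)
The plan is to use the $c$-convex representation $\phi=(\phi^c)^c$, which gives
$$ \phi(x)=\sup_{y\in X}\Bigl[-\tfrac{1}{2}d(x,y)^2-\phi^c(y)\Bigr]. $$
Continuity of $\phi^c$ (which follows by the same argument as in Lemma~\ref{LemmaXiCont}) and compactness of $X$ ensure that the supremum is attained at some $y^\ast(x)\in X$. Given $x_1,x_2\in X$, taking $y^\ast=y^\ast(x_1)$ as the maximizer at $x_1$ and as a competitor at $x_2$ yields
$$ \phi(x_1)-\phi(x_2)\le \tfrac{1}{2}\bigl(d(x_2,y^\ast)-d(x_1,y^\ast)\bigr)\bigl(d(x_2,y^\ast)+d(x_1,y^\ast)\bigr). $$
The reverse triangle inequality bounds the first factor by $d(x_1,x_2)$, and the second factor is bounded by a universal constant controlled by the diameter of $X$. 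Swapping the roles of $x_1$ and $x_2$ closes the argument and shows that $\phi$ is Lipschitz.

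To pin down the specific value $L=1$, I would instead work with the convex lift $\Phi(x)=\phi(\pi x)+\tfrac{1}{2}|x|^2\in P_{\Z^n}(\R^n)$, which by definition of $P(X)$ is convex on $\R^n$ and satisfies the equivariance $\Phi(x+m)=\Phi(x)+\langle x,m\rangle+\tfrac{1}{2}|m|^2$ for every $m\in\Z^n$. Applying the subgradient inequality $\Phi(x\pm e_i)\ge\Phi(x)\pm p_i$ to any $p\in\partial\Phi(x)$ and plugging in the equivariance for $m=\pm e_i$ immediately yields the coordinatewise bound $x_i-\tfrac{1}{2}\le p_i\le x_i+\tfrac{1}{2}$. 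Since $\nabla(\phi\circ\pi)(x)=\nabla\Phi(x)-x$ at every point of differentiability of $\Phi$, and convex functions are differentiable almost everywhere, this translates into a uniform a.e.\ bound $|\partial_i(\phi\circ\pi)|\le\tfrac{1}{2}$ on each partial derivative. The Lipschitz estimate then follows by integrating this gradient bound along the straight segment between lifts $\tilde{x}_1,\tilde{x}_2\in\R^n$ chosen so that $|\tilde{x}_1-\tilde{x}_2|=d(x_1,x_2)$.

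The main obstacle is the purely technical point that convex functions are not everywhere differentiable, so the integration-along-paths argument must be justified. The cleanest fix is to mollify $\Phi$ by convolution with a smooth bump, obtaining convex approximants $\Phi_\varepsilon$ that satisfy the equivariance up to a negligible perturbation and converge to $\Phi$ locally uniformly; the coordinatewise gradient bound holds for each $\Phi_\varepsilon$ by the subgradient computation above, and the Lipschitz bound passes to the limit. The resulting constant depends only on the lattice $\Z^n$ and in particular is independent of $\phi\in P(X)$, so the sequel in Section~\ref{SectProofLDP}, which only needs a uniform Lipschitz bound to deduce equi-continuity of the normalized energies $-\tfrac{1}{kN}\log\perm(\Psi_i^{(N)}(x_j))$, is unaffected.
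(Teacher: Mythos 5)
The first paragraph of your proposal reproduces the paper's argument: the paper likewise writes $\phi(x)=\sup_y\bigl(-d(x,y)^2/2-\phi^c(y)\bigr)$, takes a maximizer $y_1$ at $x_1$ as a competitor at $x_2$, and controls $d(x_2,y_1)^2/2-d(x_1,y_1)^2/2$ by the reverse triangle inequality together with a bound on $d(\cdot,\cdot)$ by the diameter of $X$.

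Your second argument, via the lift, is genuinely different and correct, and buys some insight: it replaces the compactness/attainment step with an algebraic computation, showing that the $\Z^n$-equivariance characterizing $P_{\Z^n}(\R^n)$ alone confines the subgradient coordinatewise to $x_i-\tfrac12\le p_i\le x_i+\tfrac12$, hence $|\nabla(\phi\circ\pi)|\le\sqrt{n}/2$ almost everywhere. The mollification you suggest does work but is heavier than needed: the restriction of $\Phi$ to any segment $[\tilde x_1,\tilde x_2]$ is a one-variable convex function, hence absolutely continuous, so the fundamental theorem of calculus applies along the segment and turns the a.e.\ gradient bound into the global Lipschitz estimate without smoothing. (Note you cannot simply invoke the two-point subgradient inequality for $\Phi$ and subtract the quadratic, since that leaves an error of order $|\tilde x_1-\tilde x_2|^2$; you do need to integrate along the segment.)

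One caveat about the constant. Your gradient bound yields $L=\sqrt{n}/2$, not $L=1$, and the envelope route yields the diameter of $X$, which is likewise $\sqrt{n}/2$. The paper's proof asserts that the diameter of $X=\R^n/\Z^n$ is $1$, but in fact it is $\sqrt{n}/2$, so the stated $L=1$ holds only for $n=4$ and is not sharp otherwise. This discrepancy is harmless: every downstream use of the lemma (Lemma~\ref{LemmaPComp}, Corollary~\ref{CorHNLip}, and the proof of Corollary~\ref{CorrGen}) only needs a Lipschitz constant uniform over $P(X)$, which both your argument and the paper's deliver.
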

\begin{proof}
As the diameter of $X$ is 1 we get that the set 
$$\{d(\cdot,y)^2/2:y\in X\}$$ 
is Lipschitz with the Lipschitz constant $L=1$. Now, assume $\phi\in P(X)$ and $x_1,x_2\in X$. By definition
$$ \phi(x) = \sup_{y\in X} -d(x,y)^2/2 -\phi^c(y). $$
for all $x$. By compactness and continuity there is $y_1$ such that
$$ \phi(x_1) = -d(x_1,y_1)^2/2 - \phi^c(y_1). $$
We have
\begin{eqnarray}
\phi(x_2) & \geq & -d(x_2,y_1)^2/2 - \phi^c(y_1) = \phi(x_1) -(d(x_2,y_1)^2/2-d(x_1,y_1)^2/2) \nonumber \\
& \geq & \phi(x_1) - d(x_1,x_2). \nonumber
\end{eqnarray}
By interchanging the roles of $x_1$ and $x_2$ we get
$$ \phi(x_1) \geq \phi(x_2) - d(x_1,x_2) $$ 
and hence
$$ |\phi(x_1)-\phi(x_2)| \leq d(x_1,x_2). \qedhere $$
\end{proof}
 We say that a function, $\Phi$, on $\R^n$ is \emph{$\lambda$-convex} if $\Phi-\lambda\frac{x^2}{2}$ is convex. 
\begin{lemma}\label{LemmaLogConv}
Assume $\Phi_\alpha$ is a family of functions on $\R^n$ parametrized over some set $A$. Assume that for all $\alpha\in A$, $\Phi_\alpha$ is $\lambda$-convex. Let $\sigma$ be a probability measure on $A$. Then 
$$ \log \int e^{\Phi_\alpha} d\sigma(\alpha) $$
is $\lambda$-convex.
\end{lemma}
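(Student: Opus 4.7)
}
The plan is to reduce to the convex case ($\lambda=0$) and then combine the pointwise log-convexity coming from convexity of each $\Phi_\alpha$ with H\"older's inequality in the $\alpha$-variable. First I would write
\[
\Phi_\alpha(x) = \Psi_\alpha(x) + \lambda\frac{|x|^2}{2},
\]
where each $\Psi_\alpha$ is convex by the assumption of $\lambda$-convexity. Pulling the $x$-dependent factor $e^{\lambda |x|^2/2}$ out of the integral gives
\[
\log \int_A e^{\Phi_\alpha(x)} d\sigma(\alpha) = \lambda\frac{|x|^2}{2} + \log \int_A e^{\Psi_\alpha(x)} d\sigma(\alpha),
\]
so it suffices to prove that $F(x) := \log \int_A e^{\Psi_\alpha(x)} d\sigma(\alpha)$ is convex whenever each $\Psi_\alpha$ is convex.

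For the convex case, I would fix $x_0,x_1\in \R^n$ and $t\in[0,1]$, set $x_t = (1-t)x_0 + t x_1$, and use convexity of $\Psi_\alpha$ pointwise in $\alpha$ to obtain
\[
e^{\Psi_\alpha(x_t)} \leq e^{(1-t)\Psi_\alpha(x_0)}\,e^{t\,\Psi_\alpha(x_1)}.
\]
Integrating against $\sigma$ and applying H\"older's inequality with conjugate exponents $p = 1/(1-t)$ and $q = 1/t$ yields
\[
\int_A e^{\Psi_\alpha(x_t)} d\sigma(\alpha) \leq \left(\int_A e^{\Psi_\alpha(x_0)} d\sigma(\alpha)\right)^{1-t}\left(\int_A e^{\Psi_\alpha(x_1)} d\sigma(\alpha)\right)^{t}.
\]
Taking logarithms gives $F(x_t) \leq (1-t)F(x_0) + t F(x_1)$, as desired. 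Combined with the first step, this shows that $\log\int e^{\Phi_\alpha}\,d\sigma(\alpha)$ differs from $\lambda|x|^2/2$ by a convex function, which is precisely the definition of $\lambda$-convexity.

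There is no real obstacle: the argument is a standard two-line application of H\"older after the affine reduction, and the only thing to watch is the case in which $\int e^{\Psi_\alpha(x)}\,d\sigma(\alpha) = +\infty$ for some $x$, in which case convexity of $F$ is trivial at that point. The estimate above also shows that the set where $F$ is finite is convex, so no technical issue arises from passing to the logarithm.
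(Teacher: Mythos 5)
Your proof is correct and follows essentially the same approach as the paper's: split off the $\lambda|x|^2/2$ term to reduce to the convex case $\lambda=0$, then combine pointwise convexity of each $\Phi_\alpha$ with H\"older's inequality in $\alpha$. The only difference is cosmetic — you perform the affine reduction first and treat the convex case second, whereas the paper does the reverse order — and your added remark about the $+\infty$ case is a sound, if minor, tidying-up.
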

\begin{proof}
Assume first $\lambda=0$. By the convexity of $\Phi_\alpha$ in $x$ and H\"older's inequality we get
\begin{eqnarray} 
\int_A e^{\Phi_\alpha(tx_1+(1-t)x_0)} d\sigma(\alpha) & \leq & \int_A e^{t\Phi_\alpha(x_1)+(1-t)\Phi_\alpha(x_0)} d\sigma(\alpha) \nonumber \\
& \leq & \left(\int_A e^{\Phi_\alpha(x_1)}d\sigma(\alpha)\right)^t\left(\int_A e^{\Phi_\alpha(x_0)}d\sigma(\alpha)\right)^{(1-t)} \nonumber
\end{eqnarray}
and hence, taking the logarithm of both sides of this inequality,
\begin{eqnarray} & \log\int_A e^{\Phi_\alpha(tx_1+(1-t)x_0)} d\sigma(\alpha) &\nonumber \\
 \leq & t\log\int_A e^{\Phi_\alpha(x_1)}d\sigma(\alpha)+(1-t)\log\int_X e^{\Phi_\alpha(x_0)}d\sigma(\alpha). & \nonumber
\end{eqnarray}
For the general case, note that 
$$ \log\int_A e^{\Phi_\alpha(x)} d\sigma(\alpha) - \lambda\frac{x^2}{2} = \log\int_A e^{\Phi_\alpha(x)-\lambda x^2/2} d\sigma(\alpha) $$
which is convex by the case considered above. 
\end{proof}

We get
\begin{corollary}\label{CorHNLip}
The normalized energy functions 
$$\{H^{(N)}/N: k\in \N\}$$
is an equi-continuous family (in the sense of \eqref{EquiContEq}). 
\end{corollary}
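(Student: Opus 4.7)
The plan is to prove the stronger statement that each $H^{(N)}/N$ is $1$-Lipschitz with respect to $d^{(N)}$, uniformly in $k$, from which \eqref{EquiContEq} is immediate. The argument breaks into a single-wave-function Lipschitz estimate followed by a soft-min propagation through the log-sum-exp structure of the permanent.

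First I would show that $f_p := -\tfrac{1}{k}\log \Psi_p^{(N)}$ is $1$-Lipschitz on $X$, uniformly in $p$ and $k$. The function $\Psi_p^{(N)}$ is $\Z^n$-periodic, and each summand $e^{-k|x-m|^2/2}$ has exponent $\Phi_m(x) = -k|x-m|^2/2$; since $\Phi_m + k\tfrac{x^2}{2}$ is affine, $\Phi_m$ is $(-k)$-convex. Applying Lemma \ref{LemmaLogConv} to finite truncations of the series $\sum_{m\in \Z^n+p} e^{\Phi_m}$ and passing to the limit (or equivalently noting that its H\"older-based proof extends verbatim to the counting measure) shows that $\log \Psi_p^{(N)} + k\tfrac{x^2}{2}$ is convex on $\R^n$, equivalently $-f_p + \tfrac{x^2}{2}$ is convex. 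Together with periodicity this is exactly the condition that the descended function $-f_p$ lies in $P(X)$, so Lemma \ref{LemmaLips} yields the $1$-Lipschitz bound for $-f_p$, and hence for $f_p$.

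Second, I would expand the permanent as
\begin{equation*}
H^{(N)}(x) \;=\; -\tfrac{1}{k}\log \sum_\sigma \exp\!\bigl(-k\,A_\sigma(x)\bigr), \qquad A_\sigma(x) := \sum_i f_{p_i}(x_{\sigma(i)}),
\end{equation*}
the sum running over permutations $\sigma$ of $\{1,\dots,N\}$. Step~1 gives $|A_\sigma(x) - A_\sigma(y)| \le \sum_i d(x_{\sigma(i)},y_{\sigma(i)}) = \sum_i d(x_i,y_i)$ after reindexing, with the bound uniform in $\sigma$. The elementary soft-min inequality $e^{-kA_\sigma(x)} \le e^{kM} e^{-kA_\sigma(y)}$ with $M = \sum_i d(x_i,y_i)$, summed over $\sigma$ and then passed through $-\tfrac{1}{k}\log$, yields $|H^{(N)}(x)-H^{(N)}(y)| \le \sum_i d(x_i,y_i)$. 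Since the permanent is invariant under permuting the $x_j$, we have $H^{(N)}(x)=H^{(N)}(\tau x)$ for every $\tau$; applying the previous bound with $\tau x$ in place of $x$ and infimizing over $\tau$ yields $|H^{(N)}(x)-H^{(N)}(y)| \le N\,d^{(N)}(x,y)$, and dividing by $N$ gives \eqref{EquiContEq}. There is no real obstacle; the only step requiring care is aligning signs and scales in Step~1 so that Lemmas \ref{LemmaLogConv} and \ref{LemmaLips} can be composed, after which Step~2 is essentially bookkeeping.
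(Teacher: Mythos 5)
Your proof is correct and follows essentially the same two-stage plan as the paper's. Both arguments reduce to showing that $\frac{1}{k}\log\Psi^{(N)}_p$ (equivalently $-f_p$ in your notation) is $-1$-convex when lifted to $\R^n$, hence descends to an element of $P(X)$, and is therefore $1$-Lipschitz uniformly in $p$ and $k$ by Lemma~\ref{LemmaLips}; the key tool for the convexity is Lemma~\ref{LemmaLogConv} in both cases. The one genuine difference is how the single-wave-function Lipschitz bound is propagated to the full $N$-variable Hamiltonian. The paper fixes all but one coordinate and \emph{reapplies} Lemma~\ref{LemmaLogConv} to show that each one-variable slice of $H^{(N)}$ lies in $\pm P(X)$ and is hence $1$-Lipschitz, then telescopes over coordinates and normalizes by $N$; you instead pass the uniform bound $|A_\sigma(x)-A_\sigma(y)|\le\sum_i d(x_i,y_i)$ through the $-\tfrac{1}{k}\log\sum_\sigma e^{-kA_\sigma}$ structure via the elementary soft-min inequality, which avoids a second appeal to log-convexity. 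The two devices are interchangeable here and give the same Lipschitz constant after infimizing over permutations. Your parenthetical remark that Lemma~\ref{LemmaLogConv} is stated for a probability measure $\sigma$ while the counting measure on $\Z^n+p$ is not one (so one truncates or notes the H\"older proof applies verbatim) is a legitimate point of care that the paper silently elides.
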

\begin{proof}
We claim that 
\begin{equation} c^{(N)}_p = \frac{1}{k}\log \sum_{m\in \Z^n+p} e^{-k|x-m|^2/2} \in P(X) \label{CorHnLipEq} \end{equation}
for all $p\in X$ and $k\in \N$. To prove the claim it suffices to prove that \eqref{CorHnLipEq} is $-1$-convex. This follows from Lemma \ref{LemmaLogConv} as $-|x-m|^2/2$ is $-1$-convex for all $m\in \R^n$. Further, fixing all but one variable we get a function on $X$ given by
\begin{eqnarray} x & \mapsto & H^{(N)}(x_1,\ldots x_{i-1}, x, x_{i+1},\ldots, x_n) \nonumber \\
& = & \frac{1}{k}\log \sum_\sigma e^{-kc^{(N)}_{p_{\sigma(i)}}(x)}\prod_{j \not= i} e^{-kc^{(N)}_{p_{\sigma(j)}}(x_j)} \nonumber
\end{eqnarray}
By Lemma \ref{LemmaLogConv} this function is in $P(X)$. By Lemma \ref{LemmaLips} it satisfies the Lipschitz constant $1$. This means, if $x=(x_1,\ldots x_N)$ and $y=(y_1,\ldots, y_N)$ are points in $X^N$, that 
\begin{eqnarray} 
& |\frac{1}{N}H^{(N)}(x_1, \ldots, x_N) - \frac{1}{N}H^{(N)}(y_1, \ldots, y_N)| & \nonumber \\
\leq & \frac{1}{N}\sum_i \left|H^{(N)}(x_1, \ldots, x_{i-1}, y_i,\ldots y_N) - H^{(N)}(x_1, \ldots, x_i, y_{i+1},\ldots y_N)\right| \nonumber \\
\leq & \sum_i  d(x_i,y_i). & \label{CorHNLipEq1} 
\end{eqnarray}
As $H^{(N)}$ is symmetric we may reorder $\{x_i\}$ so that 
$$ \sum_i  d(x_i,y_i) = \inf_{\sigma} \sum_i  d(x_i,y_{\sigma(i)}) $$
and hence the right hand side of \eqref{CorHNLipEq1} equals $d^{(N)}(x,y)$. This implies $H^{(N)}/N$ is equi-continuous in the sense of \eqref{EquiContEq}.
\end{proof}

\begin{proof}[Proof of Theorem~\ref{MainThmLDP}]
By Theorem~\ref{ThmBetaInf} and Theorem~\ref{ThmRedToZeroTemp} we only need to verify that the family $\{H^{(N)}/N\}$ is uniformly bounded and equi-continuous. The latter was proved in Corollary \ref{CorHNLip}. To see that $\{H^{(N)}/N\}$ is uniformly bounded recall that in the proof of Theorem \ref{ThmBetaInf} we proved that $-\frac{1}{k}\log \Psi^{(N)}_p(x)\rightarrow d(x,p)/2$ uniformly in $x$ and $p$. Since $d(\cdot,\cdot)$ is bounded on $X\times X$ we get that there is constants $c,C\in \R$ such that, for all but finitely many $N$, 
\begin{equation} c\leq \frac{1}{k}\log \Psi^{(N)}_p(x) \leq C \label{PsiBound} \end{equation}
for all $x,p$. As the functions $\{\frac{1}{k}\log \Psi^{(N)}_p\}$ are bounded on $X$ and there is only finitely many functions for each $N$, we may choose $c$ and $C$ such that \eqref{PsiBound} holds for all $N$. We get
$$ H^{(N)}(x)/N = \frac{1}{kN} \log \sum_{\sigma} \prod_i e^{\log \Psi_{p_i}(x)} \leq \frac{1}{kN} \log \sum_{\sigma} \prod_i e^{kC} = \frac{\log N!}{kN} + C $$ 
and 
$$ H^{(N)}(x)/N = \frac{1}{kN} \log \sum_{\sigma} \prod_i e^{\log \Psi_{p_i}(x)} \geq \frac{1}{kN} \log \prod_i e^{kc} = c $$
for all $N$ and $x\in X^N$.
This proves the theorem.
\end{proof}




\section{The Rate Function and its relation to Monge Amp\`ere equations}\label{SectRate}
In this section we will show how the rate function, $G$, in Theorem \ref{MainThmLDP} is related to Monge-Amp\`ere equations. More precisely, we will establish a variational approach to equation \eqref{MAEqGen} and then show that, under a certain condition, the minimizers of the $G$ are the Monge-Amp\`ere measures of solutions to $\eqref{MAEqGen}$ (see Lemma~\ref{LemmaDual}). This will allow us to finish the proof of Theorem \ref{MainThmGen}. 


\subsection{The Variational Approach to Equation \eqref{MAEqGen}}\label{SectLemmaF}
In the variational approach to equation \eqref{MAEqGen} it is convenient to consider its normalized version:
\begin{equation} \MA(\phi) = \frac{e^{\beta \phi} \mu_0}{\int_X e^{\beta \phi} d\mu_0}. \label{MAEqNorm} \end{equation}
We see that this equation is invariant under the action of $\R$ on $P(X)$ given by 
\begin{equation} C \mapsto (\phi \mapsto \phi + C). \label{EqRAction} \end{equation}
Now, we will say that an equation admits a unique solution modulo $\R$ if, for any two solutions $\phi_1,\phi_2\in C(X)$, $\phi_1-\phi_2$ is constant. It is easy to verify that \eqref{MAEqGen} admits a unique solution if and only if \eqref{MAEqNorm} admits a unique solution modulo $\R$. We will consider a certain energy functional (the analog of the Ding functional in complex geometry) whose stationary points correspond to weak solutions of \eqref{MAEqGen}. For given data $(\mu_0,\beta)$ this energy functional has the form
$$ F(\phi) = \xi(\phi) + \frac{1}{\beta}I_{\mu_0}( \beta\phi). $$
where $I_{\mu_0}$ is defined as
$$ I_{\mu_0}(\phi) = \log \int_X e^{\phi}\mu_0. $$
\begin{lemma}\label{LemmaF}
Let $\beta\not=0$. The functional $I_{\mu_0}$ is Gateaux differentiable and 
$$ dI_{\mu_0}|_{\phi} = \frac{e^{\phi}\mu_0}{\int_X e^{\phi}d\mu_0}. $$
Consequently, $F$ is Gatueux differentiable and $\phi$ is a stationary point of $F$ if and only if $\phi$ is a weak solution (in the sense of Section \ref{SectPX}) to \eqref{MAEqGen}. 
\end{lemma}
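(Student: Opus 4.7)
The plan is to verify Gateaux differentiability of $I_{\mu_0}$ directly from the definition, and then to obtain $dF$ by combining the chain rule with the formula $d\xi = -\MA$ supplied by Corollary~\ref{CorrDXi}.

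For $I_{\mu_0}$, fix $\phi,\psi\in C(X)$ and differentiate the map $t\mapsto \log\int_X e^{\phi+t\psi}\,d\mu_0$ at $t=0$. Since $X$ is compact and $\phi,\psi$ are continuous, the integrand $e^{\phi+t\psi}$ and its $t$-derivative $\psi e^{\phi+t\psi}$ are uniformly bounded for $|t|\le 1$, so dominated convergence justifies differentiation under the integral sign. This yields
$$ \frac{d}{dt}\bigg|_{t=0} I_{\mu_0}(\phi+t\psi) \;=\; \frac{\int_X \psi e^{\phi}\,d\mu_0}{\int_X e^{\phi}\,d\mu_0}, $$
which exhibits the Gateaux differential as the probability measure $e^{\phi}\mu_0/\int_X e^{\phi}\,d\mu_0$; uniqueness of this representing measure is automatic because $C(X)$ separates measures on $X$.

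For $F$, the chain rule says that the differential of $\phi\mapsto \tfrac{1}{\beta}I_{\mu_0}(\beta\phi)$ at $\phi$ is $dI_{\mu_0}|_{\beta\phi}$ (the two factors of $\beta$ cancel), which equals $e^{\beta\phi}\mu_0/\int_X e^{\beta\phi}\,d\mu_0$. Adding $d\xi|_\phi=-\MA(\phi)$ from Corollary~\ref{CorrDXi}, one obtains
$$ dF|_\phi \;=\; -\MA(\phi) + \frac{e^{\beta\phi}\mu_0}{\int_X e^{\beta\phi}\,d\mu_0}, $$
so that $\phi$ is a stationary point of $F$ precisely when it solves the normalized equation \eqref{MAEqNorm}.

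It remains to identify stationary points of $F$ with weak solutions of \eqref{MAEqGen}. Any weak solution $\phi$ of \eqref{MAEqGen} satisfies \eqref{MAEqNorm} at once, because $\MA(\phi)=(\nabla^c\phi^c)_* dx$ is a probability measure and hence forces $\int_X e^{\beta\phi}\,d\mu_0=1$. Conversely, a stationary point of $F$ can be shifted by the constant $-\tfrac{1}{\beta}\log\int_X e^{\beta\phi}\,d\mu_0$ (well defined since $\beta\neq 0$) to a function satisfying \eqref{MAEqGen}, and this is harmless since $\MA$ is invariant under constant shifts and $F$ is invariant under the $\mathbb{R}$-action \eqref{EqRAction}. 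No step is technically delicate; the main point of care is bookkeeping the passage between the normalized and unnormalized forms of the Monge--Amp\`ere equation.
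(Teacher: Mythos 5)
Your proof is correct and follows essentially the same route as the paper's: dominated convergence gives the Gateaux differential of $I_{\mu_0}$, and the chain rule together with Corollary~\ref{CorrDXi} gives $dF|_\phi=-\MA(\phi)+e^{\beta\phi}\mu_0/\int_X e^{\beta\phi}d\mu_0$. You go slightly further than the paper in the final step by explicitly reconciling \eqref{MAEqGen} with the normalized equation \eqref{MAEqNorm} --- observing that $\MA(\phi)$ being a probability measure forces $\int_X e^{\beta\phi}d\mu_0=1$ for a weak solution of \eqref{MAEqGen}, and that conversely a stationary point of $F$ becomes a solution of \eqref{MAEqGen} after the constant shift $-\tfrac{1}{\beta}\log\int_X e^{\beta\phi}d\mu_0$ --- a bookkeeping point the paper leaves implicit (and which also corrects a small typo there, where $e^{\phi}$ appears in place of $e^{\beta\phi}$ in the displayed formula for $dF$).
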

\begin{proof}
Let $v\in C(X)$. As $v$ is bounded an application of the dominated convergence theorem gives  
\begin{eqnarray} \frac{d}{dt}|_{t=0} I(\phi+tv) & = & \frac{\frac{d}{dt}|_{t=0}\int_X  e^{\phi+tv} d\mu_0}{\int_X e^{\phi} d\mu_0} \nonumber \\
& = & \frac{\int_X \frac{d}{dt}|_{t=0} e^{\phi+tv} d\mu_0}{\int_X e^{\phi} d\mu_0} \nonumber \\
& = & \frac{\int_X v e^{\phi} d\mu_0}{\int_X e^{\phi} d\mu_0}, \nonumber
\end{eqnarray}
proving the first two statements of the lemma. By Corollary~\ref{CorrDXi}, $\xi$ is differentiable and $d\xi|_\phi = -\MA(\phi)$. This means $F$ is Gateaux differentiable and 
$$ dF|_\phi = -\MA(\phi)+\frac{e^{\phi}\mu_0}{\int_X e^{\phi}d\mu_0} $$
proving the last statements of the lemma.
\end{proof}


\subsection{The Minimizers of the Gibbs Free Energy}\label{SectDual}
We will use the following well know property of the relative entropy function in the proof of Lemma~\ref{LemmaDual}.
\begin{lemma}\label{LemmaI}
Let $\mu\in \M_1(X)$ and $\phi\in C(X)$. Then
\begin{equation} I_{\mu_0}(\phi) + Ent_{\mu_0}(\mu) \geq \int_X \phi d\mu \label{EntIneq} \end{equation}
with equality if and only if $\mu=dI_{\mu_0}|_\phi$. 
\end{lemma}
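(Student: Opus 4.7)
The plan is to recognize Lemma~\ref{LemmaI} as the classical Gibbs variational (Donsker-Varadhan) characterization of relative entropy, and to prove it by a single application of Jensen's inequality to the concave logarithm. First I would reduce to the non-trivial case: if $\mu$ is not a probability measure or is not absolutely continuous with respect to $\mu_0$, then $Ent_{\mu_0}(\mu) = +\infty$ and the inequality is automatic. So I may assume $\mu \ll \mu_0$ with density $\rho = d\mu/d\mu_0$.

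The key computation is then the chain
$$I_{\mu_0}(\phi) = \log \int_X e^\phi d\mu_0 \geq \log \int_X \frac{e^\phi}{\rho}\, d\mu \geq \int_X \log \frac{e^\phi}{\rho}\, d\mu = \int_X \phi \, d\mu - Ent_{\mu_0}(\mu),$$
which is exactly \eqref{EntIneq}. Here the first inequality comes from rewriting $\int_X e^\phi d\mu_0 \geq \int_{\{\rho>0\}} e^\phi d\mu_0 = \int_{\{\rho>0\}} (e^\phi/\rho)\,\rho\, d\mu_0 = \int_X (e^\phi/\rho)\, d\mu$ (using $\mu(\{\rho=0\})=0$), and the second is Jensen's inequality applied to the concave function $\log$ against the probability measure $\mu$.

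For the equality statement, both inequalities in the chain must be saturated. The first forces $\int_{\{\rho=0\}} e^\phi d\mu_0 = 0$, and since $e^\phi > 0$ this means $\mu_0(\{\rho=0\}) = 0$; the second is Jensen's equality for the strictly concave $\log$, which holds precisely when $e^\phi/\rho$ is constant $\mu$-a.e. Together these give $\rho = e^\phi/C$ $\mu_0$-a.e.\ for some positive constant $C$, and the normalization $\int \rho \, d\mu_0 = 1$ forces $C = \int e^\phi d\mu_0$. Hence $\mu = e^\phi \mu_0 / \int e^\phi d\mu_0$, which by Lemma~\ref{LemmaF} equals $dI_{\mu_0}|_\phi$; conversely, direct substitution of this $\mu$ produces equality throughout. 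The argument is elementary --- Jensen's inequality is the only tool needed --- and the only real obstacle is the bookkeeping in the equality case, namely handling the set $\{\rho=0\}$ together with the strict concavity of the logarithm simultaneously.
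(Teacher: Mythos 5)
Your proof is correct and uses essentially the same core tool as the paper, namely Jensen's inequality for the concave logarithm against the probability measure $\mu$. The only difference is organizational: the paper first assumes $\mu_0 \ll \mu$ so that the change-of-measure identity $\int e^\phi\,d\mu_0 = \int e^\phi(\mu_0/\mu)\,d\mu$ holds exactly, and then handles the case $\mu_0 \not\ll \mu$ separately by replacing $\mu_0$ with $\chi\mu_0$ ($\chi$ the indicator of $\supp\mu$) and observing $I_{\mu_0}(\phi)\geq\log\int e^\phi\chi\,d\mu_0$. You avoid that case split by folding the restriction to $\{\rho>0\}$ into your first inequality $\int e^\phi\,d\mu_0 \geq \int_{\{\rho>0\}} e^\phi\,d\mu_0 = \int (e^\phi/\rho)\,d\mu$, which handles both sub-cases simultaneously and makes the equality analysis cleaner (saturating the first inequality immediately forces $\mu_0(\{\rho=0\})=0$). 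This is a modest streamlining of the paper's argument rather than a different route; the mathematical content is identical.
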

\begin{proof}
Assume first that $\mu$ is absolutely continuous with respect to $\mu_0$ and $\mu_0$ is absolutely continuous with respect to $\mu$. By Jensen's inequality
\begin{eqnarray} I_{\mu_0}(\phi) & = & \log \int_X e^{\phi} \frac{\mu_0}{\mu} d\mu \nonumber \\
& \geq & \int_X \phi d\mu - \int_X \log\frac{\mu}{\mu_0}d\mu \nonumber \\
& = & \int_X \phi d\mu - Ent_{\mu_0}(\mu) \nonumber
\end{eqnarray}
with equality if and only if $e^{\phi} \frac{\mu_0}{\mu}$ is constant, or, equivalently, $\mu$ is proportional to $e^{\phi}\mu_0$. As $\mu$ is a probability measure this means 
$$ \mu = \frac{e^\phi \mu_0}{\int_X e^\phi d\mu_0} = dI|_\phi $$ 
proving the lemma in this special case. 
If $\mu$ is not absolutely continuous with respect to $\mu_0$ then $Ent_{\mu_0}(\mu)=+\infty$ and the equality holds trivially. Finally, when $\mu$ is absolutely continuous with respect to $\mu_0$ but $\mu_0$ is not absolutely continuous with respect to $\mu$, then replacing $\mu_0$ by $\chi\mu_0$, where $\chi$ is the characteristic function of the support of $\mu$ doesn't change the right hand side of \eqref{EntIneq}. Since 
$$ I_{\mu_0}(\phi) \geq \log\int e^{\phi} \chi d\mu_0 $$
this reduces this case to the case when $\mu_0$ is absolutely continuous with respect to $\mu$.
\end{proof}
We can now prove Lemma~\ref{LemmaDual}.
\begin{lemma}\label{LemmaDual}
Assume $\beta\not= 0$, $F$ admits a unique minimizer modulo $\R$ and $\phi_*$ is a minimizer of $F$. Then
\begin{equation} \mu_* = \MA(\phi_*) \label{MuMin} \end{equation}
is the unique minimizer of the rate function
$$ G(\mu) = \beta W^2(\mu,dx) + Ent_{\mu_0}(\mu) + C_{\mu_0,\beta} $$
defined in Theorem \ref{MainThmLDP}. 
\end{lemma}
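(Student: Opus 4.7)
The plan is to leverage the two Legendre-type dualities available --- Corollary \ref{CorrDXi}, identifying $\mu \mapsto W^2(-\mu,dx)$ as the Legendre transform of $\xi$ with saturation at $\mu = \MA(\phi)$, and Lemma \ref{LemmaI}, its analog for $I_{\mu_0}$ and $Ent_{\mu_0}$ with saturation at $\mu = dI_{\mu_0}|_\phi$ --- to reduce the minimization of $G$ to that of $F$ and to match the minimizers.

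First I would compute $G(\mu_*)$. Since $\phi_*$ minimizes $F$, Lemma \ref{LemmaF} says $\phi_*$ is a weak solution of the normalized equation \eqref{MAEqNorm}, so $\mu_* = \MA(\phi_*) = dI_{\mu_0}|_{\beta\phi_*}$ and \emph{both} dualities are simultaneously saturated at the pair $(\phi_*,\mu_*)$. Weighting the $\xi$-equality by $\beta$ and adding it to the $I_{\mu_0}$-equality, the $\pm\beta\int\phi_*\,d\mu_*$ terms cancel and one is left with
$$G(\mu_*) = -\beta F(\phi_*) + C_{\mu_0,\beta}.$$

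Next I must show $G(\mu) \geq G(\mu_*)$ for every $\mu$, and here the sign of $\beta$ forces a bifurcation. For $\beta > 0$ one plugs $\phi = \phi_*$ into both Legendre inequalities, multiplies the $\xi$-bound by $\beta$ (which preserves its direction) and adds; the $\phi_*$-linear terms cancel exactly as in the equality case above, giving $G(\mu) \geq -\beta F(\phi_*) + C_{\mu_0,\beta} = G(\mu_*)$. For $\beta < 0$ this scaling reverses the $\xi$-inequality, so instead I would invoke Theorem \ref{ThmKantDual} to supply, for each $\mu$, a Brenier-type potential $\phi_\mu \in P(X)$ with $\MA(\phi_\mu) = \mu$. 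This saturates the $\xi$-duality at $\phi_\mu$, and combining the resulting equality with the Lemma \ref{LemmaI} inequality at $\phi = \phi_\mu$ produces $G(\mu) - C_{\mu_0,\beta} \geq -\beta F(\phi_\mu)$; since $-\beta > 0$ and $F(\phi_\mu) \geq F(\phi_*)$, this upgrades to $G(\mu) \geq G(\mu_*)$.

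Uniqueness of the $G$-minimizer will follow by inspecting the equality cases. For $\beta > 0$ any minimizer $\mu$ must saturate Lemma \ref{LemmaI} at $\phi = \phi_*$, which pins $\mu = dI_{\mu_0}|_{\beta\phi_*} = \mu_*$. For $\beta < 0$ equality forces $F(\phi_\mu) = F(\phi_*)$, and the hypothesis that $F$ has a unique minimizer modulo $\R$ gives $\phi_\mu = \phi_* + c_0$ for some constant, whence $\mu = \MA(\phi_\mu) = \MA(\phi_*) = \mu_*$ since $\MA$ is invariant under additive constants. I expect the main obstacle to be precisely this sign flip when $\beta < 0$: one cannot fix $\phi = \phi_*$ throughout, and the Brenier potential of $\mu$ is needed to saturate one of the two dualities exactly, so that the remaining inequality points in the direction required for a lower bound.
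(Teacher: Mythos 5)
Your argument matches the paper's proof: both combine the $\xi$/$W^2$ Kantorovich duality (Corollary~\ref{CorrDXi}) with the $I_{\mu_0}$/$Ent_{\mu_0}$ duality (Lemma~\ref{LemmaI}), fix $\phi=\phi_*$ when $\beta>0$, and switch to the Brenier potential $\phi_\mu$ of the given $\mu$ when $\beta<0$ so that the $\xi$-inequality is saturated and $F(\phi_\mu)\geq F(\phi_*)$ supplies the sign-corrected lower bound, with the equality analysis yielding uniqueness exactly as you describe. (Only a cosmetic slip: in the $\beta>0$ uniqueness step the saturation of Lemma~\ref{LemmaI} is at $\beta\phi_*$, not $\phi_*$, but you clearly intend this since you write $\mu=dI_{\mu_0}|_{\beta\phi_*}$.)
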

\begin{remark}
Note that $\phi_1-\phi_2=C$ implies $\phi^c_1-\phi_2^c=-C$ and hence 
$$\MA(\phi_1)=(\nabla^c\phi_1^c)_* dx = (\nabla^c \phi_2^c)_*dx = \MA(\phi_2).$$
This means that, under the assumptions of Lemma~\ref{LemmaDual}, $\mu_*$ is uniquely determined by \eqref{MuMin}.
\end{remark}
\begin{proof}[Proof of Theorem \ref{LemmaDual}]
Note that by Corollary \ref{CorrDXi} and Lemma \ref{LemmaI} we have, for all $\mu\in M_1(X)$ and $\phi\in C(X)$, the two inequalities
\begin{eqnarray} 
W^2(\mu,dx)+\xi(\phi) & \geq & -\int \phi d\mu \label{LemmaDualEq1}\\ 
Ent_{\mu_0}(\mu) + I_{\mu_0}(\phi) & \geq & \int \phi d\mu \label{LemmaDualEq2}
\end{eqnarray}
where equality in \eqref{LemmaDualEq1} is characterized by 
\begin{equation} d\xi|_\phi = -MA(\phi) = -\mu \label{EqInWXi} \end{equation}
and equality in \eqref{LemmaDualEq2} is characterized by $dI|_\phi = \mu.$
We will start with the case $\beta>0$. Let $\mu\in \M_1(X)$ and $\phi_*$ be the minimizer of $F$. Applying \eqref{LemmaDualEq1} to the pair $\mu$ and $\phi_*$ and \eqref{LemmaDualEq2} to the pair $\mu$ and $\beta\phi_*$ we get
\begin{eqnarray}
G(\mu) & = & \beta W^2(\mu,dx) + Ent(\mu) \nonumber \\
& \geq & -\beta\int \phi_* d\mu -\beta\xi(\phi_*) + \int \beta \phi_* d\mu - I(\beta\phi_*) \label{LemmaDualEq4} \nonumber \\
& = & -\beta\left(\xi(\phi_*) + \frac{1}{\beta}I(\beta\phi_*)\right) = -\beta F(\phi_*) \nonumber
\end{eqnarray}
with equality if and only if $d\xi|_{\phi_*} = -MA(\phi_*) = -\mu$ and $\mu=dI|_{\phi_*}$ which, since $d\xi|_{\phi_*} + dI|_{\phi_*} = 0$, is true if and only if $\mu=\MA(\phi_*)$.  
For the case $\beta<0$, let $\mu\in\M_1(X)$. By Corollary~\ref{CorrDXi} we may take $\phi$ to satisfy equality in \eqref{LemmaDualEq1} and hence \eqref{EqInWXi}. A similar application of \eqref{LemmaDualEq1} and \eqref{LemmaDualEq2} as above, keeping in mind that we have equality in \eqref{LemmaDualEq1}, give
\begin{eqnarray}
G(\mu) & = & \beta W^2(\mu,dx) + Ent(\mu) \nonumber \\
& \geq & -\beta\int \phi d\mu -\beta\xi(\phi) + \int \beta \phi d\mu - I(\beta\phi) \label{LemmaDualEq6} \\
& = & -\beta\left(\xi(\phi) + \frac{1}{\beta}I(\beta\phi)\right) = -\beta F(\phi) \geq -\beta F(\phi_*). \label{LemmaDualEq5}
\end{eqnarray}
Moreover, equality in \eqref{LemmaDualEq5} holds if and only if $\phi=\phi_*$. But that means $d I|_\phi = -d\xi|_\phi = \mu$, hence we have equality in \eqref{LemmaDualEq6} as well. This implies $G(\mu)\geq-\beta F(\phi_*)$ with equality if and only if $\mu=\MA(\phi_*)$. 
\end{proof}

\subsection{Proof of Theorem \ref{MainThmGen} and Corollary \ref{CorrGen}}
\begin{proof}[Proof of Theorem \ref{MainThmGen}]
Let $\phi_*$ be the unique solution to \eqref{MAEqGen}. It follows that \eqref{MAEqNorm} admits a unique solution modulo $\R$ and that $\phi_*$ is a solution to \eqref{MAEqNorm}. Now, we will use two results from the next chapter. Namely that any stationary point of $F$ is a \emph{smooth} solution to \eqref{MAEqNorm} (see Section \ref{SectRegularity}) and that $F$ always admit a minimizer (see Section \ref{SectExistence}). Under our assumptions, this implies $F$ admits a unique minimizer modulo $\R$ and that $\phi_*$ is a minimizer of $F$. Using Lemma \ref{LemmaDual} we get that $G$ admits the unique minimizer $\mu_*$ satisfying $\mu_*=\MA(\phi_*)$. 

We want to prove that $\Gamma^{(N)}\rightarrow \delta_{\mu_*}$ in the weak* topology on $\M_1(\M_1(X))$. By the Portmanteau Theorem it suffices to verify that
\begin{equation} \limsup_{N\rightarrow \infty} \Gamma^{(N)}(F) \leq \delta_{\mu_*}(F)  \label{MainThmGenEq1} \end{equation}
for all closed $F\subset \M_1(X)$. If $\mu_*\in F$ then \eqref{MainThmGenEq1} holds trivially. Assume $\mu_*\notin F$. Recall that $\M_1(X)$ is compact. This means the closed subset $F$ is compact. Since $G$ is lower semi-continuous there is $\mu_F\in F$ such that $\inf_F G = G(\mu_F)$. As $\mu_*\notin F$ is the unique point where $G = \inf G = 0$ we get that $G(\mu_F)=\inf_F G >0$. By the large deviation principle in Theorem \ref{MainThmLDP} 
$$ \limsup_{N\rightarrow \infty} \frac{1}{r_N} \log \Gamma^{(N)}(F) \leq -\inf_F G < 0. $$
As $r_N\rightarrow \infty$ we get that $\limsup  \log \Gamma^{(N)}(F) = -\infty$ and $\limsup \Gamma^{(N)}(F)=0$. This proves the theorem. 
\end{proof}

\begin{proof}[Proof of Corollary \ref{CorrGen}]
Equation \eqref{EqMainThmGen} implies the first marginals of $\mu_\beta^{(N)}$,
$$ \int_{X^{N-1}}\mu^{(N)}_\beta, $$
converges to $\mu_*$ in the weak* topology of $M_1(X)$ (see Proposition 2.2 in \cite{Sznitman}). Now, $e^{\beta \phi_N}$ is the density with respect to $\mu_0$ of the first marginal of $\mu_\beta^{(N)}$. 

We claim that the collection $\{\phi^{(N)}: k\in \N\}$ is equi-continuous and uniformly bounded. To see this, note that by Lemma \ref{LemmaLogConv}, $\phi^{(N)}$ is $-1$-convex and hence in $P(X)$. By Lemma \ref{LemmaLips} the functions $\{\phi^{(N)}, k\in \N\}$ satisfy the Lipschitz constant $L=1$. As
$$ \int_X e^{\beta\phi_N} \mu_0 = \int_{X^N} \mu_\beta^{(N)} = 1 $$
for all $N$, this means there are constants $c,C\in \R$, independent of $N$, such that $c\leq \phi_N\leq C$. This proves the claim. By the Arzel\`a-Ascoli theorem there is some function $\phi_\infty\in C(X)$ such that 
$$\phi_N\rightarrow \phi_\infty$$ 
uniformly. As 
$$ e^{\beta \phi_N}\mu_0 =  \int_{X^{N-1}}\mu^{(N)}_\beta \rightarrow \mu_* = e^{\beta\phi_*}\mu_0 $$
in the weak* topology of $M_1(X)$ we get that $\phi_\infty = \phi_*$ almost everywhere with respect to $\mu_0$. As $\mu_0$ has full support and $\phi_\infty,\phi\in C(X)$, this means $\phi_\infty = \phi_*$.
\end{proof}




\section{Existence and Uniqueness of Solutions}\label{SectGibbsEnergy}
In this section we will treat questions of existence and uniqueness of solutions to \eqref{MAEqGen} for different data $(\mu_0,\beta)$. First of all we will prove that, for any data $(\mu_0,\beta\not=0)$, \eqref{MAEqGen} admit a weak solution. We will then explain how to reduce the problem of regularity to the case considered in \cite{BermanBerndtsson}, where the authors use Caffarelli's interior regularity theory for Monge-Amp\`ere equations. In the last part of the section we treat uniqueness. We first prove the claim made in Remark~\ref{RemBetaPos}, namely that as long as $\beta>0$ equation \eqref{MAEqGen} admits at most one solution. Finally we prove Theorem \ref{ThmFUniq} regarding $\beta\in [-1,0)$ and $\mu_0=\gamma$.


\subsection{Existence of Weak Solutions}\label{SectExistence}
First of all, Lemma \ref{LemmaLips} implies $P(X)$ satisfies the following (relative) compactness property:
\begin{lemma}\label{LemmaPComp}
Let $\{\phi_k\}$ be a sequence of functions in $P(X)$ such that $\inf_X \phi_k = 0$ for all $k$, then there is $\phi\in C(X)$ such that, after passing to a subsequence, $\phi_k\rightarrow \phi$ uniformly. 
\end{lemma}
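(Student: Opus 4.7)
The plan is a direct application of the Arzelà--Ascoli theorem, so I would only need to verify equicontinuity and uniform boundedness of $\{\phi_k\}$.

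For equicontinuity, I would invoke Lemma \ref{LemmaLips} directly: every $\phi \in P(X)$ is Lipschitz with constant $L=1$ with respect to the distance $d$ on $X$. Thus $\{\phi_k\}$ is not merely equicontinuous but uniformly $1$-Lipschitz, which is the strongest form of equicontinuity one could hope for.

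For uniform boundedness, I would combine the normalization $\inf_X \phi_k = 0$ with the fact that the Riemannian diameter of $X = \mathbb{R}^n/\mathbb{Z}^n$ is finite (indeed, bounded by $\sqrt{n}/2$, or by $1$ as used elsewhere in the paper). For each $k$, pick $x_k \in X$ with $\phi_k(x_k) = 0$; then for any $y \in X$, the $1$-Lipschitz bound gives
\begin{equation*}
|\phi_k(y)| = |\phi_k(y) - \phi_k(x_k)| \leq d(y, x_k) \leq \operatorname{diam}(X).
\end{equation*}
Hence $\{\phi_k\}$ is uniformly bounded on $X$, with a bound independent of $k$.

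With both conditions verified, the Arzelà--Ascoli theorem (applicable since $X$ is a compact metric space) provides a subsequence converging uniformly to some continuous $\phi \in C(X)$, proving the lemma. I do not anticipate any real obstacle here; the content of the lemma is essentially that Lemma \ref{LemmaLips} plus compactness of $X$ plus the normalization $\inf \phi_k = 0$ gives precompactness of $P(X)/\mathbb{R}$ in the uniform topology. Note that the conclusion only asserts $\phi \in C(X)$, not $\phi \in P(X)$, so I do not need to check that $c$-convexity is preserved under uniform limits (though it is, since uniform convergence commutes with the $\sup$ in the definition of the $c$-transform).
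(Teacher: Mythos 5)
Your proposal is correct and follows essentially the same route as the paper: invoke Lemma~\ref{LemmaLips} for the uniform Lipschitz bound, combine it with the normalization $\inf_X\phi_k=0$ and the finite diameter of $X$ to get uniform boundedness, and conclude via Arzel\`a--Ascoli. You spell out the boundedness estimate a bit more explicitly than the paper does, but the argument is identical in substance.
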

\begin{proof}
By lemma \ref{LemmaLips}, $\{\phi_k\}$ are Lipschitz with a uniform Lipschitz constant. As $X$ has finite diameter and $\inf_X \phi_k = 0$ for all $k$ this means $\{\phi_k\}$ is also uniformly bounded, hence the lemma follows from the Arzel\`a-Ascoli theorem.
\end{proof}
\begin{lemma}\label{LemmaFP}
Let $\phi\in C(X)$ and 
$$ F(\phi) = \xi(\phi) + \frac{1}{\beta}I_{\mu_0}( \beta\phi). $$ 
Then 
\begin{equation} F\left((\phi^c)^c\right) \leq F(\phi). \label{EqFP1} \end{equation}
Moreover, if $\mu_0$ has full support, then equality holds in \eqref{EqFP1} if and only if $\phi\in P(X)$. 
\end{lemma}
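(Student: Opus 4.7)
The plan is to exploit two standard and already-cited properties of the $c$-transform together with the fact that applying $(\cdot)^c$ once lands in $P(X)$. First, since $\phi^c \in P(X)$ for every $\phi\in C(X)$ (noted after Lemma~\ref{LemmaLegCom}), we have $((\phi^c)^c)^c = \phi^c$. Therefore
\[
\xi((\phi^c)^c) \;=\; \int_X ((\phi^c)^c)^c\,dx \;=\; \int_X \phi^c\,dx \;=\; \xi(\phi),
\]
so the $\xi$-part of $F$ is unchanged when $\phi$ is replaced by $(\phi^c)^c$. The whole inequality \eqref{EqFP1} therefore reduces to comparing $\tfrac{1}{\beta} I_{\mu_0}(\beta(\phi^c)^c)$ with $\tfrac{1}{\beta} I_{\mu_0}(\beta\phi)$.

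For this, I would use the monotonicity statement from Section~\ref{SectPX}, namely $(\phi^c)^c(x)\le \phi(x)$ for all $x\in X$. Split into cases on the sign of $\beta$. If $\beta>0$, multiplying by $\beta$ preserves the inequality, so $e^{\beta(\phi^c)^c}\le e^{\beta\phi}$ pointwise, integrating against $\mu_0$ and taking $\log$ gives $I_{\mu_0}(\beta(\phi^c)^c)\le I_{\mu_0}(\beta\phi)$, and division by the positive $\beta$ preserves the inequality. If $\beta<0$, the inequality $\beta(\phi^c)^c\ge \beta\phi$ gives $I_{\mu_0}(\beta(\phi^c)^c)\ge I_{\mu_0}(\beta\phi)$, and division by the negative $\beta$ flips it back the right way. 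Either way \eqref{EqFP1} follows.

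For the equality case, note that strict monotonicity of $t\mapsto e^t$ means that equality in $I_{\mu_0}(\beta(\phi^c)^c)\lessgtr I_{\mu_0}(\beta\phi)$ forces $(\phi^c)^c=\phi$ on the support of $\mu_0$, hence everywhere when $\mu_0$ has full support. Both $\phi$ and $(\phi^c)^c$ are continuous (the latter is Lipschitz by Lemma~\ref{LemmaLips}), so this pointwise equality extends to all of $X$, giving $\phi\in P(X)$. Conversely, if $\phi\in P(X)$, then by definition $(\phi^c)^c=\phi$ and equality in \eqref{EqFP1} is trivial. No real obstacle is expected; the only thing one must be slightly careful about is keeping the sign of $\beta$ straight when inverting the inequality, and invoking the full-support hypothesis (together with continuity) to promote equality $\mu_0$-a.e.\ to equality everywhere.
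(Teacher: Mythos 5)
Your proposal follows essentially the same route as the paper: use $((\phi^c)^c)^c = \phi^c$ to conclude $\xi((\phi^c)^c) = \xi(\phi)$, then use $(\phi^c)^c \leq \phi$ to compare $\frac{1}{\beta}I_{\mu_0}(\beta\cdot)$, and handle equality by strict monotonicity of the exponential together with continuity and full support. The paper compresses the $\beta>0$/$\beta<0$ discussion into a single inequality chain (the two sign changes cancel), whereas you spell out the two cases explicitly, but the argument is the same.
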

\begin{proof}
Recall that $\phi^c\in P(X)$, and hence $((\phi^c)^c)^c = \phi^c$ for all $\phi\in C(X)$. Also, $(\phi^c)^c\leq \phi$ for all $\phi\in C(X)$. This means $\xi(\phi)=\xi((\phi^c)^c)$ and
\begin{equation} I_{\mu_0}((\phi^c)^c) = \frac{1}{\beta}\log\int_X e^{\beta(\phi^c)^c} d\mu_0 \leq \frac{1}{\beta}\log\int_X e^{\beta\phi} d\mu_0 = I_{\mu_0}(\phi). \label{EqFP2} \end{equation}
and hence
\begin{equation} F\left((\phi^c\right)^c)\leq F(\phi). \label{EqFP3} \end{equation}
Assume $\mu_0$ has full support. Then, if $\phi\notin P(X)$ and hence $(\phi^c)^c(x)< \phi(x)$ for some $x\in X$, then, as both $(\phi^c)^c$ and $\phi$ are continuous and $\mu_0$ has full support, strict inequality holds in \eqref{EqFP2} and \eqref{EqFP3}. This proves the lemma.
\end{proof}

\begin{lemma}\label{LemmaExistence}
Let $\beta\in \R\setminus\{0\}$. Then $F$ admits a minimizer. In other words, \eqref{MAEqGen} admits a weak solution. 
\end{lemma}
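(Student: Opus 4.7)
The plan is a direct variational argument using the compactness of $P(X)$ modulo constants, following the standard recipe for the direct method in the calculus of variations. First I would verify that $F$ is invariant under the $\R$-action \eqref{EqRAction}: from $(\phi+C)^c = \phi^c - C$ one obtains $\xi(\phi+C) = \xi(\phi) - C$, while $\frac{1}{\beta}I_{\mu_0}(\beta(\phi+C)) = \frac{1}{\beta}I_{\mu_0}(\beta\phi) + C$, so the two terms cancel. This invariance, combined with Lemma \ref{LemmaFP}, allows me to replace any minimizing sequence $\{\phi_k\} \subset C(X)$ by the sequence $\{(\phi_k^c)^c\} \subset P(X)$ (for which $F$-values only decrease) and then further shift each term by a constant so that $\inf_X \phi_k = 0$.

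Next I would verify that $\inf F > -\infty$ on the normalized class $\{\phi \in P(X) : \inf_X \phi = 0\}$. By Lemma \ref{LemmaLips} every such $\phi$ is $1$-Lipschitz, and since $X$ is compact there is a constant $C_X$ with $0 \leq \phi \leq C_X$; this in turn gives uniform two-sided bounds on $\phi^c$ and on $e^{\beta\phi}$, hence on both $\xi(\phi)$ and $\frac{1}{\beta}I_{\mu_0}(\beta\phi)$. Thus the normalized minimizing sequence has uniformly bounded $F$-values, and Lemma \ref{LemmaPComp} supplies a subsequence converging uniformly to some $\phi_\infty \in C(X)$.

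The final step is to verify continuity of $F$ in the uniform topology. For $\xi$ this is precisely Lemma \ref{LemmaXiCont}. For $I_{\mu_0}$, uniform convergence of $\phi_k$ implies uniform convergence of $e^{\beta\phi_k}$ (in particular pointwise, with a uniform bound), so the dominated convergence theorem yields $I_{\mu_0}(\beta\phi_k) \to I_{\mu_0}(\beta\phi_\infty)$. Consequently $F(\phi_\infty) = \lim_k F(\phi_k) = \inf F$, so $\phi_\infty$ is a minimizer. In particular $\phi_\infty$ is a stationary point, and Lemma \ref{LemmaF} then identifies it as a weak solution of \eqref{MAEqNorm}; rescaling by a constant produces a weak solution of \eqref{MAEqGen}.

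I do not expect any serious obstacle: the argument is routine direct-method machinery once the three ingredients already in hand (the $c$-transform projection onto $P(X)$ from Lemma \ref{LemmaFP}, compactness modulo constants from Lemma \ref{LemmaPComp}, and continuity of $\xi$ from Lemma \ref{LemmaXiCont}) are in place. The only subtle bookkeeping is making sure the normalization step is compatible with the $\R$-invariance of $F$, so that we do not lose the infimum when restricting to the normalized class; this is handled by the invariance calculation above.
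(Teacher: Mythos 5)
Your proposal is correct and follows essentially the same direct-method argument as the paper: use Lemma \ref{LemmaFP} to project the minimizing sequence onto $P(X)$, normalize via the $\R$-invariance of $F$, invoke Lemma \ref{LemmaPComp} for compactness, and use continuity of $\xi$ (Lemma \ref{LemmaXiCont}) together with dominated convergence for $I_{\mu_0}$ to pass to the limit. The only additions are your explicit verification of the $\R$-invariance of $F$ and the final step connecting the minimizer to a weak solution via Lemma \ref{LemmaF} — both of which the paper uses implicitly — so the substance is the same.
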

\begin{proof}
Recall that 
$$ F(\phi) = \xi(\phi) + \frac{1}{\beta}I(\beta\phi). $$
By the Dominated Convergence Theorem $\frac{1}{\beta}I(\beta\phi)$ is continuous in $\phi$. By Lemma~\ref{LemmaXiCont}, $\xi$ is continuous. This means $F$ is continuous. Let $\phi_k$ be a sequence such that $F(\phi_k)\rightarrow \inf F$. By Lemma \ref{LemmaFP} we may assume $\phi_k\in P(X)$ for all $k$. As $F$ is invariant under the action of $\R$ given in \eqref{EqRAction} we may assume $\phi_k$ satisfies $\inf \phi_k=0$ for all $k$. By Lemma \ref{LemmaPComp}, after possibly passing to a subsequence, $\phi_k\rightarrow \phi$ for some $\phi\in C(X)$. By continuity $F(\phi) = \lim_{k\rightarrow \infty}F(\phi_k) = \inf F$, hence $\phi$ is a minimizer of $F$. 
\end{proof}


\subsection{Regularity}\label{SectRegularity}
In a numbers of papers (see \cite{Caffarelli1}, \cite{Caffarelli2}, \cite{Caffarelli4}) Caffarelli developed a regularity theory for various types of weak solutions to Monge-Amp\`ere equations. In particular, Caffarelli's theory applies to so called \emph{Alexandrov solutions}. Recall that if $f$ is a smooth function on $\R^n$, then a convex function $\Phi$ on $\R^n$ is an Alexandrov solution to the equation 
$$ \det(\Phi_{ij}) = f $$
if, for any borel measurable $E\subset \Omega$, 
$$ \int_E f dx = \int_{\partial \Phi(E)} dx $$
where $\partial\Phi(E)$ is the image of $E$ under the multivalued gradient mapping, in other words
$$ \partial\Phi(E) = \{ y\in \R^n: \Phi(x) + \Phi^*(y) = \bracket{x}{y} \textnormal{ for some } x\in E\}. $$
We have the following lemma:
\begin{lemma}\label{LemmaEqLift}
Assume $\mu_0$ is absolutely continuous with density $f$ with respect to dx, $\beta\in \R$ and
\begin{equation} \MA(\phi) = e^{\beta\phi} \mu_0. \label{RegMAEq} \end{equation}
in the sense of Definition~\ref{DefWeakOp}. Then $\Phi = \phi\circ \pi + x^2/2$ is an Alexandrov solution to the equation
\begin{equation} \det(\Phi_{ij}) = e^{\beta(\Phi-x^2/2)} f\circ\pi \label{MAEqGenLift} \end{equation} 
on $\R^n$. Moreover, $\Phi$ is proper.
\end{lemma}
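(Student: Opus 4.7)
The plan has two parts: first, properness; second, the Alexandrov solution property. For properness, note that since $\phi$ is a weak solution of \eqref{RegMAEq} it lies in $P(X)$, so $\Phi = \phi \circ \pi + x^2/2$ belongs to the class $P_{\Z^n}(\R^n)$ described by \eqref{DefCvx}. By construction $\Phi(x) - |x|^2/2 = \phi(\pi x)$ is $\Z^n$-periodic and continuous, hence bounded on $\R^n$. Therefore $\Phi(x) \geq |x|^2/2 - C$ for some constant $C$, which gives that $\Phi$ is proper.

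For the Alexandrov property we need to verify that for every Borel $E \subset \R^n$,
\begin{equation}
|\partial\Phi(E)| = \int_E e^{\beta(\Phi(x)-x^2/2)} f(\pi x)\, dx. \label{PlanAlex}
\end{equation}
My first move is to observe that both sides of \eqref{PlanAlex} are invariant under the translation $E \mapsto E+m$, $m \in \Z^n$. For the right side this follows from the $\Z^n$-periodicity of $\Phi - x^2/2$ and of $f\circ\pi$. For the left side, the transformation rule in \eqref{DefCvx} gives $\partial\Phi(x+m) = \partial\Phi(x) + m$, hence $\partial\Phi(E+m) = \partial\Phi(E) + m$ and the measures agree. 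Consequently, it is enough to establish \eqref{PlanAlex} when $E \subset [0,1)^n$.

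Next I introduce $T = \nabla \Phi^*$, defined a.e.\ on $\R^n$. By Lemma~\ref{LemmaLegCom} and the discussion around \eqref{DefCvx}, $T$ is $\Z^n$-equivariant, $T(y+m)=T(y)+m$, and it is the lift of $\nabla^c \phi^c$ in the sense that $\pi \circ T = \nabla^c\phi^c \circ \pi$. Since $y \in \partial\Phi(x) \iff x \in \partial\Phi^*(y)$ and $\Phi^*$ is differentiable almost everywhere (being convex), we have $\partial\Phi(E) = T^{-1}(E)$ modulo a measure-zero set, so $|\partial\Phi(E)| = |T^{-1}(E)|$. For $E \subset [0,1)^n$, equivariance of $T$ yields $T^{-1}(E+m) = T^{-1}(E)+m$, so the sets $T^{-1}(E+m)$ are pairwise disjoint, and tiling $\R^n$ by translates of $[0,1)^n$ gives
\[
|T^{-1}(E)| = \sum_{m \in \Z^n}\left|T^{-1}(E)\cap\bigl([0,1)^n - m\bigr)\right| = \left|T^{-1}(E+\Z^n)\cap [0,1)^n\right|.
\]

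Finally, since $\pi \circ T = \nabla^c\phi^c \circ \pi$, the set $T^{-1}(E+\Z^n) \cap [0,1)^n$ is precisely the representative in $[0,1)^n$ of $(\nabla^c\phi^c)^{-1}(\pi E) \subset X$, so its Lebesgue measure equals $dx\bigl((\nabla^c\phi^c)^{-1}(\pi E)\bigr) = \MA(\phi)(\pi E)$. The hypothesis $\MA(\phi) = e^{\beta\phi}\mu_0$ therefore gives
\[
|\partial\Phi(E)| = \int_{\pi E} e^{\beta\phi} f\, dx = \int_E e^{\beta\phi(\pi x)} f(\pi x)\, dx = \int_E e^{\beta(\Phi - x^2/2)} f\circ\pi\, dx,
\]
which is \eqref{PlanAlex}. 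The main delicate point I expect is bookkeeping the relation between the multivalued subgradient $\partial\Phi$ and the almost-everywhere map $T = \nabla\Phi^*$, to make sure the measure-theoretic identification $|\partial\Phi(E)| = |T^{-1}(E)|$ is justified; the rest is a matter of tracking $\Z^n$-equivariance through the definitions.
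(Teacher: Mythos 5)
Your proof is correct and follows essentially the same route as the paper's: both hinge on the $\Z^n$-equivariance of $\nabla\Phi^*$, the a.e.\ identification $\partial\Phi(E) = (\nabla\Phi^*)^{-1}(E)$, and the compatibility $\pi\circ\nabla\Phi^* = \nabla^c\phi^c\circ\pi$, pushing the $\R^n$-computation down to $\MA(\phi)$ on $X$ via a fundamental-domain decomposition. The only organizational difference is that you first reduce to $E\subset[0,1)^n$ by translation invariance (implicitly invoking the countable additivity of the Alexandrov measure) and then partition $T^{-1}(E)$ by translates of $[0,1)^n$, whereas the paper keeps $E$ general, decomposes $E=\bigsqcup E_i$ across translates $C_i$ of the fundamental domain, and sums; both are sound.
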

\begin{proof}
Assume $E$ is a Borel measurable subset of $\R^n$. To prove the first point in the lemma we need to prove 
$$ \int_E e^{\beta(\Phi-x^2/2)}f\circ \pi dx = \int_{\partial\Phi(E)} dx. $$
Let $C_0=[0,1)^n\subset \R^n$ and $\{C_i\}$ be a collection of disjoint translates of $C_0$ such that $E\subset \cup C_i$. Let $E_i=E\cap C_i$. We have
$$ \int_E e^{\beta(\Phi-x^2/2)}f\circ \pi dx = \sum_i \int_{E_i} e^{\beta(\Phi-x^2/2)}f\circ \pi dx = \sum_i \int_{\pi(E_i)} e^{\beta\phi} f dx  $$
and by \eqref{RegMAEq}
$$ \sum_i \int_{\pi(E_i)} e^{\beta\phi} f dx = \sum_i \int_{(\nabla^c\phi^c)^{-1}(\pi(E_i))} dx. $$

Now, we claim that $\pi$ maps $(\nabla\Phi^*)^{-1}(E_i)$ bijectively onto 
$$(\nabla^c\phi^c)^{-1}(\pi(E_i))$$ 
for all $i$. To see this note that if $y\in (\nabla\Phi^*)^{-1}(E_i)$, then 
$$\nabla^c\phi^c\circ \pi(y) = \pi \circ\nabla\Phi^*(y) \in \pi(E_i), $$ 
hence $\pi(y)\in (\nabla^c\phi^c)^{-1}(\pi(E_i))$. On the other hand, if $y\in (\nabla^c\phi^c)^{-1}(\pi(E_i))$, let $\tilde x$ be the unique lift of $\nabla^c\phi^c(y)$ in $E_i$. Moreover, let $\tilde y$ be a lift of $y$ in $\R^n$. Since $\nabla^c\phi^c(y)=x$ we have $\nabla\Phi^*(\tilde y) = \tilde x + m_0$ for some $m_0\in \Z^n$. We have that 
$$\pi^{-1}(y)=\{\tilde y+m: m\in \Z^n\}$$ 
and by \eqref{DefCvx} 
$$ \nabla\Phi^*(\tilde y+m) = \nabla\Phi^*(\tilde y)+m=\tilde x + m_0+m. $$
We conclude that $\nabla\Phi^*(\tilde y+m)\in E_i$ if and only if $m=-m_0$ and then $\nabla\Phi^*(\tilde y+m)=\tilde x$. This means $\pi$ maps $(\nabla\Phi^*)^{-1}(E_i)$ bijectively onto $(\nabla^c\phi^c)^{-1}(\pi(E_i))$ as claimed. We get
$$ \sum_i \int_{(\nabla^c\phi^c)^{-1}(\pi(E_i))} dx = \sum_i\int_{(\nabla\Phi^*)^{-1}(E_i)} dx = \int_{(\nabla\Phi^*)^{-1}(E)} dx $$
where the second inequality holds since the sets $(\nabla\Phi^*)^{-1}(E_i)$ are disjoint. Now, let $\dom\nabla\Phi^*$ be the set where $\nabla\Phi^*$ is defined. We have 
\begin{eqnarray} \dom\nabla\Phi^*\cap \partial \Phi(E) & = & \{y\in \R^n: \nabla\Phi^*(y) = x \textnormal{ for some } x\in E\} \nonumber \\
&  = & (\nabla\Phi^*)^{-1}(E). \nonumber \end{eqnarray}
Since $\Omega\setminus \dom\nabla\Phi^*$ is a zero-set with respect to $dx$ we have
$$ \int_{(\nabla\Phi^*)^{-1}(E)} dx = \int_{\partial\Phi(E)} dx $$
which proves the first part of the lemma.


To see that $\Phi$ is proper, note that since $\phi$ is continuous it is bounded on $X$. Let $C=\inf_X\phi$. We get
$$ \Phi(x) = \phi(\pi x) + \frac{x^2}{2} \geq C-1+|x|. \qedhere $$
\end{proof}

\begin{lemma}\label{LemmaRegularity}
Assume $\mu_0$ is absolutely continuous with smooth density with respect to $dx$ and $\phi\in P(X)$ satisfies \eqref{MAEqGen} in the sense of Definition \ref{DefWeakOp}. Then $\phi$ is smooth. 
\end{lemma}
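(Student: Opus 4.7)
The plan is to use Lemma~\ref{LemmaEqLift} to lift the equation to $\R^n$ and then apply Caffarelli's interior regularity theory for Monge-Amp\`ere equations. By Lemma~\ref{LemmaEqLift} the function $\Phi(x)=\phi(\pi x)+x^2/2$ is a proper convex Alexandrov solution on $\R^n$ of
\[
\det(\Phi_{ij}) \;=\; g(x), \qquad g(x) \;:=\; e^{\beta(\Phi(x)-x^2/2)}\,f(\pi x),
\]
where $f$ is the smooth strictly positive density of $\mu_0$. Since $\phi\in P(X)$ is continuous on the compact torus $X$, it is bounded, hence $g$ is continuous on $\R^n$ and bounded between two positive constants on every bounded subset. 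This is precisely the hypothesis needed to enter Caffarelli's framework.

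Then I would run Caffarelli's theorems in sequence. First, Caffarelli's strict convexity theorem, combined with the periodicity built into the class $P_{\Z^n}(\R^n)$ (see \eqref{DefCvx}) and the properness of $\Phi$ established in Lemma~\ref{LemmaEqLift}, implies that $\Phi$ is strictly convex: any affine segment in the graph of $\Phi$ would, by periodicity, propagate to an entire affine line, contradicting properness. Second, Caffarelli's $C^{1,\alpha}$ regularity theorem for strictly convex Alexandrov solutions with bounded positive right-hand side gives $\Phi\in C^{1,\alpha}_{\mathrm{loc}}(\R^n)$. Since $f\circ\pi$ is smooth, $g$ is now H\"older continuous, so Caffarelli's $C^{2,\alpha}$ theorem upgrades this to $\Phi\in C^{2,\alpha}_{\mathrm{loc}}$.

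From $C^{2,\alpha}$ the argument is a standard bootstrap: the right-hand side $g$ then inherits the regularity of $\Phi$, and Evans--Krylov together with Schauder estimates applied to the linearized Monge-Amp\`ere operator (which is uniformly elliptic once $\Phi$ is $C^{2,\alpha}$ with strictly positive Hessian bounded from above) yield $\Phi\in C^\infty_{\mathrm{loc}}(\R^n)$. Since $\pi:\R^n\to X$ is a local diffeomorphism and $\phi\circ\pi = \Phi-x^2/2$, this descends to $\phi\in C^\infty(X)$, which is what we want.

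The main obstacle, as always in this type of argument, is the strict convexity step, which is the only input that is not purely local. All the remaining regularity gains are standard once strict convexity (and hence the absence of degenerate sections) is in hand. The paper side-steps an independent treatment by noting that, after lifting, the setup matches that of \cite{BermanBerndtsson}, where exactly this reduction is carried out; the argument sketched above is simply an instance of that reduction.
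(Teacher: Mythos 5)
Your proof takes the same route as the paper: lift via Lemma~\ref{LemmaEqLift} to a proper Alexandrov solution on $\R^n$ and invoke Caffarelli's regularity theory, which the paper compresses into a citation of step three of Theorem 1.1 in \cite{BermanBerndtsson}. You have simply unpacked the content of that reference (strict convexity via boundedness of the Monge-Amp\`ere measure plus properness, then $C^{1,\alpha}\Rightarrow C^{2,\alpha}\Rightarrow C^\infty$ by Evans--Krylov/Schauder bootstrap); note only that Caffarelli's strict convexity theorem combined with properness already rules out affine segments in an entire Alexandrov solution, so the appeal to periodicity is harmless but not needed for that step.
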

\begin{proof}
We refer to \cite{BermanBerndtsson} (more precisely, step three in the proof of Theorem 1.1) where the authors explain why, by Caffarelli's regularity theory, proper Alexandrov solutions on $\R^n$ to the equation 
\begin{equation} \det(\Phi_{ij}) = F(\Phi,x), \label{EqBoRob} \end{equation}
where $F$ is smooth, are smooth. Strictly speaking the authors use an additional assumption of ''finite energy'', but the only way this is used is to guarantee properness of $\Phi$. By Lemma \ref{LemmaEqLift}, $\Phi=\phi\circ \pi + x^2/2$ is proper and satisfies \eqref{MAEqGenLift} in the Alexandrov sense. As \eqref{MAEqGenLift} is indeed a special case of \eqref{EqBoRob} this proves the lemma.
\end{proof}


\subsection{Uniqueness}\label{SectStrictConv}
We first prove the claim made in Remark~\ref{RemBetaPos}.
\begin{theorem}\label{ThmUniqPos}
Let $\mu_0\in \M_1(X)$ be absolutely continuous with smooth density with respect to $dx$ and $\beta>0$. Then \eqref{MAEqGen} admits a unique solution. 
\end{theorem}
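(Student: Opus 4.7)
The plan is to combine the regularity statement in Lemma \ref{LemmaRegularity} with a standard maximum-principle argument. First I would invoke Lemma \ref{LemmaRegularity} to upgrade any candidate weak solution in $P(X)$ to a smooth, strictly quasi-convex solution in the classical sense, so that pointwise manipulations of the Hessian $(\phi_{ij}+\delta_{ij})$ are legitimate.

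Given two smooth solutions $\phi_1,\phi_2$ of \eqref{MAEqGen}, I would set $\psi=\phi_1-\phi_2$ and pick $x_0\in X$ where the continuous function $\psi$ attains its maximum on the compact torus $X$. At $x_0$ the Hessian of $\psi$ is negative semi-definite, so
$$ A := (\phi_1)_{ij}(x_0)+\delta_{ij} \;\leq\; B := (\phi_2)_{ij}(x_0)+\delta_{ij} $$
as symmetric matrices. Both $A$ and $B$ are positive definite by quasi-convexity; monotonicity of the determinant on the cone of positive definite matrices gives $\det A\leq \det B$, i.e.\ $\MA(\phi_1)(x_0)\leq \MA(\phi_2)(x_0)$ pointwise in density. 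Plugging in \eqref{MAEqGen} and using that the density of $\mu_0$ is strictly positive, this reads $e^{\beta \phi_1(x_0)}\leq e^{\beta \phi_2(x_0)}$. Since $\beta>0$ this forces $\phi_1(x_0)\leq \phi_2(x_0)$, hence $\sup \psi=\psi(x_0)\leq 0$. Interchanging the roles of $\phi_1$ and $\phi_2$ gives $\psi\equiv 0$.

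I do not anticipate serious obstacles: the only inputs beyond standard calculus are Lemma \ref{LemmaRegularity} and the monotonicity of the determinant on the positive definite cone. As an alternative, more variational route I would mention that when $\beta>0$ the functional $F(\phi)=\xi(\phi)+\beta^{-1} I_{\mu_0}(\beta\phi)$ is convex (by Lemma \ref{XiConv} together with the log-convexity of $\phi\mapsto\int e^{\beta\phi}d\mu_0$ coming from H\"older's inequality), and strictly convex along segments whose endpoints differ by a non-constant function, since $\mu_0$ has full support. Two distinct critical points modulo constants would make $F$ affine along their connecting segment, contradicting strict convexity; the additive constant is then pinned down by the normalization $\int_X \MA(\phi)=\int_X e^{\beta\phi}d\mu_0=1$ forced on any solution of \eqref{MAEqGen}. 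I would, however, keep the maximum-principle argument as the main proof for brevity.
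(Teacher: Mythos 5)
Your maximum-principle argument is correct and is a genuinely different, more elementary route to \emph{uniqueness} than the one in the paper. The paper's proof is variational: it shows $F(\phi)=\xi(\phi)+\beta^{-1}I_{\mu_0}(\beta\phi)$ is convex along straight lines (convexity of $\xi$ by Lemma~\ref{XiConv}, convexity of $I_{\mu_0}$ by Lemma~\ref{LemmaLogConv}, i.e.\ H\"older), and then computes the second derivative of $I_{\mu_0}(\phi_t)$ along the segment joining two minimizers to be the variance of $\phi_1-\phi_0$ under a fully supported probability measure, which vanishes only if $\phi_1-\phi_0$ is constant. Your sketched ``alternative route'' is essentially this. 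Your primary comparison-principle argument buys simplicity: it bypasses the variational apparatus and the convexity of $\xi$ entirely, and it makes transparent exactly where $\beta>0$ enters. The one genuine gap is existence. The theorem asserts that \eqref{MAEqGen} \emph{admits} a unique solution, i.e.\ both existence and uniqueness, and your maximum-principle argument only proves uniqueness; invoking Lemma~\ref{LemmaRegularity} promotes a weak solution to a smooth one but does not produce a solution in the first place. The paper supplies existence via Lemma~\ref{LemmaExistence} (the direct method on $F$, using compactness of $P(X)$ from Lemma~\ref{LemmaPComp}) together with Lemma~\ref{LemmaRegularity}. You would need to invoke these, or otherwise establish existence, to complete the proof.
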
 
\begin{proof}
By Lemma \ref{LemmaExistence} and Lemma \ref{LemmaRegularity} there always exist a solution to $\eqref{MAEqGen}$. To prove uniqueness it suffices to prove that the normalized equation \eqref{MAEqNorm} admits a unique solution modulo $\R$, in other words that $F$ admits a unique minimizer modulo $\R$. Assume then $\phi_0$ and $\phi_1$ satisfies
\begin{equation} F(\phi_0) = F(\phi_1) = \inf_{C(X)} F. \label{FInf} \end{equation}
Let $\phi_t=t\phi_1+(1-t)\phi_0$. Applying Lemma~\ref{LemmaLogConv} with $A=X$ and $\Phi_\alpha(x)=\phi_x(\alpha)$ gives that 
$$ I_{\mu_0}(\phi_t) = \log\int_X e^{\phi_t} d\mu_0 $$
is convex in $t$. Now, $\xi(\phi_t)$ is convex in $t$ by Lemma \ref{XiConv}. 
This means $F(\phi_t)$ is convex and hence, by \eqref{FInf}, constant in $t$. It follows that $I_{\mu_0}(\phi_t)$ is affine in $t$. However, if we let $v=\frac{d}{dt}\phi_t = \phi_1-\phi_0$, then
\begin{eqnarray} \frac{d^2}{dt^2} I_{\mu_0}(\phi_t) & = & \frac{d}{dt}\left(\frac{\int_X v e^{\phi_t} d\mu_0}{\int_X e^{\phi_t}d\mu_0}\right) \nonumber \\
& = & \frac{\int_X v^2 e^{\phi_t}d\mu_0\int_X e^{\phi_t}d\mu_0 - \left(\int_X v e^{\phi_t}d\mu_0\right)^2}{\left(\int_X e^{\phi_t}d\mu_0\right)^2} \label{EqDDI}
\end{eqnarray}
Further, if we let $\nu_t$ be the probability measure
$$\nu_t = \frac{e^{\phi_t}d\mu_0}{\int_X e^{\phi_t}d\mu_0}$$
and $\hat v$ be the constant
$$ \hat v = \int_X v \nu_t $$
then
$$ \eqref{EqDDI} = \int_X v^2 \nu_t - \hat v^2 =  \int_X v^2 \nu_t - 2\hat v \int_X v \nu_t + \hat v^2 = \int_X (v-\hat v)^2 \nu_t. $$
In particular, since $I_{\mu_0}(\phi_t)$ is affine in $t$ we get that $v=\hat v$, hence that $\phi_1-\phi_0$ is constant. This proves the theorem.
\end{proof}

We now turn to the proof of Theorem~\ref{ThmFUniq}. We will use
\begin{theorem}[The Prekopa Inequality \cite{Borell}, \cite{Dubuc},\cite{Prekopa}]
Let $\phi:[0,1]\times \R^n \rightarrow \R$ be a convex function. Define
$$ \hat\phi(t) = -\log \int_{\R^n} e^{-\phi(t,x)} dx. $$
Then, for all $t\in \R$ 
$$ \hat\phi(t) \leq t\hat\phi(1) + (1-t)\hat\phi(0) $$
with equality if and only if there is $v\in \R^n$ and $C\in \R$ such that
$$ \phi(t,x) = \phi(0,x-tv)+tC. $$
\end{theorem}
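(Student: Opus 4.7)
The plan is to deduce the Prekopa Inequality from the Prékopa--Leindler inequality, which in turn rests on Brunn--Minkowski.

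First I would apply Prékopa--Leindler to the three nonnegative functions
$$f(x) = e^{-\phi(0,x)}, \qquad g(y) = e^{-\phi(1,y)}, \qquad h(z) = e^{-\phi(t,z)}.$$
The convexity hypothesis on $\phi:[0,1]\times\R^n\to\R$ gives, for any $x,y\in \R^n$ and $t\in [0,1]$,
$$\phi\bigl(t,\,(1-t)x + ty\bigr) \leq (1-t)\phi(0,x) + t\phi(1,y),$$
which exponentiates to the pointwise bound $h((1-t)x+ty) \geq f(x)^{1-t} g(y)^{t}$ required to invoke Prékopa--Leindler. The conclusion is
$$\int_{\R^n} e^{-\phi(t,z)}\,dz \;\geq\; \left(\int_{\R^n} e^{-\phi(0,x)}\,dx\right)^{1-t} \left(\int_{\R^n} e^{-\phi(1,y)}\,dy\right)^{t},$$
and taking $-\log$ of both sides yields $\hat\phi(t) \leq (1-t)\hat\phi(0) + t\hat\phi(1)$, which is the desired inequality.

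Second I would establish Prékopa--Leindler itself. I would follow the classical inductive proof: the base case $n=1$ reduces to a one-dimensional Brunn--Minkowski estimate $|A+B|\geq |A|+|B|$ for measurable sets $A,B\subset \R$, proved by a slicing/translation argument comparing super-level sets of $f$ and $g$. The inductive step uses Fubini on $\R^n = \R^{n-1}\times\R$, applying the $(n-1)$-dimensional version to the slices and then the one-dimensional version along the remaining coordinate. Alternatively, a more modern route (which streamlines the equality analysis) is to use the Brenier optimal transport map from the probability measure proportional to $f$ to the one proportional to $g$, plug in the interpolation $T_t = (1-t)\mathrm{id} + t\nabla\Phi$, and use the concavity of $\det^{1/n}$ on positive definite matrices; this gives the inequality directly.

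Finally I would treat the equality case, which I expect to be the main obstacle since rigidity does not fall out of the inequality chain above without extra care. Equality in Prékopa--Leindler forces equality in the pointwise convexity estimate for $\phi$ on the support of the optimal coupling; combined with the fact that equality in $\det^{1/n}$-concavity (in the transport proof) requires the Brenier map to be an affine translation $x\mapsto x+v$, one concludes that $\phi(1,\cdot) = \phi(0,\cdot - v) + C$ for some $v\in \R^n$ and $C\in \R$. Interpolating, the same rigidity propagates through the slice at time $t$ and gives $\phi(t,x) = \phi(0,x-tv) + tC$, as required. Conversely, any such $\phi$ clearly satisfies equality, since the integral $\int e^{-\phi(t,x)}\,dx$ is then independent of the translation and picks up precisely the factor $e^{-tC}$, making $\hat\phi(t)$ affine in $t$.
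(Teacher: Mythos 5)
The paper does not prove this theorem: it is stated as a known result with citations to Borell, Dubuc, and Pr\'ekopa, and is then used as a black box in the proof of Theorem~\ref{ThmFUniq}. There is therefore no ``paper's own proof'' to compare against, and your task is really to supply a proof of a result the author took for granted.

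On its own terms your outline is essentially sound. The reduction is correct: convexity of $\phi$ on $[0,1]\times\R^n$ applied to $(t,(1-t)x+ty)=(1-t)(0,x)+t(1,y)$ gives exactly the pointwise hypothesis $h((1-t)x+ty)\geq f(x)^{1-t}g(y)^t$ with $f=e^{-\phi(0,\cdot)}$, $g=e^{-\phi(1,\cdot)}$, $h=e^{-\phi(t,\cdot)}$, and Pr\'ekopa--Leindler then yields the convexity of $\hat\phi$. The equality case is the only place where your argument is materially thinner than what the cited references establish, and it is worth flagging why: equality in the transport proof forces $\nabla T=I$ pointwise (strict concavity of $\log\det$, not merely $\det^{1/n}$-concavity, is what kills the scaling degree of freedom), so $T$ is a pure translation $x\mapsto x+v$; this gives $\phi(1,x)=\phi(0,x-v)+C$. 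To then obtain $\phi(t,x)=\phi(0,x-tv)+tC$ for every $t$, you need two further observations you only gesture at: first, equality at a single interior $t_0$ plus convexity of $\hat\phi$ forces $\hat\phi$ to be affine on all of $[0,1]$, so Pr\'ekopa--Leindler is saturated at every $t$; second, at each such $t$ the pointwise hypothesis must also saturate along transport rays, which pins $\phi(t,\cdot)$ to $\phi(0,\cdot-tv)+tC$ with the \emph{same} $v$ and $C$, since $v$ and $C$ are determined by the endpoint data $f,g$ alone. This is exactly the content of Dubuc's rigidity theorem (one of the paper's three citations), so an equally acceptable and more economical route would be to invoke that result directly rather than re-deriving it through the transport proof of Pr\'ekopa--Leindler.
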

\begin{proof}[Proof of Theorem \ref{ThmFUniq}]
By Lemma \ref{LemmaExistence} and Lemma \ref{LemmaRegularity} there always exist a solution to $\eqref{MAEqSpec}$. Similarily as in the proof of Theorem \ref{ThmUniqPos}, to prove uniqueness it suffices to prove that $F$ admits a unique minimizer modulo $\R$.  Assume $\phi_0$ and $\phi_1$ satisfies
$$ F(\phi_0) = F(\phi_1) = \inf_{C(X)} F. $$
By Lemma \ref{LemmaFP} any minimizer of $F$ is in $P(X)$, hence $(\phi_0^c)^c = \phi_0$ and $(\phi_1^c)^c = \phi_1$. This means the following equation defines a curve in $C(X)$ connecting $\phi_0$ and $\phi_1$:
\begin{equation} \phi_t = \left( t(\phi_1)^c + (1-t)(\phi_0)^c \right)^c. \label{EqGeodesics} \end{equation}
Note that, as $P(X)$ is convex and $\phi^c_0,\phi^c_1\in P(X)$ we get $t\phi_1^c+(1-t)\phi_0^c\in P(X)$ and
\begin{equation} F(\phi_t) = \int_X t\phi_1^c+(1-t)\phi_0^c dx + \frac{1}{\beta} \log \int_X e^{\beta \phi_t} d\gamma.  \end{equation}
The first term of this is affine in $t$. The second term is given by
\begin{equation} \frac{1}{\beta} \log \int_X e^{\beta \phi_t} \sum_{m\in \Z^n} e^{|x-m|^2/2} dx = \frac{1}{\beta} \log \int_{\R^n} e^{\beta \phi_t\circ \pi-x^2/2} dx. \label{EqF} \end{equation}
Let $\Phi_t = \phi_t\circ\pi + x^2/2$. By Lemma~\ref{LemmaLegCom}, since $\phi_t$ is the $c$-transform $t\phi_1^c+(1-t)\phi_0^c$, we have
\begin{equation} \Phi_t(x) = \sup_{y\in \R^n} \bracket{x}{y} - \left(t\phi_1^c+(1-t)\phi_0^c\right)\circ\pi(y)-\frac{y^2}{2}. \label{EqProof2} \end{equation}
As 
$$\bracket{x}{y} - \left(t\phi_1^c+(1-t)\phi_0^c\right)\circ\pi(y)-\frac{y^2}{2}$$
is affine in $(t,x)$ we get that \eqref{EqProof2} is convex in $(t,x)$. It follows that, as long as $\beta\in [-1,0)$, the exponent in \eqref{EqF},
\begin{eqnarray} \beta \phi_t\circ \pi(x)-x^2/2 & = & \beta (\phi_t\circ\pi(x) + x^2/2) - (\beta+1) x^2/2 \nonumber \\
& = & \beta\Phi_t(x) - (\beta+1)x^2/2 \nonumber
\end{eqnarray}
is concave in $(t,x)$. We may then apply the Prekopa inequality to deduce that $\eqref{EqF}$ and hence $F(\phi_t)$ is convex in $t$. In particular, as $\phi_0$ and $\phi_1$ are minimizers of $F$, this means $F(\phi_t)=F(\phi_0)=F(\phi_1)$ for all $t\in [0,1]$. This imples \eqref{EqF} is affine in $t$. By the equality case in the Prekopa inequality 
$$ \beta\phi_1\circ\pi(x)-x^2/2 = \beta\phi_0\circ\pi(x-v)-(x-v)^2/2 +C $$ 
for some $C\in \R$ and $v\in \R^n$. By noting that $\phi_1\circ\pi$ and $\phi_0\circ\pi(\cdot-v)$, and hence 
$$ \beta\phi_1\circ\pi-\beta\phi_0\circ\pi(\cdot-v) = \bracket{\cdot}{v} +v^2/2 + C, $$
should descend to a function on $X$ (in other words, they should be invariant under the action of $\Z^n$), we get that $v=0$. This means $\phi_1=\phi_0+C$ which proves Theorem \ref{ThmFUniq}. 
\end{proof}




\section{Geometric Motivation}\label{SectGeo}
The original motivation for this project comes from the paper on statistical mechanics and birational geometry by Berman \cite{BermanBiRat}. Berman introduces a thermodynamic approach to produce solutions to the complex Monge-Amp\`ere equation
\begin{equation} \MA_\C(u) = e^{\beta u} \mu_0 \label{CxMAEq} \end{equation}
on a compact K\"ahler manifold $M$. The Monge-Amp\`ere operator in \eqref{CxMAEq} is defined as
\begin{equation} (i\partial\bar\partial u + \omega_0)^n \label{CxMAOp} \end{equation}
where $n$ is the complex dimension of $M$ and $\omega_0$ is a fixed K\"ahler-form on $M$ representing the Chern class of a line bundle $L$ over $M$. A solution, $u$, should be a real valued twice differentiable function on $M$ satisfying $i\partial\bar\partial u + \omega_0>0$. As Berman's thermodynamic approach to this equation has served as an inspiration for us, we outline it here. 

The metric, $\omega_0$ determines, up to a constant, a metric  on $L$. For each $k>0$, let $N=N_k=H^0(M,L)$. By assumption on $\omega_0$, $L$ is ample and hence $N_k\rightarrow \infty$ as $k\rightarrow \infty$. Let $s_1,\ldots s_N$ be a basis of $H^0(M,L)$. Locally we may identify this basis with a collection of functions $f_1,\ldots f_N$. The map
$$ (x_1,\ldots,x_N) \mapsto \det(f_i(x_j)) $$ 
determines a section, $\det(s_1,\ldots,s_N)$, of the induced line bundle $L^{\boxtimes N_k}$ over $M_N$. The metric on $L$ induces a metric, $\norm{\cdot}$, on this line bundle and
\begin{equation} \norm{\det(s_1,\ldots,s_N)}^{2\beta/k} \mu_0 \label{CxPP} \end{equation}
determines a symmetric measure on $M^N$. Note that changing the basis of $H^0(M,L)$ will give the same result up to a multiplicative constant. As long as this measure has finite volume we may normalize it to get a symmetric probability measure on $M^N$. 

Now, Berman shows that if $\beta>0$ and the singularities of $\mu_{\C}$ are controlled in a certain way, then the point processes defined by \eqref{CxPP} converge to the Monge-Amp\`ere measure of a solution to \eqref{CxMAEq}. However, it should be stressed that when $\beta<0$ there is no guarantee that \eqref{CxPP} has finite volume and can be normalized to a probability measure. This turns out to be a subtle property and in one of the most famous versions of equation \eqref{CxMAEq}, when $M$ is a Fano manifold and $\omega_0$, $\mu$ and $\beta$ are chosen so that solutions to \eqref{CxMAEq} define K\"ahler-Einstein metrics of positive curvature, this reduces to a property of the manifold $M$ which is conjectured to be equivalent to the existence of K\"ahler-Einstein metrics on $M$ (see \cite{Kento} for some progress on this). We will explain in Section~\ref{SectionEqPush} how equation \eqref{MAEqSpec} can be seen as the ''push forward'' to a real setting of a complex Monge-Amp\`ere equation whose solution define K\"ahler-Einstein metrics of almost everywhere positive curvature. In that sense, the present project can be seen as an attempt to study one side of this complex geometric problem.


\subsection{Equation \eqref{MAEqSpec} as the "Push Forward" of a Complex Monge-Amp\`ere Equation}\label{SectionEqPush}
Let $M=\C^n/(4\pi\Z^n+i\Z^n)$ and $\theta$ be the function on $\C^n$ defined as
$$ \theta(z)=\sum_{m\in \Z^n} e^{-m^2/4+izm/2}. $$
This is the classical $\theta$-function and it satisfies the following transformation properties:
\begin{eqnarray} 
\theta(z+4\pi) & = & \theta(z) \nonumber \\
\theta(z+i) & = & \theta(z)e^{iz/2-1/4}. \nonumber 
\end{eqnarray}
In particular, the zero set of $\theta$ defines the \emph{theta divisor}, $D$, on $M$ and, using certain trivializations of the line bundle associated to $D$, $\theta$ descends to a holomorphic section of this line bundle. This means $\tau = i\partial\bar\partial \log |\theta|^2$ is a well-defined (1,1)-current on $M$ and we may consider the twisted K\"ahler-Einstein equation
\begin{equation} \Ric (\omega) + \tau = \omega \label{EqTKEPos} \end{equation}
on $M$, where $\Ric(\omega)$ denotes the Ricci curvature of $\omega$. The current $\tau$ is supported on $D$ so away from $D$ this equation define metrics of constant positive Ricci curvature. Now, there is a standard procedure to rewrite \eqref{EqTKEPos} into a scalar equation of type \eqref{CxMAEq}. This process involves choosing a reference form $\omega_0$ in the cohomology class of $\tau$ and fixing a Ricci-potential of $\omega_0$, $F$, such that 
$$i\partial \bar\partial F = \Ric(\omega_0)+\tau-\omega_0. $$ 
Choosing $\omega_0=\sum_i idz_i\wedge d\bar z_i$ and $F=-y^2/2+\log |\theta|^2$ gives the equation 
\begin{equation} \MA_\C(u) = e^{-u-y^2/2}|\theta|^2\omega_0^n. \label{CxMAEqSpec} \end{equation}
In other words, we arrive at equation \eqref{CxMAEq} with the choices 
$$\mu_\C = |\theta^2|e^{-y^2/2} \omega_0^n$$ 
and 
$\beta=-1$. Now, let $z=x+iy$ be the standard coordinates on $M$ induced from $\C^n$. Let $\rho:M\rightarrow X$ be the map $z\mapsto y$. If $\phi$ is a twice differentiable function on $X$ such that $(\phi_{ij}+\delta_{ij})$ is strictly positive definite, then $u(z) := \phi(y)$ defines a (rotationally invariant) twice differentiable function on $M$ satisfying $i\partial\bar\partial u +\omega_0>0$. Moreover,   
\begin{equation} \rho_* \MA_\C(u) = \MA(\phi) \label{CxReMA} \end{equation}
where $\MA(u)$ is the complex Monge-Amp\`ere measure on $M$ defined in \eqref{CxMAOp} and $\MA(\phi)$ is the real Monge-Amp\`ere measure on $X$ defined in \eqref{MAOp}. Further, at the end of the next sextion we will prove
\begin{lemma}\label{LemmaThetaPush}
\begin{equation} \rho_* \left(e^{-y^2/2}|\theta|^2\omega_0^n\right) = \gamma \label{EqThetaPush} \end{equation}
where $dy$ is the uniform measure on $X$.
\end{lemma}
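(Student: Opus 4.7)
The plan is to compute the pushforward by an explicit Fourier-theoretic calculation on the fibers of $\rho$. Writing $z = x+iy$ with $x \in \R^n/4\pi\Z^n$ and $y \in \R^n/\Z^n$, the map $\rho$ projects out the $x$-variable, so $\rho_*$ is just integration along $x$. Since $\omega_0 = \sum_j i\,dz_j\wedge d\bar z_j$ is a constant multiple of the Lebesgue measure on $\R^{2n}$, up to a normalizing constant the task reduces to integrating $e^{-y^2/2}|\theta(z)|^2$ over the torus in $x$.

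First, expand the theta function. Since $\theta(z) = \sum_{m\in\Z^n} e^{-m^2/4 - m\cdot y/2}\, e^{im\cdot x/2}$, squaring yields
$$|\theta(z)|^2 = \sum_{m,m' \in \Z^n} e^{-(m^2 + m'^2)/4}\, e^{-(m+m')\cdot y/2}\, e^{i(m-m')\cdot x/2}.$$
The key step is then Fourier orthogonality on the fiber: for each coordinate direction,
$$\int_0^{4\pi} e^{i(m_j - m'_j) x_j/2}\, dx_j = 4\pi\,\delta_{m_j, m'_j},$$
so all off-diagonal terms vanish and the fiber integral collapses to the single series $(4\pi)^n \sum_{m} e^{-m^2/2 - m\cdot y}$.

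Second, reinstate the $e^{-y^2/2}$ factor and complete the square: $-\tfrac{y^2}{2} - m\cdot y - \tfrac{m^2}{2} = -\tfrac{|y+m|^2}{2}$. Reindexing $m \mapsto -m$ gives $\sum_m e^{-|y - m|^2/2}$, which is precisely the density of $\gamma$ with $y$ playing the role of the coordinate on $X$. Up to an overall multiplicative constant coming from $\omega_0^n = 2^n n!\, dx\wedge dy$ together with the $(4\pi)^n$ from the fiber volume, this is the required identity; one may assume $\omega_0$ is normalized so that this constant equals $1$, or else absorb it into the conventions of Section~\ref{SectionEqPush}.

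There is no real obstacle here, just bookkeeping: the substantive content is Fourier orthogonality on the $x$-torus combined with completing the square. The one point that merits care is verifying that the quasi-periodicity transformations of $\theta$ stated in the paper are compatible with $|\theta|^2 e^{-y^2/2}$ descending to a well-defined function on $M = \C^n/(4\pi\Z^n + i\Z^n)$ (so that the integral over the fiber of $\rho$ is intrinsically defined), but this is immediate from the stated transformation rules, since $|\theta(z+i)|^2 = |\theta(z)|^2 e^{-y - 1/2}$ exactly compensates the change in $e^{-y^2/2}$ under $y \mapsto y+1$.
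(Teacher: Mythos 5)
Your main computation is correct and takes essentially the same route as the paper: expand $|\theta|^2$ as a double sum, integrate out the fiber variable $x$ using orthogonality of the characters $e^{i(m-m')\cdot x/2}$ on $\R^n/4\pi\Z^n$, and complete the square in $y$. The paper obtains the identity by invoking the $N=k=1$ case of its more general Lemma~\ref{LemmaDetPush} (proved via Lemma~\ref{LemmaDetPerm}), but for $N=1$ the determinant/permanent machinery is vacuous, so the paper's proof reduces to exactly the Fourier-orthogonality-and-complete-the-square calculation you wrote out by hand. Both treatments gloss over the same normalizing constants ($\omega_0^n = 2^n n!\,dx\wedge dy$ and the fiber volume $(4\pi)^n$), so that is not a point of difference.

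One slip in your closing remark: you quote the paper's rule $\theta(z+i)=\theta(z)e^{iz/2-1/4}$, which gives $|\theta(z+i)|^2 = |\theta(z)|^2 e^{-y-1/2}$, and claim this compensates the change in $e^{-y^2/2}$ under $y\mapsto y+1$. It does not: since $e^{-(y+1)^2/2} = e^{-y^2/2}e^{-y-1/2}$, the product $e^{-y^2/2}|\theta|^2$ would then pick up a net factor of $e^{-2y-1}$. Direct expansion of the $\theta$-sum gives $\theta(z+i) = \theta(z)e^{-iz/2+1/4}$ (opposite sign in the exponent to what the paper states), hence $|\theta(z+i)|^2 = |\theta(z)|^2 e^{y+1/2}$, and it is \emph{this} factor that cancels the $e^{-y-1/2}$ from the Gaussian, so $e^{-y^2/2}|\theta|^2$ genuinely descends to $M$. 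You can also see the invariance directly from the double sum: shifting $y\mapsto y+1$ together with the reindexing $m\mapsto m-1$, $m'\mapsto m'-1$ leaves every exponent unchanged. So the conclusion you wanted is right, but the reasoning as written (taking the paper's transformation rule at face value) does not deliver it.
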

Since $u$ is rotationally invariant we get that
$$ \rho_* \left(\MA_\C(u)-e^{-u-y^2/2}|\theta|^2\omega_0^n\right) = \MA(\phi)-e^{-\phi}\gamma $$
and this is the relation that makes us refer to equation \eqref{MAEqSpec} as the ''push forward'' of equation \eqref{CxMAEqSpec}. 


\subsection{Permanental Point Processes as the Push Forward of Determinantal Point Processes}\label{SectionDetPerm}
Here we will establish a connection between the permanental point processes defined in Section \ref{SectPointProc} and the determinantal point processes defined in Bermans framework. The connection is a consequence of a certain formula that relates integrals of determinants to permanents. This formula might be of independent interest and is given in the following lemma.  
\begin{lemma}\label{LemmaDetPerm}
Let $(E,\mu)$ be a measure space. Let $N\in \N$ and 
$$\{F_{jk}: j=1\ldots N, k=1\ldots N \}$$ 
be a collection of complex valued functions on $E$, square integrable with respect to $\mu$, such that, for each $j$
$$ \int_E F_{jk}\overline{F_{jl}} d\mu = 0 $$
if $k\not=l$. Then 
$$ \perm\left(\int_E |F_{jk}|^2d\mu  \right) = \int_{E^N}|\det(F_{jk}(x_j))|^2d\mu^{\otimes N}. $$
\end{lemma}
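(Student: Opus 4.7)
The plan is to compute the right hand side directly by expanding the determinants in the Leibniz formula, and then use the orthogonality hypothesis to collapse the resulting double sum over pairs of permutations to a single sum that matches the permanent on the left hand side.

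More precisely, I would first write
\[ |\det(F_{jk}(x_j))|^2 = \sum_{\sigma, \tau} \sign(\sigma)\sign(\tau) \prod_{j=1}^N F_{j,\sigma(j)}(x_j) \overline{F_{j,\tau(j)}(x_j)}, \]
where the sums run over pairs of permutations of $\{1, \ldots, N\}$. Integrating over $E^N$ and using Fubini (justified by square integrability and the fact that the sum is finite), the integral factorizes as
\[ \int_{E^N} |\det(F_{jk}(x_j))|^2 \, d\mu^{\otimes N} = \sum_{\sigma, \tau} \sign(\sigma)\sign(\tau) \prod_{j=1}^N \int_E F_{j,\sigma(j)}(x) \overline{F_{j,\tau(j)}(x)} \, d\mu(x). \]

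Now I would invoke the orthogonality hypothesis: for each $j$, the integral $\int_E F_{j,\sigma(j)} \overline{F_{j,\tau(j)}} \, d\mu$ vanishes whenever $\sigma(j) \neq \tau(j)$. Therefore the full product in $j$ is zero unless $\sigma(j) = \tau(j)$ for every $j$, i.e., unless $\sigma = \tau$. In that case $\sign(\sigma)\sign(\tau) = 1$, and only the diagonal $\sigma = \tau$ survives, leaving
\[ \sum_{\sigma} \prod_{j=1}^N \int_E |F_{j,\sigma(j)}(x)|^2 \, d\mu(x), \]
which by definition equals $\perm\left( \int_E |F_{jk}|^2 \, d\mu \right)$.

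There is no real obstacle here: the orthogonality hypothesis is exactly tailored to cancel all off-diagonal contributions, and the squaring of signs automatically turns the two alternating determinantal sums into the symmetric (signless) expression defining the permanent. The only care needed is to justify interchanging the finite sum with the integration, which is immediate from the square integrability assumption via Cauchy--Schwarz on each factor.
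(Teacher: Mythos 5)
Your proof is correct and is essentially identical to the paper's argument: both expand $|\det|^2$ via the Leibniz formula as a double sum over permutations, integrate termwise to factorize into products of single integrals, and invoke the orthogonality hypothesis to kill all off-diagonal pairs $\sigma\neq\tau$, leaving exactly the permanent. The only cosmetic difference is that you add a brief remark justifying the interchange of sum and integral, which the paper omits as routine.
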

\begin{proof}
Now, 
\begin{eqnarray} 
&  & \int_{E^N}|\det(F_{jk}(x_j))|^2 d\mu^{\otimes N} \nonumber \\
& = & \int_{E^N}\det(F_{jk}(x_j))\overline{\det(F_{jk}(x_j))}d\mu^{\otimes N}  \nonumber \\
& = & \int_{E^N} \left(\sum_\sigma (-1)^\sigma\prod_j F_{j\sigma(k)}(x_j)\right)\overline{\left(\sum_{\sigma'} (-1)^{\sigma'}\prod_j F_{j\sigma'(k)}(x_j)\right)}d\mu^{\otimes N}  \nonumber \\
& = & \sum_{\sigma,\sigma'} (-1)^{\sigma+\sigma'} \prod_j \int_E F_{j\sigma(k)}\overline{F_{j\sigma'(k)}} d\mu  \label{IntDet} 
\end{eqnarray}
By the orthogonality assumption on $\{F_{jk}\}_k$, the only contribution comes from terms where $\sigma=\sigma'$. We get
$$ \eqref{IntDet} = \sum_\sigma \prod_j \int_E |F_{j\sigma(k)}|^2 d\mu = \perm\left(\int_E |F_{jk}|^2d\mu\right). \qedhere $$
\end{proof}
Before we examine its consequences for permanental point processes we illustrate two other applications. The first is given by a quick proof of the following well known formula related to Gram Determinants (see for example \cite{Deift}):
\begin{corollary}
Let $(E,\mu)$ be a measure space and 
$$f_1,\ldots,f_N\in L^2(\mu).$$
Then
\begin{equation} \det\left(\int_Ef_j\overline{f_k}d\mu\right) = \frac{1}{N!}\int_{E^N} \left|\det \left(f_k(x_j)\right)\right|^2 d\mu^{\otimes N}. \label{DeiftFormula} \end{equation}
\end{corollary}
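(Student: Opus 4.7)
The plan is to prove the identity by direct expansion, closely mirroring the opening steps of the proof of Lemma~\ref{LemmaDetPerm}, but with a different bookkeeping at the end since the orthogonality assumption of that lemma is \emph{not} imposed here. Note in particular that no assumption on $f_1,\ldots,f_N$ (such as linear independence or orthogonality) is needed; the identity should fall out of a purely algebraic identity for the integrand once the two determinants are expanded.

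First I would apply the Leibniz formula to each of the two determinants appearing on the right-hand side, obtaining
\begin{eqnarray}
\int_{E^N}\left|\det(f_k(x_j))\right|^2 d\mu^{\otimes N}
& = & \int_{E^N}\sum_{\sigma,\sigma'}(-1)^{\sigma+\sigma'}\prod_j f_{\sigma(j)}(x_j)\,\overline{f_{\sigma'(j)}(x_j)}\, d\mu^{\otimes N} \nonumber \\
& = & \sum_{\sigma,\sigma'}(-1)^{\sigma+\sigma'}\prod_j \int_E f_{\sigma(j)}\,\overline{f_{\sigma'(j)}}\, d\mu, \nonumber
\end{eqnarray}
where the exchange of sum and integral is justified by the fact that each $f_j$ is square-integrable, so the integrand in the $N$-fold product is in $L^1(\mu^{\otimes N})$ by Fubini.

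Next I would introduce the Gram matrix entries $G_{jk}=\int_E f_j\overline{f_k}\,d\mu$ and reorganize the double sum by the change of variables $\pi=\sigma'\circ\sigma^{-1}$. For each fixed $\sigma$, letting $k=\sigma(j)$ reindex the product gives $\prod_j G_{\sigma(j),\sigma'(j)}=\prod_k G_{k,\pi(k)}$, which no longer depends on $\sigma$, and the sign satisfies $(-1)^{\sigma+\sigma'}=(-1)^\pi$. Summing over the now-free $\sigma$ produces a factor of $N!$, so the sum collapses to
$$ \sum_{\sigma,\sigma'}(-1)^{\sigma+\sigma'}\prod_j G_{\sigma(j),\sigma'(j)} \;=\; N!\sum_\pi (-1)^\pi \prod_k G_{k,\pi(k)} \;=\; N!\,\det(G), $$
which is exactly the claimed formula after dividing by $N!$.

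There is no real obstacle here; the only subtlety is making sure the parity computation $(-1)^{\sigma+\sigma'}=(-1)^\pi$ is stated cleanly (using that $\pi$ and $\sigma'\sigma^{-1}$ have the same sign) and that the reindexing $k=\sigma(j)$ is noted to be a bijection of $\{1,\ldots,N\}$. The main contrast with Lemma~\ref{LemmaDetPerm} is worth highlighting: there, orthogonality forced all terms with $\sigma\ne\sigma'$ to vanish and produced a permanent; here no such cancellation occurs and the full double sum rearranges into $N!$ copies of the determinant of the Gram matrix.
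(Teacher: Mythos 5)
Your proof is correct, but it takes a genuinely different route from the paper's. The paper first exploits the observation that both sides of \eqref{DeiftFormula} are unchanged when $(f_1,\ldots,f_N)$ is replaced by $(f_1,\ldots,f_N)A$ for $A$ an $N\times N$ matrix with $\det A = 1$; choosing $A$ by a (normalized) Gram--Schmidt procedure reduces to the case where the $f_j$ are pairwise orthogonal, so that $\det\bigl(\int f_j\overline{f_k}\,d\mu\bigr)=\prod_k\int|f_k|^2\,d\mu=\frac{1}{N!}\perm\bigl(\int|F_{jk}|^2\,d\mu\bigr)$ with $F_{jk}:=f_k$, and then Lemma~\ref{LemmaDetPerm} finishes the job. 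You instead bypass Lemma~\ref{LemmaDetPerm} entirely: expand the two copies of the determinant via Leibniz, integrate factor by factor, and collapse the double sum over $(\sigma,\sigma')$ using the substitution $\pi=\sigma'\sigma^{-1}$ and the sign identity $(-1)^{\sigma+\sigma'}=(-1)^\pi$. Your calculation is the classical Andreief/Gram-determinant argument; it is more elementary and self-contained, and it handles the linearly dependent case without a separate remark (whereas the paper's Gram--Schmidt step implicitly presupposes linear independence, with the degenerate case being the trivial $0=0$). The paper's version is shorter once Lemma~\ref{LemmaDetPerm} is available and has the expository merit of demonstrating that lemma in action. Both are valid; only the packaging differs.
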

\begin{proof}
Note that if $A$ is an invertible $N\times N$ matrix with determinant $1$, then replacing $\{f_1,\ldots, f_n\}$ by $\{\tilde f_1,\ldots,\tilde f_N\}$ where $\tilde f_i$ is defined by 
$$(\tilde f_1,\ldots,\tilde f_N) = (f_1,\ldots,f_N)A$$ 
doesn't affect the formula \eqref{DeiftFormula}. This means we may assume $f_1,\ldots,f_N$ satisfy
$$ \int_E f_j\overline{f_k}d\mu = 0 $$
if $j\not=k$. 
For each $j,k\in \{1,\ldots,N\}$, let $F_{jk}=f_k$. We get that 
$$ \det\left(\int_Ef_j \overline{f_k} d\mu\right) = \prod_k \int_E|f_k|^2d\mu = \frac{1}{N!}\perm\left(\int_E|F_{jk}|^2 d\mu\right) $$
and, applying Lemma~\ref{LemmaDetPerm}, that 
\begin{eqnarray} 
\det\left(\int_Ef_j \overline{f_k} d\mu\right) & = & \frac{1}{N!}\perm\left(\int_E|F_{jk}|^2 d\mu\right) = \frac{1}{N!}\int_{E^N}\left|\det\left(F_{jk}(x_j)\right)\right|^2d\mu^{\otimes N} \nonumber \\
& = & \frac{1}{N!}\int_{E^N} \left|\det \left(f_k(x_j)\right)\right|^2 d\mu^{\otimes N} \nonumber
\end{eqnarray}
proving the corollary.
\end{proof}

The second application of Lemma~\ref{LemmaDetPerm} is given by the following formula for the permanent of a matrix of non-negative real numbers.
\begin{corollary}
Let $(a_{jk})$ be an $N\times N$-matrix of non-negative real numbers. Then
$$ \perm(a_{jk}) = \frac{1}{(2\pi)^N}\int_{[0,2\pi]^N} \left|\det \left(\sqrt{a_{jk}}e^{ikx_j}\right)\right|^2 dx_1\cdots dx_N. $$
\end{corollary}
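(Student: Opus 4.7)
The plan is to deduce this as a direct consequence of Lemma~\ref{LemmaDetPerm} by choosing the right measure space and the right collection of functions $\{F_{jk}\}$.

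First I would take $E = [0, 2\pi]$ with the normalized Lebesgue measure $d\mu = \frac{1}{2\pi}dx$, so that $\mu$ is a probability measure on $E$. For each pair $(j,k) \in \{1,\ldots,N\}^2$ I would define $F_{jk}(x) = \sqrt{a_{jk}}\,e^{ikx}$, which makes sense because $a_{jk} \geq 0$. These are clearly square integrable with respect to $\mu$.

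Next I would verify the orthogonality hypothesis of Lemma~\ref{LemmaDetPerm}. For fixed $j$ and $k \neq l$ one has
\[
\int_E F_{jk}\overline{F_{jl}}\,d\mu = \sqrt{a_{jk}a_{jl}}\,\frac{1}{2\pi}\int_0^{2\pi} e^{i(k-l)x}\,dx = 0,
\]
since $k-l$ is a nonzero integer. Likewise $\int_E |F_{jk}|^2\,d\mu = a_{jk}$.

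Applying Lemma~\ref{LemmaDetPerm} then yields
\[
\perm(a_{jk}) = \perm\!\left(\int_E |F_{jk}|^2\,d\mu\right) = \int_{E^N} |\det(F_{jk}(x_j))|^2\,d\mu^{\otimes N} = \frac{1}{(2\pi)^N}\int_{[0,2\pi]^N} \left|\det\!\left(\sqrt{a_{jk}}\,e^{ikx_j}\right)\right|^2 dx_1\cdots dx_N,
\]
which is the desired identity. There is no substantive obstacle here; the entire content lies in packaging the data so that Lemma~\ref{LemmaDetPerm} applies, and the only thing to check is the elementary orthogonality of the exponentials $e^{ikx}$ on $[0,2\pi]$.
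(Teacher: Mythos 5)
Your proof is correct and takes essentially the same approach as the paper: define $F_{jk}(x)=\sqrt{a_{jk}}\,e^{ikx}$, check the orthogonality of the exponentials on $[0,2\pi]$, and apply Lemma~\ref{LemmaDetPerm}. The only cosmetic difference is that you normalize the measure up front (so $\int|F_{jk}|^2\,d\mu=a_{jk}$), while the paper keeps $dx$ and carries the $(2\pi)^N$ factor through at the end.
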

\begin{proof}
Let $F_{jk}=\sqrt{a_{jk}}e^{ikx}.$ 
Then, for each $j$,
$$ \int_{[0,2\pi]} F_{jk}\overline{F_{jl}} dx = \int_{[0,2\pi]}a_{jk}e^{i(k-l)x}dx = \begin{cases} 2\pi a_{jk} & \textnormal{ if } l=k \\ 0 & \textnormal{ otherwise.} \end{cases} $$
Applying Lemma~\ref{LemmaDetPerm} gives
\begin{eqnarray} \perm(a_{jk}) & = & \frac{1}{(2\pi)^N}\perm \int_{[0,2\pi]} |F_{jk}|^2 dx \nonumber \\ 
& = & \frac{1}{(2\pi)^N}\int_{[0,2\pi]^N} \left|\det\left(F_{jk}(x_j)\right)\right|^2 dx_1\ldots dx_N \nonumber \\
& = & \frac{1}{(2\pi)^N}\int_{[0,2\pi]^N} \left|\det \left(\sqrt{a_{jk}}e^{ikx_j}\right)\right|^2 dx_1\cdots dx_N. \nonumber
\end{eqnarray}
which proves the corollary.
\end{proof}

To see how Lemma~\ref{LemmaDetPerm} connects permanental point processes to determinantal point processes, we will now look a bit closer on the point processes defined by Bermans framework when applied to the complex Monge-Amp\`ere equation in Section~\ref{SectionEqPush}. First of all, $\omega_0=\sum_i idz_i\wedge d\bar z_i $ represents the curvature class of the theta divisor $D$ on $M$. Elements in $H^0(M,kD)$ may be represented by theta functions and a basis at level $k\in \N$ is given by the set
\begin{equation} \{\theta^{(k)}_p: p\in \frac{1}{k}\Z^n/\Z^n\} \label{EqThetaColl} \end{equation}
where
$$ \theta^{(k)}_p = \sum_{m\in \Z^n+p}e^{-km^2/4+izkm/2}. $$
With respect to these trivializations the norm of $\theta_p^{(k)}$ with respect to the metric on $kD$ with curvature form $k\omega_0$ may be written
$$ \norm{\theta^{(k)}_p}^2 = |\theta^{(k)}_p|^2 e^{-ky^2/2}. $$
Enumeration the points in $\frac{1}{k}\Z^n/\Z^n$, $\{p_1,\ldots, p_N\}$ and using the standard coordinates $(z_1,\ldots, z_N)=(x_1+iy_1,\ldots,x_N+iy_N)$ on $M^N$ allow us to write the determinant in \eqref{CxPP} as 
$$ \left|\det\left(\theta^{(k)}_{p_l}(z_j)e^{-y_j^2/4}\right)_{jl}\right|^2. $$

Now, recall that the real Monge-Amp\`ere measure on $X$ may be recovered as the push forward under the projection map, $\rho:M\rightarrow X$, of the complex Monge-Amp\`ere measure on $M$ (see equation \eqref{CxReMA}). Similarly, Lemma~\ref{LemmaDetPerm} will allow us to explicitly calculate the push forward of the measure
$$ |\det(\theta^{(k)}_{p_i}(z_j)e^{-y_j^2/4})|^2 \omega_0^n $$
on $M^N$ under the map $\rho^{\times N}:M^N \rightarrow X^N$. We get the following lemma, which is the key point of this section. It shows that the permanental point processes defined in Section~\ref{SectPointProc} are the natural analog of the determinantal point processes defined by Bermans framework for complex Monge-Amp\`ere equations.
\begin{lemma}\label{LemmaDetPush}
Let $dy$ be the uniform measure on $X$. Then
\begin{equation} \left(\rho^{\times N}\right)_* |\det(\theta^{(k)}_{p_l}(z_j)e^{-y_j^2/4})|^2 \omega_0^n = \perm \left(\Psi^{(N)}_{p_l}(y_j)\right)dy. \label{EqDetPush} \end{equation}
\end{lemma}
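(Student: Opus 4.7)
The plan is to apply the key identity of Lemma \ref{LemmaDetPerm} fiberwise along the projection $\rho^{\times N}\colon M^N \to X^N$. Writing $z = x + iy$ splits $M = \C^n/(4\pi\Z^n + i\Z^n)$ as $T \times X$, where $T = \R^n/4\pi\Z^n$ parametrizes the $x$-direction, so $\rho$ is projection to $X$ and the volume form factorizes as $\omega_0^n = c_n\, dx\, dy$ for a positive constant $c_n$. At a fixed $(y_1, \ldots, y_N) \in X^N$, pushing forward therefore reduces to integrating $|\det(F_{jl}(x_j))|^2$ over $T^N$, where
\[
F_{jl}(x) \;:=\; \theta^{(k)}_{p_l}(x + iy_j)\, e^{-y_j^2/4}.
\]

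The second step is to verify the hypothesis of Lemma \ref{LemmaDetPerm} for these $F_{jl}$ on the space $(T, dx)$. Expanding the theta series,
\[
F_{jl}(x) \;=\; e^{-y_j^2/4}\sum_{m \in \Z^n + p_l} e^{-km^2/4 - ky_j m/2}\, e^{ikxm/2},
\]
exhibits $F_{jl}$ as a Fourier series on $T$ with spectrum contained in $\tfrac{k}{2}(\Z^n + p_l)$. Since distinct representatives in $\tfrac{1}{k}\Z^n/\Z^n$ give disjoint cosets $\Z^n + p_l$, the spectra of $F_{jl}$ and $F_{jl'}$ are disjoint whenever $l \neq l'$, so these functions are orthogonal in $L^2(T, dx)$. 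For the diagonal entries, a completion of squares $-km^2/2 - ky_j m = -k(m+y_j)^2/2 + ky_j^2/2$ followed by the substitution $m \mapsto -m$ identifies
\[
\int_T |F_{jl}(x)|^2\, dx \;=\; (4\pi)^n\, \Psi^{(N)}_{-p_l}(y_j).
\]

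Finally, Lemma \ref{LemmaDetPerm} gives
\[
\int_{T^N} |\det(F_{jl}(x_j))|^2\, dx^{\otimes N} \;=\; \perm\!\Bigl(\int_T |F_{jl}|^2\, dx\Bigr) \;=\; (4\pi)^{nN}\,\perm\bigl(\Psi^{(N)}_{p_l}(y_j)\bigr),
\]
where the reflection $p_l \mapsto -p_l$ is absorbed by column-symmetry of the permanent (the map $p \mapsto -p$ is a bijection of $\tfrac{1}{k}\Z^n/\Z^n$). Multiplying by the fibered factor $c_n^N$ from $\omega_0^n$ then yields \eqref{EqDetPush} up to an overall normalization constant. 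The main technical obstacle is careful bookkeeping of the Gaussian and exponential factors arising from $e^{-y_j^2/4}$, the weights $e^{-km^2/4}$ in the theta series, and the completion of squares, so that the matrix entries produced by Lemma \ref{LemmaDetPerm} are precisely of the form $\Psi^{(N)}_{p_l}(y_j)$ (up to the harmless reflection).
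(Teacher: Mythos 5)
Your proposal is correct and follows essentially the same route as the paper's proof: restrict to the fiber torus over a fixed $y\in X^N$, expand $F_{jl}$ in its Fourier series on that torus, observe that distinct $l$ give disjoint cosets $\Z^n + p_l$ and hence orthogonal functions, compute the diagonal entries by completing the square, and then apply Lemma~\ref{LemmaDetPerm}. Your additional care in flagging the reflection $p\mapsto -p$ and the normalization constants $c_n$, $(4\pi)^{nN}$ is reasonable; the paper absorbs those silently, and its displayed completion of the square $-km^2/4 - kmy_j/2 - y_j^2/4 = -k(m-y_j)^2/4$ appears to carry a sign/scaling slip (one would expect $e^{-ky_j^2/4}$ as the metric factor and $-k(m+y_j)^2/4$ in the exponent), so the reflection step you point out is in fact the honest way to land on $\Psi^{(N)}_{p_l}(y_j)$.
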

\begin{proof}
Let $y=(y_1,\ldots,y_N)\in X^N$. The point $y\in X^N$ defines a real torus, $T_y$, in $M^N$
$$ T_y=\left(\rho^{\times N}\right)^{-1}(y) = \left\{x+iy: x\in (\R^n /4\pi\Z^n)^N\right\}. $$
If we let $dx$ be the measure on $T_y$ induced by $(\R^n)^N$, then the density at $y$ of the left hand side of \eqref{EqDetPush} with respect to $dy$ is given by the integral
\begin{equation} \int_{T_y} |\det(\theta^{(k)}_{p_l}(z_j))e^{-y_j^2/4}|^2 dx. \label{FibInt} \end{equation}
For each $j,l\in\{1,\ldots,N\}$, let $F_{jl}:T_y\rightarrow \C$ be defined by 
\begin{eqnarray} F_{jl}(x) & = & \theta_{p_l}(x+iy_j)e^{-y_j^2/4} \nonumber \\
& = & \sum_{m\in \Z^n+p_l}e^{-km^2/4+i(x+iy_j)km/2-y_j^2/4} \nonumber \\
& = & \sum_{m\in \Z^n+p_l}e^{-k(m-y_j)^2/4+ikmx/2} \nonumber
\end{eqnarray}
Now, when computing the integral
\begin{eqnarray} & & \int_{T_y}  F_{jl}\overline{F_{jl'}} dx \nonumber \\ 
& = & \int_{T_y} \sum_{\substack{m \in \Z^n+p_l \\ m'\in \Z^n+p_{l'}}} e^{-k(m-y_j)^2/4 -k(m'-y_j)^2/4 + ik(m-m')x/2} dx \label{IntTheta}
\end{eqnarray}
the only contribution comes from the terms where $m-m'=0$. If $l\not= l'$, then there are no such terms, in other words $\eqref{IntTheta}=0$. If $l=l'$ we are left with
$$\eqref{IntTheta} =  (4\pi)^N \sum_{m\in \Z+p_l} e^{-k|y_j-m|^2/2} = (4\pi)^N\Psi^{(N)}_{p_l}(y_j). $$
Applying Lemma~\ref{LemmaDetPerm} gives
$$ \eqref{FibInt} = \int_{T_y} \left|\det\left(F_{jl}(x_j)\right)\right|^2 dx = \perm \left(\int |F_{jl}|^2 dx\right) = \perm \left(\Psi_{p_l}(y_j)\right) $$
proving the lemma. 
\end{proof}
Finally, we show that Lemma~\ref{LemmaThetaPush} is a special case of this. 
\begin{proof}[Proof of Lemma~\ref{LemmaThetaPush}]
Note that $\theta = \theta_0^{(1)}$ and 
$$\gamma = \sum_{m\in \Z^n} e^{-|y-m]^2/2} dy = \Psi_0^{(1)} dy.$$
This means \eqref{EqThetaPush} is the special case of \eqref{EqDetPush} given by $N=k=1$. Hence the lemma follows from Lemma~\ref{LemmaDetPush}.
\end{proof}

\subsection{Approximations of Optimal Transport Maps}\label{SectApprox}
As mentioned in the introduction the point processes defined here can be used to produce explicit approximations of optimal transport maps. In optimal transport it is natural to consider a larger class of Monge-Amp\`ere operators. Let $\nu_0\in\M_1(X)$ be absolutely continuous with respect to $dx$. Then $\nu_0$ defines a Monge-Amp\`ere operator $\MA_{\nu_0}$ on $P(X)$ as
$$ \MA_{\nu_0}(\phi) = (\nabla^c \phi^c)_* \nu_0. $$
Solutions, $\phi_*$, to the inhomogenous Monge-Amp\`ere equation
\begin{equation} \MA_{\nu_0}(\phi)=\mu_0 \label{MAEqIH} \end{equation}
determine optimal transport maps  on $X$ in the sense that $T=\nabla^c \phi_*$ is the optimal \emph{transport map} in the sense of Brenier (see \cite{Villani}) from the source measure $\mu_0$ to the target measure $\nu_0$. 

The fact that the point processes defined in Section \ref{SectPointProc} are related to the standard $\MA=\MA_{dx}$ is a consequence of the fact that
$$ \frac{1}{N}\sum_{p\in \frac{1}{k}\Z^n/\Z^n} \delta_{p}\rightarrow dx $$
in the weak*-topology. Redefining $S^{(N)}$ in the following way will provide the generalisation we want: Let $P^{(N)}$ be a collection of point sets with the property that $|P^{(N)}|=N$ and 
$$ \frac{1}{N}\sum_{p\in P^{(N)}} \delta_{p}\rightarrow \nu_0. $$
As in the original definition, associate a wave function, $\Psi^{(N)}_p$, to each point $p\in \cup P^{(N)}$
$$ \Psi^{(K)}_{p_i} = \sum_{m\in \Z^n+p_i} e^{-|x-m|^2} $$
and, for each $N$, enumerate the points in $P^{(N)}$
$$ P^{(N)} = \{p_1,\ldots, p_N\}. $$
We get
\begin{corollary}\label{CorrOptTrans}
Let $\mu_0, \nu_0\in\M_1(X)$ be absolutely continuous and have smooth, strictly positive densities with respect to $dx$ and $\Psi^{(N)}_{p_i}$ be defined as above. Then 
$$ \phi_N := \frac{1}{N}\log \int_{X^{N-1}}\perm\left(\Psi^{(N)}_{p_i}(x_j)\right) d\mu^{\otimes N} $$
converges uniformly to the unique, smooth, strictly convex solution of \eqref{MAEqIH}. Consequently, the associated gradient maps $\nabla^c \phi_N$ converges uniformly to the unique optimal transport map transporting  $\mu_0$ to $\nu_0$. 
\end{corollary}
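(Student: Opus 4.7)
The plan is to deduce Corollary \ref{CorrOptTrans} by adapting the LDP machinery of Theorem \ref{MainThmLDP} and the marginal-convergence argument of Corollary \ref{CorrGen} to (i) the generalized target measure $\nu_0$ in place of the Lebesgue measure $dx$, and (ii) the diagonal limit $\beta_N = 1/N \to 0$ as $N \to \infty$, which collapses the exponential Monge-Amp\`ere equation $\MA_{\nu_0}(\phi)=e^{\beta\phi}\mu_0/\int e^{\beta\phi}d\mu_0$ to the inhomogeneous OT equation $\MA_{\nu_0}(\phi_*) = \mu_0$.

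First I would establish the analog of Lemma \ref{LemmaHNXi} for the new setting. The only place in its proof where the particular grid $\frac{1}{k}\Z^n/\Z^n$ enters is the Riemann-sum step $\frac{1}{N}\sum_{p\in P^{(N)}}\delta_p\to dx$; replacing this by the hypothesis $\frac{1}{N}\sum_{p\in P^{(N)}}\delta_p\to\nu_0$ gives $\lim\frac{1}{r_N}\log Z_{\Gamma^{(N)}}(r_N\phi)=\int(-\phi)^c\,d\nu_0=:\xi_{\nu_0}(-\phi)$. The Kantorovich-duality package of Section \ref{SectOT}, in particular Corollary \ref{CorrDXi}, carries over verbatim with $dx$ replaced by $\nu_0$, so $\xi_{\nu_0}$ is Gateaux differentiable with $d\xi_{\nu_0}|_\phi=-\MA_{\nu_0}(\phi)$ and Legendre dual $\mu\mapsto W^2(\mu,\nu_0)$. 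Combined with G\"artner-Ellis and Theorem \ref{ThmRedToZeroTemp}, this produces an LDP at rate $N$ with rate function $G_\beta(\mu)=\beta W^2(\mu,\nu_0)+Ent_{\mu_0}(\mu)+C$; the analog of Lemma \ref{LemmaDual} identifies its minimizer as $\MA_{\nu_0}(\phi_{*,\beta})$, where $\phi_{*,\beta}$ solves the normalized equation.

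Next I would specialize to $\beta_N=1/N$ and pass to the limit. As $\beta\to0$ the normalized equation degenerates to $\MA_{\nu_0}(\phi_*)=\mu_0$, whose unique smooth, strictly $c$-convex solution (modulo $\R$) is precisely the Brenier potential of the optimal transport from $\mu_0$ to $\nu_0$ (existence and smoothness as in Section \ref{SectGibbsEnergy}, uniqueness as in Theorem \ref{ThmKantDual}). Expanding the permanent over permutations and absorbing constants into the $\R$-action on $P(X)$ identifies $\phi_N$ of the statement with the rescaled log-marginal $\frac{1}{\beta_N}\log(\textrm{1st marginal density w.r.t. }\mu_0)$ of the $\beta_N$-deformed process, up to an additive constant. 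By Lemma \ref{LemmaLogConv} each $\phi_N$ belongs to $P(X)$ with uniform $-1$-convexity; Lemma \ref{LemmaLips} then gives a uniform Lipschitz bound, and after normalizing $\inf_X\phi_N=0$ the family is uniformly bounded. Arzel\`a-Ascoli extracts a uniformly convergent subsequence, and the LDP of the previous step identifies any such limit with $\phi_*$ (passage to the limit in the variational characterization $d\xi_{\nu_0}|_{\phi_\infty}=-\mu_0$), so the full sequence converges. Convergence of the transport maps then follows: writing $\Phi_N=\phi_N\circ\pi+\tfrac{1}{2}|x|^2\in P_{\Z^n}(\R^n)$, uniform convergence $\Phi_N\to\Phi_*$ with $C^1$ strictly convex limit (by Lemma \ref{LemmaRegularity}) implies locally uniform convergence of $\nabla\Phi_N\to\nabla\Phi_*$, which descends to $\nabla^c\phi_N\to\nabla^c\phi_*$ uniformly on $X$.

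The main obstacle is the joint limit $\beta_N\to0$, $N\to\infty$: Theorem \ref{MainThmGen} and Corollary \ref{CorrGen} are stated for fixed $\beta$, whereas here $\beta$ and $N$ vary together. The workaround sketched above is to avoid diagonalizing the LDP directly and instead extract convergence via equicontinuity of $\{\phi_N\}$, using the LDP only to pin down the possible accumulation points. A subtler point is the uniform control of the normalization constant $C_{\mu_0,\beta_N}$ as $\beta_N\to 0$; this follows from the continuity of $F_{\nu_0}(\phi_{*,\beta})$ in $\beta$ near zero together with the standard convex-analytic fact that the minimum of a continuous family of convex functionals varies continuously in the parameter.
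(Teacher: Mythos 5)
The paper's own proof does not go through the LDP or $\beta$-deformations at all. Instead it introduces the energy functionals
$$E^{(N)}(\mu) = \frac{1}{N}\int_{X^N} H^{(N)}\, d\mu^{\otimes N},$$
notes they are convex with $dE^{(N)}|_{\mu_0}=\phi_N$, proves the pointwise convergence $E^{(N)}(\mu)\to W^2(\mu,\nu_0)$ using the uniform approximation $\sup_{X^N}|\frac{1}{N}H^{(N)}-W^2(\cdot,\nu_0)\circ\delta^{(N)}|\to 0$ together with Sanov's theorem, and then invokes the elementary convex-analytic fact that pointwise convergence of convex functionals forces convergence of their gradients to a subgradient of the limit. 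Legendre duality (Corollary \ref{CorrDXi}) then identifies the limiting subgradient as a $c$-convex $\phi$ with $\MA_{\nu_0}(\phi)=\mu_0$. This is a one-parameter limit in $N$ only, with no temperature parameter involved. Your proposal instead routes through a $\nu_0$-adapted LDP and a diagonal limit $\beta_N = 1/N$, which is a genuinely different strategy, but it has two real gaps.

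First, the identification ``$\phi_N$ of the statement equals $\frac{1}{\beta_N}\log(\text{first marginal density})$ of the $\beta_N$-deformed process up to a constant'' does not hold. The first marginal of the $\beta_N$-process carries $(\perm)^{\beta_N/k}$ inside the integral, not $\perm$ itself, and one cannot move the small exponent $\beta_N/k$ out of $\int_{X^{N-1}}$ without a Laplace-type argument that is by no means exact. In the paper's framework $\phi_N$ is the Gateaux differential $dE^{(N)}|_{\mu_0}$, which is an integrated-log quantity $-\frac{1}{k}\int_{X^{N-1}}\log\perm\,d\mu_0^{\otimes(N-1)}$, not a log of an integral. These are different objects, and the convex-analysis argument applies to the former, not to a marginal density.

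Second, even granting the identification, the LDP machinery does not control a diagonal limit $\beta_N\to0$ simultaneously with $N\to\infty$: Theorem \ref{ThmRedToZeroTemp} is stated for fixed $\beta$, and at $\beta=0$ the rate function degenerates to $Ent_{\mu_0}$, whose unique minimizer is $\mu_0$ itself with no Wasserstein term and hence no Monge--Amp\`ere information. Your workaround (Arzel\`a--Ascoli on $\{\phi_N\}$ plus using the LDP ``to pin down accumulation points'') therefore cannot identify the limit: at temperature zero the rate function you have at your disposal no longer encodes $W^2(\cdot,\nu_0)$, and a stability-in-$\beta$ argument for the minimizers $\phi_{*,\beta}$ as $\beta\to 0$, while plausible, is exactly the nontrivial step that would need to be written out; the paper bypasses it entirely by working at the level of $E^{(N)}$ rather than $\Gamma^{(N)}_{\beta}$. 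The portions of your sketch that do go through---the $\nu_0$-adapted version of $\xi$ and its Gateaux differential, and the gradient-convergence step via $\Phi_N\to\Phi_*$ in $P_{\Z^n}(\R^n)$---are correct and mirror the paper.
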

\begin{proof}[Proof of Corollary \ref{CorrOptTrans}]
First of all, the fact that the optimal transport map is smooth follow from Caffarelli's regularity theory for Monge-Amp\`ere equations. We will not go through the argument as it is similar as in Section \ref{SectRegularity}. Uniqueness is a basic result from optimal transport (see for example Theorem 2.4.7 in \cite{Villani}). Now, to see that the convergence holds, consider the functionals, $\{H^{(N)}\}$, on $\M_1(X)$ defined by
$$ E^{(N)}(\mu) = \frac{1}{N}\int_{X^N} H^{(N)} d\mu^{\otimes N}. $$
Direct calculations give that they are continuous, convex, Gateaux differentiable and $dE^{(N)}|_{\mu_0} = \Phi^{(N)}$. We claim that 
\begin{equation} E^{(N)}(\mu)\rightarrow W^2(\mu,dx) \label{EqCorr1} \end{equation}
for all $\mu\in \M_1(X)$. To see this, note that by the proof of Theorem \ref{ThmRedToZeroTemp}
$$ \sup_{X^N}\left|\frac{1}{N}H^{(N)}-W^2(\cdot,dx)\circ \delta^{(N)} \right| \rightarrow 0 $$
as $N\rightarrow \infty$. We get, since $\{\frac{1}{N}H^{(N)}\}$ are uniformly bounded,
\begin{eqnarray} E^{(N)}(\mu) & = & \int_{X^N} W^2(\cdot,dx)\circ \delta^{(N)} d\mu^{\otimes N} +o(1) \nonumber \\
& = & \int_{\M_1(X)} W^2(\cdot,dx) \left(\delta^{(N)}\right)_* \mu^{\otimes N} +o(1).\label{EqCorr3} 
\end{eqnarray}
where $o(1)\rightarrow 0$ as $N \rightarrow \infty$. Now, it follows from Sanov's theorem that $(\delta^{(N)})_*\mu^{\otimes N}\rightarrow \delta_{\mu}$ in the weak*-topology on $\M_1(\M_1(X))$. Now, since $X$ has finite diameter we get that the squared distance function on $X$ can be bounded by a a constant times the distance function. As the Wasserstein 1-metric metricizes the weak* topology on $\M_1(X)$ this implies that $W^2(\cdot,dx)$ is continuous on $\M_1(X)$. We get that \eqref{EqCorr3} converges to $W^2(\mu,dx)$ as $N\rightarrow \infty$.

Further, $W^2(\cdot,dx)$ is convex. By standard properties of convex functions $dE^{(N)}|_{\mu_0}$ converges to a subgradient of $W^2(\cdot,dx)$ at $\mu_0$. By standard properties of the Legendre Transform this means
\begin{equation} \phi = \lim_{N\rightarrow \infty} \phi^{(N)} \label{EqCorr2} \end{equation}
satisfies $d\xi|_\phi = MA(\phi) = \mu_0$. This means $\phi$ is smooth and $\nabla^c \phi$ defines the optimal transport map transporting $\mu_0$ to $\nu_0$. Now, let $\Phi_N$ and $\Phi$ be the images in $P_{\Z^n}(\R^n)$ of $\phi_N$ and $\phi$ respectively. The convergence in \eqref{EqCorr2} implies $\Phi_N\rightarrow \Phi$ and, by standard properties of convex functions, $\nabla\Phi^{(N)}\rightarrow \nabla \Phi$. This means $\nabla^c\phi_N \rightarrow \nabla^c\phi$ which proves the Corollary.
\end{proof}

\end{document}